\documentclass[10pt]{article}    

\usepackage{graphicx}          

\usepackage{amsthm}
\usepackage{amsmath}
\usepackage{amssymb}
\usepackage{amsfonts}
\usepackage{lipsum}
\usepackage{epstopdf}
\usepackage{algorithmic}
\usepackage{hyperref}
\usepackage{color}
\usepackage{mathptmx} 
\newtheorem{lem}{Lemma}
\newtheorem{thm}{Theorem}

\DeclareMathAlphabet{\bit}{OML}{cmm}{b}{it}

\def\<{\leqslant}           
\def\>{\geqslant}           
\def\div{\mathrm{div}}         

\def\d{\partial}

\def\cH{\mathcal{H}}   
\def\mZ{\mathbb{Z}}    
\def\mR{\mathbb{R}}    
\def\mC{\mathbb{C}}    

\def\Tr{\mathrm{Tr}}       
\def\rT{\mathrm{T}}        
\def\rF{\mathrm{F}}        

\def\ad{\mathrm{ad}}       

\def\bE{\mathbf{E}}    

\def\[[[{[\![\![}
\def\]]]{]\!]\!]}

\def\bra{{\langle}}
\def\ket{{\rangle}}

\def\Bra{\Big\langle}
\def\Ket{\Big\rangle}

\def\re{\mathrm{e}}        
\def\rd{\mathrm{d}}        

\def\cL{\mathcal{L}}

\def\bD{\mathbf{D}}

\def\bR{\mathbf{R}}

\def\br{\mathbf{r}}
\def\x{\times}
\def\ox{\otimes}

\def\Argmin{\mathop{\mathrm{Argmin}}}

\def\fS{\mathfrak{S}}

\def\mP{\mathbb{P}}

\def\sH{\mathsf{H}}

\def\bH{\mathbf{H}}

\def\cG{\mathcal{G}}

\def\cA{\mathcal{A}}
\def\cB{\mathcal{B}}

\def\cov{\mathbf{cov}}

\def\cS{\mathcal{S}}
\def\cT{\mathcal{T}}

\def\mS{\mathbb{S}}
\def\mT{\mathbb{T}}
\def\mZ{\mathbb{Z}}

\def\diag{\mathop{\mathrm{diag}}}    

\begin{document}
\title{\vspace{-12mm}\bf\Large Renyi's Relative Entropy Lyapunov Functional for Convergence to Invariant Measure in Dissipative Stochastic Hamiltonian Systems}


\author{Igor G. Vladimirov\footnote{School of Engineering, Australian National University, ACT 2601, Canberra, Australia; 
e-mail: {\small\tt igor.g.vladimirov@gmail.com.}}}
\date{}

\maketitle
\renewcommand{\abstractname}{}
\begin{abstract}
\vspace{-12mm}
\noindent\textbf{Abstract:}  
This paper is concerned with multivariable stochastic Hamiltonian systems governed by an ordinary differential equation for the position and an Ito stochastic differential equation for the momentum. The momentum dynamics involve both conservative and nonconservative forcing including Langevin viscous damping and the Wiener process as a random force. The setting allows for rotational degrees of freedom,  a non-quadratic potential energy function and position-dependent mass, diffusion and damping matrices. In addition to modelling nonlinear physical dynamics subject to random forcing, the system is motivated by a stochastic optimisation viewpoint, where the search for global minima  of the potential energy (for example, using the gradient descent) is carried out through convergence to a Boltzmann equilibrium measure  which favours smaller values of this function. Under certain regularity conditions, we show that the $\chi^2$-divergence or, equivalently, Renyi's second-order relative entropy with respect to the invariant measure has a nonpositive time derivative and thus provides a Lyapunov functional candidate for this convergence. However, this derivative vanishes at some moments of time, which is related to the behaviour of the probability distribution of the momentum  conditioned on the position and has a link to a family of auxiliary quantum harmonic oscillator Hamiltonians parameterised by the position space of the underlying classical system. This position-momentum conditioning is used in order to prove that such pre-equilibrium break times in the entropy dissipation are isolated,  thus making the $\chi^2$-divergence a strictly decreasing Lyapunov functional for the Fokker-Planck-Kolmogorov equation which governs the joint position-momentum distribution in such systems. 

\vspace{3mm}
\noindent\textbf{MSC codes:}  
70H14, 34F05, 35Q84, 82B31, 82C31, 37H30, 82M60, 37J25, 94A17, 82C10, 81P16. 

\vspace{3mm}
\noindent\textbf{Keywords:} 
global optimisation; 
dissipative stochastic Hamiltonian system; 
Fokker-Planck-Kolmogorov equation; 
equilibrium measure; 
$\chi^2$-divergence;
Renyi's relative entropy; 
Lyapunov functional;   
quantum harmonic oscillator. 
\end{abstract}

\section{Introduction}
\label{sec:intro}

A common feature of classical mechanical systems and their electrical and electromechanical counterparts, which are modelled using Lagrangian and Hamiltonian dynamics, is that their  state involves generalised positions and velocities or momenta. This partitioning into two conjugate sets of qualitatively different dynamic variables is reflected in the symplectic structure of Hamiltonian equations of motion  for such systems \cite{A_1989}. It also manifests itself in the energy balance relations, which  describe the exchange between the potential and kinetic components of energy, as well as the energy  dissipation  through  mechanical friction or electrical resistance.  These energy transfer and dissipation relations, which take into account conservative and nonconservative forces across subsystems,    play an important role in control by interconnection for port-Hamiltonian \cite{VJ_2014} and negative imaginary \cite{LP_2008,P_2016} systems. The energy transfer and dissipation provide a mechanism which secures stability in such systems and is captured in Lyapunov and more general storage functions.  The dissipation relations, leading to nonpositive  time derivatives of these functions,   allow the system behaviour to be localised around  its equilibrium.  If the time derivative vanishes at a nonequilibrium state,  but the system does not ``dwell'' there forever, asymptotic  stability can be established by using the  Barbashin-Krasovskii-LaSalle (BKL) invariance principle \cite{BK_1952,L_1960}.  For example, the Hamiltonian, which  quantifies the total energy of the system, provides a natural Lyapunov function   candidate. Assuming the Langevin damping force \cite{Z_2001} proportional to the velocity, the Hamiltonian has zero time derivative (with a ``break'' in the energy dissipation) whenever the velocity vanishes. However, the conservative force from a nonzero potential energy gradient keeps pushing the system, the nonvanishing acceleration makes  the velocity become nonzero (which is accompanied by the potential energy flowing into the kinetic energy), and the total energy dissipation continues. 

This potential-kinetic or position-momentum  interplay is adopted from classical mechanics in the heavy-ball optimisation method \cite{P_1964}. The latter  (see also \cite{UPS_2022,WPUS_2024} for  recent developments on this topic) adds the inertia effect to the gradient descent algorithm, so that its behaviour acquires features of physical system dynamics. The addition of a discrete-time counterpart of the momentum variables allows this iterative  method to get out of traps created by local minima of the potential energy and thus better search the position space for global minima. Another resource for improving the performance in  the context of global optimisation is provided by artificially introduced randomness in the form of a random initial condition or a random noise added to the gradient descent or Hamiltonian dynamics with dissipation. 
In the resulting discrete or continuous time stochastic systems, the role of equilibria is played by invariant probability measures instead of critical points of the potential energy or Hamiltonian functions on the position or augmented position-momentum phase spaces. Despite the more complicated nature of such equilibria, the system parameters can be tuned so as to make the invariant measures favour lower values of the functions subject to minimisation. This stochastic optimisation paradigm is also employed in simulated annealing \cite{KGV_1983} equipped with an absolute temperature parameter which is gradually decreased to zero according to a particular cooling schedule,  thus imitating a heat treatment procedure.   

In  the global optimisation and physical modelling context, of relevance  is a class of multivariable continuous time dissipative stochastic  Hamiltonian (DSH) systems, which involve both conservative  and deterministic damping and random forces. The position of such a system  is governed by an ordinary differential equation (ODE), which relates the velocity  to the momentum through  a generalised mass matrix, while the momentum dynamics have the form  of an Ito  stochastic differential equation (SDE) driven by a standard Wiener process \cite{KS_1991}.  The drift of the momentum SDE consists of a conservative force (coming from the Hamiltonian and involving the gradient of the potential energy) and a Langevin viscous  damping force specified by a damping matrix. The mass,  damping and momentum diffusion matrices can be position-dependent, which reflects the influence of the current spatial configuration of the system (especially in the presence of rotational degrees of freedom, as in the case of pendulum-like mechanical systems \cite{LPS_1996,OPU_2012}) on the tensor of inertia, energy dissipation  rate and exposure to a random environment.  Together with a nonquadratic potential energy function, in general, this results in a nonlinear DSH system (which extends linear DSH systems \cite{VP_2018_ANZCC,VP_2020_ANZCC} with quadratic potentials and constant mass, damping and diffusion matrices).  Such systems, discussed in \cite{Soize_1994},  are applicable to modelling flexible mechanical structures arising, for example,  in molecular dynamics \cite{H_1986,T_2010} and aeroelasticity  \cite{I_2019}  problems (involving the interaction with an external solvent or fluid flow),  as well as  electrical circuits subject to thermal noise \cite{R_1974}.

Under a multivariate version of the Einstein relation \cite{K_1966} between the damping and diffusion matrices,  the DSH system has an invariant measure in the form of a Maxwell-Boltzmann distribution from equilibrium statistical mechanics \cite{ME_1981}. The invariant probability density function (PDF) is specified by a temperature parameter and the Hamiltonian in such a way that it favours points in the phase space with lower energy values, especially as the temperature goes to zero. It is a steady-state solution of the Fokker-Planck-Kolmogorov equation (FPKE) \cite{BKRS_2015,R_1996,S_2008}, which, in application to the DSH system, governs the time evolution of the joint PDF of the position and momentum variables constituting a diffusion process in the phase space. The relevance of the dynamic structure of the DSH system and the Maxwell-Boltzmann invariant PDF in the optimisation and modelling context makes the convergence to the invariant measure in such a system  an important issue.  A Lyapunov functional candidate for this convergence in Markov processes is provided by the Kullback-Leibler   relative entropy \cite{CT_2006} (with respect to the invariant measure) due to its property  of being a nonincreasing function of time.   However, in the DSH setting, the time derivative of the relative entropy can vanish at some moments of time before the equilibrium measure is reached. This  resembles the above mentioned time evolution of the Hamiltonian in the deterministic dissipative case, except that  the breaks in the Kullback-Leibler relative entropy dissipation happen when the momentum distribution, conditioned on the position, coincides with its invariant conditionally Gaussian counterpart (instead of those instants when the velocity vanishes).  This position-momentum conditioning was shown in \cite{V_2023_AHP} to play a central role not only in the entropy dissipation breaks but also for establishing the strict monotonicity of the Kullback-Leibler relative entropy in time despite those breaks. This was achieved through representing the FPKE in terms of two coupled partial differential equations (PDEs) for the marginal position PDF and the conditional momentum PDF (which factorise the joint position-momentum PDF) and decomposing the entropy into the corresponding position and momentum parts, followed by investigating their dynamics (including the entropy exchange between them) under those PDEs. 

An extension of these ideas to different criteria for convergence to equilibrium  is the main purpose of the present paper. More precisely, for a multivariable DSH system satisfying the damping-diffusion relation, we consider the $\chi^2$-divergence \cite{PW_2025} of the position-momentum distribution with  respect to the Maxwell-Boltzmann invariant measure.  This deviation functional coincides, up to an additive constant,  with the exponential of Renyi's second-order relative entropy \cite{R_1961} and admits a decomposition into   a position part (which depends on the marginal position PDF) and a momentum part (which involves the conditional momentum PDF).  Using the position-momentum conditioning, we show that, under certain regularity conditions,   the $\chi^2$-divergence, or equivalently, Renyi's relative entropy, has a nonpositive time derivative and thus provides another Lyapunov functional candidate for the convergence to the invariant measure. 
However, this derivative vanishes at the same break times, as for the above mentioned Kullback-Leibler relative entropy, that is, whenever the conditional momentum PDF coincides with its invariant counterpart everywhere in the phase space. Despite this common feature, Renyi's relative entropy dissipation obeys different equations, which,  in contrast to the Kullback-Leibler relative entropy case of \cite{V_2023_AHP}, reveal a connection with a family of auxiliary quantum harmonic oscillator \cite{S_1994} Hamiltonians parameterised by the position space of the classical DSH system being considered. Furthermore, the nondegeneracy of the ground states for  these quantum mechanical Hamiltonians plays an important role in the structure of the $\chi^2$-divergence dissipation breaks. This connection is similar to yet different from that with the quantum anharmonic oscillator arising in the context of noisy gradient descent control \cite{KMPV_2025}. 

We then compute the second and third-order time derivatives of the $\chi^2$-divergence and its position and momentum components at the break times. These higher-order dissipation relations show that any pre-equilibrium break in the $\chi^2$-divergence dissipation is followed by a decrease in its position component over a sufficiently short interval of time, which is accompanied by an increase in the momentum component in such a way that they behave asymptotically as quadratic parabolas, and the $\chi^2$-divergence behaves like a cubic parabola with a stationary point of inflection at the break time. As a result, the break times are isolated and thus form a countable set, which makes Renyi's relative entropy a strictly decreasing function of pre-equilibrium time. The leading coefficients of the above mentioned quadratic parabolas are equal in absolute value and have opposite signs, as if Renyi's relative entropy flowed from the momentum component of the $\chi^2$-divergence to the position component before the break time and in the opposite direction after it.  This manifestation of the BKL principle in the DSH setting  through the ``waterbed''-like  exchange between the position and momentum components of the $\chi^2$-divergence leads to its strict monotonicity in time and  establishes this deviation functional as a  fully-fledged Lyapunov functional for the FPKE governing the joint position-momentum distribution.

The paper is organised as follows.
Section~\ref{sec:stopt} mentions the potential  energy minimisation on the position space as a motivation for the gradient descent with a random initial condition in Section~\ref{sec:rand} and its noisy version in Section~\ref{sec:noisy} before proceeding to stochastic Hamiltonian position-momentum dynamics in Section~\ref{sec:SHD} with nonconservative deterministic  and random forcing. Section~\ref{sec:varlan}  provides a variational property of Langevin viscous damping in terms of a pointwise optimisation problem of  mean energy decay speedup regularised  by a quadratic penalty on the nonconservative force. Section~\ref{sec:posmom} uses the conditioning of the momentum on the position and the resulting factorisation of the joint position-momentum PDF in order to obtain a PDE  for the evolution of the conditional momentum mean.  Section~\ref{sec:invmeas} provides a damping-diffusion relation which makes the Maxwell-Boltzmann  equilibrium distribution an invariant measure for the system. Section~\ref{sec:chi2} describes the position-momentum decomposition of the $\chi^2$-divergence with respect to the invariant measure and obtains a dissipation relation for it,  which involves a connection with a family of quantum harmonic oscillator Hamiltonians  and proves that its time derivative is nonpositive. The breaks in this dissipation, when the time derivative of the $\chi^2$-divergence vanishes, are investigated in   
Section~\ref{sec:break} along with the local behaviour of its position and momentum components.  These results are used in Section~\ref{sec:strict} for showing that the break times are isolated and the $\chi^2$-divergence is in fact strictly monotonic in time, with the underlying dissipation mechanism being a manifestation of the BKL principle through Renyi's relative entropy exchange between the position and momentum components. Section~\ref{sec:conc} makes concluding remarks. In addition to the
proofs of lemmas and theorems in the main body of the paper, Appendices \ref{sec:Morse}--\ref{sec:ground} provide auxiliary material on common Morse theoretic features   of the Hamiltonian and potential energy functions, integration  by parts under expectation,  and quantum harmonic oscillator Hamiltonians.

\section{Motivation from Global Optimisation}
\label{sec:stopt}

As mentioned in Introduction, one of motivations for the subsequent discussion is provided by a global minimisation problem 
\begin{equation}
\label{Vmin}
  V(q)
  \longrightarrow 
  \inf,
  \qquad
  q \in S, 
\end{equation}
where $V$ is a given $\mR_+$-valued  potential energy function on a position space $S$.  
The latter is assumed to be either  $\mR^n$, the $n$-dimensional  torus $\mT^n$,  or, more generally, the product $\mR^{n-r}\x \mT^r$ of such sets, with $\mT:= \mR / (2\pi \mZ)$   being identified with 
the interval $[0,2\pi)$, whereby functions on $\mT$ can be viewed as $2\pi$-periodic functions on the real line $\mR$ 
(here, $\mZ$ is the set of integers). Accordingly, growth conditions for functions at infinity in $S$ will apply only with respect to  the Euclidean position coordinates and are replaced with periodic boundary conditions over the $\mT$-valued coordinates. 
Such periodicity arises when the argument $q:= (q_k)_{1\< k \< n}$ of the function $V$ involves angular coordinates associated with rotational degrees of freedom as, for example, in the case of pendulum-like mechanical systems \cite{LPS_1996,OPU_2012} or molecular dynamics models \cite{H_1986}, where the spatial configuration is specified by dihedral angles. In each of these cases, the set $S$ is equipped with the usual  $n$-dimensional Lebesgue measure. 

A numerical solution of the optimisation problem (\ref{Vmin}) is provided by deterministic algorithms, such as the 
discrete-time Newton and gradient descent iterations. A continuous-time  counterpart of the latter is given by 
\begin{equation}
\label{Vgrad}
  \dot{Q}_t = a(Q_t),
  \qquad
  a:= -\nabla V 
\end{equation}
(with $\dot{(\ )}:= \frac{\rd }{\rd t}$ the time derivative) and produces 
a function $Q:= (Q_t)_{t\> 0} \in C^1(\mR_+, S)$ of time $t\> 0$. Here, $V$  is assumed to be twice continuously differentiable, $V \in C^2(S, \mR_+)$, and its gradient vector field $\nabla V:= \d_q V = (\d_{q_k}V)_{1\< k \< n}: S \to \mR^n$ over the position variables is assumed to be Lipschitz in order to guarantee the existence and uniqueness of solutions for the ODE  (\ref{Vgrad}) without a finite-time blowup. The Lipschitz property of $\nabla V$ is guaranteed, for example, by a sufficient condition 
\begin{equation}
\label{V''sup}
    \mu:= 
    \sup_{q \in S} \br(V''(q)) < +\infty, 
\end{equation}
where $V'':= \d_q^2 V=(\d_{q_j}\d_{q_k} V)_{1\< j,k\< n}$ is the Hessian matrix of $V$, and $\br(\cdot)$ is the spectral radius of a matrix (which coincides with the matrix operator norm for real symmetric matrices).

The potential energy function $V$ provides a natural Lyapunov function candidate for the ODE (\ref{Vgrad}) due to the well-known dissipation relation
\begin{equation}
\label{Vdot}
    \frac{\rd }{\rd t}V(Q_t)
    = 
    a(Q_t)^\rT \nabla V(Q_t) 
    = - |\nabla V(Q_t)|^2 
\end{equation}
(with $|\cdot|$ the standard  Euclidean norm), whose right-hand side is always non-positive and vanishes only at critical points of $V$ (where $\nabla V =0$). Such points are equilibria of the dynamical system (\ref{Vgrad}),  and their local stability is determined completely by the Hessian matrix $V''$,  provided $V$ is a Morse function \cite{M_1963}. That is, in addition to the above assumption that $V\in C^2(S, \mR_+)$  (smoothness in the form of infinite  differentiability is not needed here), $\det V'' \ne 0$ whenever  $\nabla V=0$. These conditions guarantee that all the critical points of $V$ are isolated and thus form a countable set in $S$.

Despite the deterministic nature of the dynamics (\ref{Vgrad}), the function of time $Q$ can also be regarded as a degenerate diffusion process, whose infinitesimal generator acts on functions $\varphi \in C^1(S,\mR)$ as 
\begin{equation}
\label{cG}
  \cG(\varphi)
  := 
  a^\rT 
  \nabla \varphi
  =
  -\nabla V^\rT  \nabla \varphi
\end{equation}
and is applied  in (\ref{Vdot}) to $V$. 
The formal adjoint of the operator $\cG$  with respect to the inner product $\bra u, v\ket := \int_S u(q)v(q)\rd q$ in the Hilbert space $L^2(S,\mR)$ (or, more generally, for functions $u, v: S \to \mR$ with an integrable pointwise product $uv \in L^1(S,\mR)$)  is given by 
\begin{equation}
\label{cG+}
  \cG^\dagger(\varphi)
  := 
  -\div(\varphi a) = -\cG(\varphi) - \varphi \div a .  
\end{equation}
Here, $\div(\cdot)$ is the divergence operator, and hence, in view of (\ref{Vgrad}), 
\begin{equation}
\label{diva}
    \div a = -\Delta V, 
\end{equation}
where $\Delta(\cdot):= \sum_{k=1}^n \d_{q_k}^2(\cdot)$ is the Laplacian, so that $\Delta V = \Tr V''$.

\section{Random Initial Condition}
\label{sec:rand}

If the initial condition $Q_0$ is allowed to be varied in order for the gradient descent flow (\ref{Vgrad})  to better explore the position space $S$ in searching for global minima of $V$,   the process $Q$  can be made random by initialising  the deterministic ODE at an absolutely continuously distributed  random vector $Q_0$ on $S$ with a PDF $f_0: S\to \mR_+$.  Then,  at any time $t\> 0$, the random vector 
\begin{equation}
\label{QtPhi}
    Q_t = \Phi_t(Q_0)
\end{equation}
has a PDF $f_t: S \to \mR_+$ which  is related to its initial condition $f_0$ as
\begin{equation}
\label{ftgrad}
  f_t(q)
  =
  f_0(\Phi_t^{-1}(q))\re^{-L_t(\Phi_t^{-1}(q))}, 
  \qquad
  q \in S,  
\end{equation}
where
\begin{equation}
\label{Lt}
  L_t(q):= \ln \det \Phi_t'(q). 
\end{equation}
Here, $(\Phi_t)_{t\> 0}$ is the semi-group of invertible maps on $S$ satisfying $\Phi_t, \Phi_t^{-1} \in C^1(S, S)$ and generated by the vector field $a$ in (\ref{Vgrad}) as 
\begin{equation}
\label{Phidot}
    \d_t \Phi_t(q) = a(\Phi_t(q)),
    \qquad
    \Phi_0(q) = q.   
\end{equation}
Also, $\Phi_t':= \d_q\Phi_t: S\to \mR^{n\x n}$ is the Jacobian matrix of $\Phi_t$, so that $\Phi_0' = I_n$ is the identity matrix of order $n$. Accordingly, in view of (\ref{Phidot}), the function $L_t$ in (\ref{Lt})  satisfies the initial value problem
\begin{align}
\nonumber
    \d_t L_t 
    & =
    \Tr ((\Phi_t')^{-1}\d_t \Phi_t')
    =
    \Tr (a'\circ \Phi_t)\\
\label{Ldot}
    & =
    (\div a)\circ \Phi_t
    =
     -(\Delta V)\circ \Phi_t,
     \qquad
     L_0 = 0,  
\end{align}
where use is also made of (\ref{diva}). 
If the initial PDF $f_0$ is continuously differentiable, $f_0 \in C^1(S,\mR_+)$, then $f_t$ is continuously  differentiable in time $t\> 0$  and space 
and satisfies the first-order PDE 
\begin{equation}
\label{fdotgrad}
  \d_t f_t = \cG^\dagger(f_t) = -\div U_t, 
  \qquad
  U_t := f_t a, 
\end{equation}
which is a reduced version of the FPKE,  
where the operator $\cG^\dagger$ from (\ref{cG+}) acts over the spatial argument of $f_t$. Here, the time-varying vector field $U_t: S \to \mR^n$ is the probability flux associated with the right-hand side of (\ref{Vgrad}) and the PDF $f_t$, so that (\ref{fdotgrad}) is equivalent to the continuity equation $\d_t f_t + \div U_t = 0$.  
This allows the energy dissipation (\ref{Vdot}) to be looked at from a dual viewpoint which is concerned with ``shrinkage'' of the PDF $f_t$ in (\ref{ftgrad}) for the random vector $Q_t$ in (\ref{QtPhi}) over the course of time in terms of its differential  entropy \cite{CT_2006}
\begin{align}
\nonumber
    \bH(f_t) 
    & := 
    -\int_S 
    f_t(q)\ln f_t(q)\rd q
    =
    -\bra f_t, \ln f_t\ket  \\
\nonumber
    & = 
    - \bE \ln f_t(Q_t) 
    = 
    -\bE \ln (f_0(Q_0) \re^{-L_t(Q_0)})
    \\
\label{bH}
    & = 
    \bH(f_0) + \bE L_t(Q_0) 
\end{align}
(with the usual convention that $0\ln 0 = 0$ by continuity),  
where $\bE(\cdot)$ is the expectation. The quantity (\ref{bH}) provides an upper bound for Renyi's entropy \cite{R_1961}  of order $\alpha>1$ given by 
\begin{equation}
\label{bHalf}
    \bH_\alpha(f_t) 
    := 
    \frac{1}{1-\alpha}
    \ln \bE (f_t(Q_t)^{\alpha-1})
    \< \bH(f_t)  
\end{equation}
and is its limiting case,  
so that $\bH(f) = \lim_{\alpha\to 1} \bH_\alpha(f)$ for any PDF $f: S \to \mR_+$ satisfying suitable regularity conditions.   Note that 
\begin{equation}
\label{falf}
    \bE (f_t(Q_t)^{\alpha-1}) = 
    \int_S 
    f_t(q)^\alpha 
    \rd q
    =
    \bra f_t^\alpha, 1\ket
\end{equation}
in (\ref{bHalf}) coincides with the $\alpha$th power of the $L^\alpha$-norm of the PDF $f_t$. 

\begin{lem}
\label{lem:shrink}
Suppose the potential energy  function $V \in C^2(S,\mR_+)$ 
satisfies (\ref{V''sup}),  
and the random initial condition $Q_0$ of the gradient descent flow (\ref{Vgrad})  has a PDF $f_0 \in L^\alpha(S,\mR_+)$, with $\alpha>1$. Then  
Renyi's entropy (\ref{bHalf}) for the PDF $f_t$ of $Q_t$ evolves in time $t\> 0$ as 
\begin{equation}
\label{bHalfdot}
  \frac{\rd}{\rd t}
  \bH_\alpha(f_t)
  =
  -\re^{(\alpha-1)\bH_\alpha(f_t)}
    \bE 
    (f_t(Q_t)^{\alpha-1} 
    \Delta V(Q_t)).  
\end{equation}
\end{lem}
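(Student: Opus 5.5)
The plan is to work with $J_t := \bra f_t^\alpha, 1\ket = \bE(f_t(Q_t)^{\alpha-1})$ from (\ref{falf}) and to use the explicit flow representation (\ref{ftgrad}) instead of differentiating the PDE (\ref{fdotgrad}) under the integral sign. This avoids any need for differentiability of $f_0$. By the change of variables $q = \Phi_t(p)$, whose Jacobian is $\re^{L_t(p)}$ by (\ref{Lt}), we get
\begin{equation*}
  J_t = \int_S f_0(p)^\alpha \re^{-\alpha L_t(p)} \re^{L_t(p)} \rd p
  = \int_S f_0(p)^\alpha \re^{(1-\alpha) L_t(p)} \rd p .
\end{equation*}
This is a fixed integrable weight $f_0^\alpha$, since $f_0\in L^\alpha$, times a time-dependent factor. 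By (\ref{Ldot}), $\d_t \re^{(1-\alpha)L_t} = (\alpha-1)\re^{(1-\alpha)L_t}(\Delta V)\circ\Phi_t$.

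The main technical step is to justify differentiation under the integral sign. For this we use (\ref{V''sup}): $|\Delta V| \< n\mu$ everywhere, so by (\ref{Ldot}) we have $|L_t(p)| \< n\mu t$ uniformly in $p$. On any bounded time interval, the integrand and its time derivative are therefore dominated by $C f_0^\alpha$, with $C$ depending only on $n,\mu,\alpha$ and the interval length. Dominated convergence then gives
\begin{equation*}
  \dot J_t = (\alpha-1)\int_S f_0(p)^\alpha \re^{(1-\alpha)L_t(p)} \Delta V(\Phi_t(p))\rd p
  = (\alpha-1)\int_S f_t(q)^\alpha \Delta V(q)\rd q,
\end{equation*}
where the last equality reverses the change of variables. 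The right-hand side equals $(\alpha-1)\bE(f_t(Q_t)^{\alpha-1}\Delta V(Q_t))$. The same bounds show $0<J_t<\infty$ whenever $f_0\not\equiv 0$, which holds because $f_0$ is a PDF. The Lipschitz property of $\nabla V$ also ensures that $\Phi_t$ is a global $C^1$ diffeomorphism, so the change of variables is legitimate.

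Finally, from (\ref{bHalf}), $\bH_\alpha(f_t) = \frac{1}{1-\alpha}\ln J_t$. Hence
\begin{equation*}
  \frac{\rd}{\rd t}\bH_\alpha(f_t) = \frac{\dot J_t}{(1-\alpha)J_t} = -\frac{1}{J_t}\bE(f_t(Q_t)^{\alpha-1}\Delta V(Q_t)),
\end{equation*}
and $1/J_t = \re^{(\alpha-1)\bH_\alpha(f_t)}$ by the same relation, which yields (\ref{bHalfdot}). The only delicate point is the domination argument above. Everything else is bookkeeping, including the observation that (\ref{Ldot}) holds pointwise with a continuous right-hand side, so $L_t(p)$ is $C^1$ in $t$.
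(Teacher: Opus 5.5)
Your proposal is correct and follows the paper's proof essentially step for step. The paper also rewrites $\bra f_t^\alpha,1\ket$ as $\bra f_0^\alpha, \re^{(1-\alpha)L_t}\ket$ via (\ref{ftgrad}), and uses $|\Delta V|\< n\mu$ from (\ref{V''sup}) to bound $\re^{(1-\alpha)L_t}\< \re^{(\alpha-1)n\mu t}$ and justify differentiation by dominated convergence. It then pushes the result forward to $\bra f_t^\alpha,\Delta V\ket$ and concludes via the logarithm in (\ref{bHalf}). Your two-sided bound $|L_t|\< n\mu t$ additionally makes the positivity and finiteness of $J_t$ explicit, which the paper leaves implicit.
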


\begin{proof}
Differentiation of (\ref{bHalf}) with respect to time leads to 
\begin{equation}
\label{bHdot1}
  \frac{\rd}{\rd t}
  \bH_\alpha(f_t)    
  = 
  \frac{1}{1-\alpha} 
  \re^{(\alpha-1)\bH_\alpha(f_t)}
  \frac{\rd}{\rd t}
  \bra
    f_t^\alpha,
    1
  \ket.  
\end{equation}
Here, the rightmost time derivative can be computed by representing (\ref{falf}) as
\begin{align}
\nonumber
    \bra f_t^\alpha, 1\ket 
    & = 
    \bE (f_0(Q_0)^{\alpha-1}\re^{(1-\alpha) L_t(Q_0)})\\
\label{falf1}
     & = 
     \bra 
        f_0^{\alpha}, 
        \re^{(1-\alpha) L_t}
     \ket, 
\end{align}
which employs (\ref{QtPhi}), (\ref{ftgrad}) similarly to (\ref{bH}).  Since $|\Delta V| = |\Tr V''| \< n \mu$ due to (\ref{V''sup}), then 
\begin{equation}
\label{expup}
    \re^{(1-\alpha) L_t(q)} \< \re^{(\alpha-1)n\mu t},
    \qquad
    t\> 0,\
    q \in S
\end{equation}
in view of (\ref{Ldot}). In particular, (\ref{falf1}), (\ref{expup}) imply that the $L^\alpha$-norm of the PDF  $f_t$ remains finite over the course of time, inheriting this property from $f_0 \in L^\alpha(S,\mR_+)$. By a dominated convergence argument, a combination of (\ref{falf1}) with (\ref{Ldot}) yields
\begin{align}
\nonumber
    \frac{1}{\alpha-1}
  \frac{\rd}{\rd t}
  \bra
    f_t^\alpha,
    1
  \ket
  & = 
    -
     \bra 
        f_0^{\alpha}, 
        \re^{(1-\alpha) L_t}
        \d_t L_t
     \ket   \\
\nonumber
  & = 
     \bra 
        f_0^{\alpha}, 
        \re^{(1-\alpha) L_t}
        (\Delta V) \circ \Phi_t
     \ket   \\
\nonumber
  & = 
     \bra 
        f_0^{\alpha}\re^{(1-\alpha) L_t}, 
        (\Delta V) \circ \Phi_t
     \ket   \\
\nonumber
  & = 
     \bra 
        (f_0\re^{-L_t})^\alpha\circ \Phi_t^{-1} , 
        \Delta V
     \ket        \\
\nonumber
  & = 
     \bra 
        f_t^\alpha, 
        \Delta V
     \ket  \\
\label{braket}
    & =
    \bE 
    (f_t(Q_t)^{\alpha-1} 
    \Delta V(Q_t)), 
\end{align}
where (\ref{QtPhi}), (\ref{ftgrad}) are used again. Substitution of (\ref{braket}) into (\ref{bHdot1}) yields (\ref{bHalfdot}). 
\end{proof} 

Note that in the case of continuously differentiable PDFs, the relation (\ref{braket}) can also be obtained by using the PDE (\ref{fdotgrad}) along with the generator $\cG$ and its adjoint $\cG^\dagger$ from (\ref{cG}), (\ref{cG+}) 
and the integration by parts based on the identity $\alpha f^{\alpha-1} \div(f\nabla V) = (\alpha-1) f^\alpha \Delta V + \div(f^\alpha \nabla V)$ for any  function $f\in C^1(S,\mR_+)$.

In the limiting case, as $\alpha\to 1$, the entropy dissipation relation (\ref{bHalfdot}) pertains to the  differential entropy (\ref{bH}) and acquires the form 
\begin{equation}
\label{bHdot}
  \frac{\rd}{\rd t}
  \bH(f_t)
  =
  -
    \bE 
    \Delta V(Q_t) . 
\end{equation}
The relations (\ref{bHalfdot}), (\ref{bHdot}) show that in the Euclidean case of $S = \mR^n$, if the potential energy function $V$ is subharmonic \cite{D_1984}, that is, $\Delta V \> 0$ everywhere in $S$ (in particular, if $V$ is convex), then both Renyi's entropy $\bH_\alpha(f_t)$ and the differential entropy $\bH(f_t)$ are nonincreasing functions of time $t\> 0$. Moreover, if the subharmonicity admits a lower bound in the sense that 
\begin{equation}
\label{sub}
    \inf_{q \in S} \Delta V(q) \> \sigma >0
\end{equation}
then, by applying the Gronwall-Bellman lemma to (\ref{braket}), it follows from     (\ref{bHalf}) that 
$$
    \bH_\alpha(f_t) 
    \< 
    \frac{1}{1-\alpha}
    \ln 
    (
    \bra f_0^\alpha, 1\ket \re^{\sigma (\alpha-1)t}
    )
    =    
    \bH_\alpha(f_0) - \sigma t.  
$$
Therefore, a similar inequality holds for the differential entropy (\ref{bH}) as well: 
$$
    \bH(f_t) 
    \< 
    \bH(f_0) - \sigma t ,
    \qquad
    t \> 0.   
$$
For example, if $V$ is strongly convex,   (\ref{sub}) is valid with  
$\sigma:= n\inf_{q \in S} \lambda_{\min}(V''(q))>0$, where $\lambda_{\min}(\cdot)$ is the smallest eigenvalue.   
However, the entropy decay to $-\infty$ does not give any information about the points of concentration of the probability distribution of $Q_t$ in $S$, as $t \to +\infty$, because Renyi's entropy (\ref{bHalf}) and its limiting case (the differential entropy (\ref{bH})) use the Lebesgue measure as a reference measure which is unrelated to the function $V$. Most importantly, multi-extremum  functions $V$, for which optimisation problems are particularly challenging, are not convex or subharmonic.

\section{Noisy Gradient Descent}
\label{sec:noisy}

Stochastic counterparts of deterministic optimisation algorithms include the noisy gradient descent and simulated annealing \cite{KGV_1983}. These  stochastic optimisation approaches replace the search for minima with a stochastic system evolving towards an invariant measure. Such a measure is usually organised as Boltzmann's  thermodynamic equilibrium distribution \cite{LL_1969,ME_1981}  in order to favour lower values of $V$ as specified by an absolute temperature parameter. In turn, the latter can be gradually decreased to zero according to a cooling schedule, which imitates the underlying physical process.

For example, one of continuous-time stochastic optimisation procedures for solving the problem (\ref{Vmin})  is provided by an Ito SDE 
\begin{equation}
\label{dQ}
  \rd Q_t 
  = 
  -\nabla V(Q_t)
  \rd t + \sqrt{2\cT} \rd W_t, 
  \qquad
  t \> 0, 
\end{equation}
as a noisy version of (\ref{Vgrad}), with a temperature parameter $\cT>0$. It produces an $S$-valued  diffusion process $Q$,  which is driven by a standard Wiener process $W:= (W_t)_{t \> 0}$ in $\mR^n$ independent  of the initial condition $Q_0$. Under the condition of finiteness of the statistical mechanical partition function \cite{ME_1981}
\begin{equation}
\label{Zbet}
    Z(\beta)
    := \int_S \re^{-\beta V(q)}
    \rd q
\end{equation}
for all sufficiently large values of the 
inverse temperature parameter
\begin{equation}
\label{betau}
    \beta := \frac{1}{\cT},   
\end{equation}
that is, for $\cT>0$ small enough,  
the process $Q$ in (\ref{dQ}) has an invariant Boltzmann PDF 
\begin{equation}
\label{f*grad}
    f_*(q)
    =
    \frac{1}{Z(\beta)}
    \re^{-\beta V(q)},
    \qquad
    q \in S  
\end{equation}
(see, for example, \cite{KMPV_2025} and references therein). This PDF is a normalised steady-state solution of the  FPKE in the sense that  
$
    \cL^\dagger(f_*) = 0 
$, 
with 
\begin{equation}
\label{cLgrad+}
    \cL^\dagger(\varphi) = \div(\varphi \nabla V) + \cT \Delta \varphi  
\end{equation}
the formal $L^2(S, \mR)$-adjoint of the infinitesimal generator $\cL$  of the process $Q$ acting as 
\begin{equation}
\label{cLgrad}
    \cL(\varphi) = \cT \Delta \varphi -\nabla V ^\rT \nabla \varphi  
\end{equation}
on functions $\varphi \in C^2(S,\mR)$.    
The solution of the ODE (\ref{Vgrad}), as a degenerate diffusion process, is  obtained by letting $\cT=0$ in (\ref{dQ}), in which case $\cL$, $\cL^\dagger$  in   (\ref{cLgrad}), (\ref{cLgrad+}) become the first-order differential operators $\cG$, $\cG^\dagger$ in (\ref{cG}), (\ref{cG+}), respectively.  The limiting gradient descent process $Q$ is a deterministic function of time and the initial condition $Q_0$, so that the latter provides the only source of randomness in that case.   However, the PDF $f_0$ of $Q_0$ as an $S$-valued random vector  can be such that it assigns a positive probability to the basins of attraction by local, rather than global, minima of $V$ for the gradient descent flow  (\ref{Vgrad}). The additional randomness,  which is brought about by the driving noise $W$ in the system dynamics (\ref{dQ}) with $\cT >0$,  promotes the ability of the diffusion process $Q$ to get out of such traps.  This can be seen from the relation 
\begin{align}
\nonumber
    \frac{\rd }{\rd t}
    \bE V(Q_t)
    & = 
    \bE \cL(V)(Q_t)\\
\label{EVdot}
    & =
    \bE (\cT \Delta V(Q_t) - |\nabla V(Q_t)|^2)
\end{align}
for the mean potential decay rate which employs (\ref{cLgrad}). Indeed, in sufficiently small neighbourhoods of strong local minima of $V$, its Hessian matrix satisfies $V''\succ 0$ and hence, $\Delta V >0$, so that the Ito correction  term $\cT \Delta V$ on such subsets of $S$ makes a positive contribution to the right-hand side of (\ref{EVdot}).

\section{Stochastic Hamiltonian Dynamics}
\label{sec:SHD}

Yet another resource for traversing  the neighbourhoods of local minima of the potential $V$  (similar to the inertia effect adopted from classical mechanics in the heavy-ball optimisation method \cite{P_1964,UPS_2022,WPUS_2024}) is provided by using an additional $\mR^n$-valued momentum process $P:= (P_t)_{t\> 0}$.   It gives rise to an augmented $S\x \mR^n$-valued diffusion process $X:= (X_t)_{t\> 0}$ given by 
\begin{equation}
\label{XQP}
  X_t
  := 
  \begin{bmatrix}
    Q_t\\
    P_t 
  \end{bmatrix}. 
\end{equation}
The latter is the state of a stochastic Hamiltonian system which models physical dynamics subject to a nonconservative deterministic force, specified by a function $u_t \in C^1(S \x \mR^n, \mR^n)$,   and a random force (from the standard Wiener process $W$ in $\mR^n$ mentioned before) governed by an ODE and an Ito SDE: 
\begin{align}
\label{Qdot}
    \dot{Q}_t 
    =& \d_p H(X_t) = M(Q_t)^{-1} P_t, \\
\label{dP}  
    \rd P_t
    = &  
    (-\d_q H(X_t) + u_t(X_t))\rd t
    + \sqrt{G(Q_t)} \rd W_t.  
\end{align}
The position process $Q:= (Q_t)_{t\> 0}$ has continuously differentiable trajectories, while $P$ is an Ito process   \cite{KS_1991} with a diffusion matrix $G:= (G_{jk})_{1\< j,k\< n}: S\to \mP_n$,  
where $\mP_n$ is the set of positive definite matrices in the space $\mS_n$ of real symmetric matrices of order $n$.  Here, $H: S \x \mR^n \to \mR_+$ is the system Hamiltonian which quantifies the total energy consisting of the potential energy 
$V$
and the kinetic energy $T:  S \x \mR^n \to \mR_+$,  so that  
\begin{equation}
\label{HVT}
    H(x) := V(q) + T(x),
    \qquad
    q \in S,\
    p \in \mR^n, 
\end{equation}
where $x \in S \x \mR^n$ is the augmented position-momentum vector  
\begin{equation}
\label{xqp}
    x
    :=
    (x_k)_{1\< k \< 2n}
    :=
    \begin{bmatrix}
      q\\
      p
    \end{bmatrix}. 
\end{equation}
The kinetic energy function is a position-dependent positive definite quadratic form 
\begin{equation}
 \label{T}
    T(x)
    :=
    \frac{1}{2} \|p\|_{M(q)^{-1}}^2
    =
    \frac{1}{2}
    \|\dot{q}\|_{M(q)}^2
\end{equation}
in the momentum variables  or the corresponding velocity vector 
\begin{equation}
\label{qdot}
  \dot{q}:= M(q)^{-1} p, 
\end{equation}
specified by a generalised mass matrix $M: S \to \mP_n$,  
where use is also made of  a weighted Euclidean norm  $\|v\|_N := \sqrt{v^\rT N v}$ of a real vector $v$ associated with an appropriately  dimensioned matrix $N\succ 0$. For what follows, the potential energy and the mass and diffusion matrices are assumed to be infinitely differentiable:
\begin{equation}
\label{VMGsmooth}
  V \in C^\infty(S,\mR_+),
  \qquad
  M, G \in C^\infty(S,\mP_n).   
\end{equation}
In the above mentioned case of rotational degrees of freedom,  where $\dot{q}_1, \ldots, \dot{q}_n$ are angular velocities, the matrix $M$ consists of the components of the tensor of inertia. In the general setting,  the gradients 
\begin{align}
\nonumber
    \d_q H
     & =
    \nabla V(q)
    -
    \frac{1}{2}
    (
        p ^\rT M_k(q) p
    )_{1\< k\< n}    \\
\label{dHdq}
    & = 
    \nabla V(q)
    -
    \frac{1}{2}
    (
        \dot{q}^\rT \d_{q_k}M(q) \dot{q}
    )_{1\< k\< n},\\
\label{dHdp}    
    \d_p H
    & = \dot{q}
\end{align}
of the Hamiltonian (\ref{HVT}) over the position and momentum variables (expressed in terms of (\ref{qdot})) form its full gradient 
\begin{equation}
\label{H'}
    H'
    : =
    \d_x H
    =
    \begin{bmatrix}
        \d_q H \\
        \d_p H
    \end{bmatrix} 
\end{equation}
with respect to all of the variables from (\ref{xqp}) in the phase space $S \x \mR^n$. The rightmost ``centrifugal'' term in (\ref{dHdq}) employs auxiliary functions $M_1, \ldots, M_n\in C^\infty(S, \mS_n)$,  which are  defined  by 
\begin{align}
\nonumber
    M_k(q)
    & := 
    -\d_{q_k} (M(q)^{-1}) \\
\label{Mk}
    & = 
    M(q)^{-1} 
    \d_{q_k}M(q) 
    M(q)^{-1}  
\end{align}
and combined with (\ref{qdot}),  so that the centrifugal term vanishes in the case of a constant mass matrix $M$. 
The equations (\ref{Qdot}), (\ref{dP}) can be assembled into one SDE for the state process (\ref{XQP}):
\begin{equation}
\label{dX}
    \rd X_t
     =
    a_t(X_t)
        \rd  t
        +
        b(Q_t)
        \rd W_t. 
\end{equation}
Its drift and diffusion terms are specified by maps $a_t: S\x \mR^n \to \mR^{2n}$ and $b: S \to \mR^{2n\x n}$ as
\begin{align}
\label{ax}
    a_t(x)
    & := 
        J H'(x)
        +
        \begin{bmatrix}
          0 \\
          1
        \end{bmatrix}
        \ox u_t(x),\\
\label{bq}
    b(q)
     & := 
        \begin{bmatrix}
          0 \\
          1
        \end{bmatrix} 
        \ox \sqrt{G(q)}
\end{align}
using (\ref{xqp}), (\ref{H'}) along with the symplectic structure matrix
\begin{equation}
\label{J}
    J:=
    \begin{bmatrix}
        0 & 1 \\
        -1 & 0
    \end{bmatrix}
    \ox I_n,  
\end{equation} 
where $\ox$ is the Kronecker product. In view of (\ref{VMGsmooth}), (\ref{bq}), the diffusion matrix $D \in C^\infty(S, \mS_{2n}^+)$ for the SDE (\ref{dX}) with values in the set $\mS_{2n}^+$ of real positive semi-definite symmetric matrices of order $2n$  takes the form 
\begin{equation}
\label{Dbb}
  D(q) = b(q)b(q)^\rT 
  = 
  \begin{bmatrix}
    0 & 0 \\
    0 & 1
  \end{bmatrix}
  \ox G(q), 
  \qquad
  q \in S,   
\end{equation} 
and is singular. Now, by (\ref{T}), the minimisation of the Hamiltonian (\ref{HVT}) is 
equivalent to that of the potential $V$ in (\ref{Vmin}):
\begin{align}
\label{HVmin}
    \inf_{x \in S\x \mR^n} H(x) & = \inf_{q \in S} V(q),\\
\label{HVargmin}
    \Argmin_{x \in S\x \mR^n} H(x) & = \Argmin_{q \in S} V(q) \x \{0\},  
\end{align}
see also Appendix~\ref{sec:Morse}.   In view of this equivalence, the deterministic function $u_t$, which plays the role of a control force in the drift of the SDE (\ref{dP}) (or (\ref{dX})),   can be chosen so as to accelerate the mean energy decay quantified by $\frac{\rd }{\rd t}\bE H(X_t)$.

\section{Variational Property of Langevin Damping}
\label{sec:varlan}

Consider a regularised version of the mean energy decay speedup using 
a weighted mean-square penalty on the nonconservative force $u_t$ in the stochastic Hamiltonian system (\ref{Qdot}), (\ref{dP}):
\begin{equation}
\label{EHdotmin}
    \frac{\rd }{\rd t}\bE H(X_t) + \frac{1}{2} \bE (\|u_t(X_t)\|_{F(Q_t)^{-1}}^2)
    \longrightarrow
    \inf, 
\end{equation}
where 
\begin{equation}
\label{Fsmooth}
    F\in C^\infty(S, \mP_n)
\end{equation}
is a given function of the position variables which specifies the force penalty, and the $\frac{1}{2}$-factor is introduced for convenience. The solution for the pointwise optimal control problem (\ref{EHdotmin}) is provided by Lemma~\ref{lem:uopt} below. Its formulation employs  
the infinitesimal generator $\cL_t$ of the diffusion process $X$ in (\ref{dX}),  which acts on a function $\varphi \in C^2(S\x \mR^n, \mR)$ as
\begin{align}
\nonumber
  \cL_t(\varphi)
\nonumber   
    &    =   
    a_t^\rT \varphi' + \frac{1}{2} \bra D, \varphi''\ket_\rF \\
\nonumber
    & = 
    \Big(
    -H'^\rT J
    + \begin{bmatrix}
      0 & u_t^\rT
    \end{bmatrix}
    \Big)
    \varphi'
    +
    \frac{1}{2}
    \bra
        G,
        \d_p^2 \varphi
    \ket_\rF \\    
\label{cL}
    & =
    \{\varphi,H\}
    + u_t^\rT\d_p\varphi
    +
    \frac{1}{2}
    \bra
        G,
        \d_p^2 \varphi
    \ket_\rF    
\end{align}
and inherits dependence on time from the force $u_t$ in (\ref{ax}).  
Here, the Frobenius inner product $\bra K, N\ket_\rF := \Tr (K^\rT N)$ of real matrices \cite{HJ_2007} is used along with (\ref{Dbb}) and  the Hessian matrix
\begin{equation*}
\label{Hess}
    \varphi''
    :=
    \d_x^2\varphi
    =
    \begin{bmatrix}
        \d_q^2 \varphi & \d_p\d_q \varphi  \\
        \d_q\d_p \varphi & \d_p^2 \varphi
    \end{bmatrix}
\end{equation*}
with respect to the position and momentum variables  from (\ref{xqp}). In (\ref{cL}), use is also made    of the 
Poisson bracket  \cite{A_1989}
\begin{align}
\nonumber
    \{\varphi, \psi\}
     & :=
    \varphi'^{\rT} J \psi'\\
\label{Poiss}
    & =
    \d_q \varphi^{\rT}\d_p \psi - \d_p \varphi^{\rT}\d_q \psi
    =
    -\{\psi,\varphi\}
\end{align}
for functions $\varphi, \psi \in C^1(S \x \mR^n, \mR)$,    where $J$ is the antisymmetric  matrix from (\ref{J}). 

\begin{lem}
\label{lem:uopt}
At any time $t \> 0$,   the optimal force in (\ref{EHdotmin}) is delivered by the time-invariant law
\begin{equation}
\label{uopt}
  u_t(x)
  = 
  - F(q) \dot{q},
  \qquad
  q \in S, 
\end{equation}
with $x \in S \x \mR^n$ and $\dot{q}\in \mR^n$ the position-momentum and velocity vectors from (\ref{xqp}), (\ref{qdot}). 
\end{lem}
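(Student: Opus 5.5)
The plan is to compute $\frac{\rd}{\rd t}\bE H(X_t)$ via the generator (\ref{cL}), and to notice that the cost in (\ref{EHdotmin}) is then the expectation of a pointwise strictly convex quadratic function of the value $u_t(x)$. First I will apply (\ref{cL}) to $\varphi = H$. Since $\{H,H\}=0$ by the antisymmetry (\ref{Poiss}), since $\d_p H = \dot{q}$ by (\ref{dHdp}), and since $\d_p^2 H = M(q)^{-1}$ by (\ref{T}), we get
\begin{equation*}
    \cL_t(H)(x) = u_t(x)^\rT \dot{q} + \frac{1}{2}\bra G(q), M(q)^{-1}\ket_\rF .
\end{equation*}
Hence, by Dynkin's formula, $\frac{\rd}{\rd t}\bE H(X_t) = \bE \cL_t(H)(X_t)$. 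I will assume the integrability conditions needed for this, for example that $H$ and $u_t$ have suitable moment bounds.

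Next, the functional in (\ref{EHdotmin}) becomes $\bE\, \psi(X_t, u_t(X_t))$, where
\begin{equation*}
    \psi(x,v) := v^\rT \dot{q} + \frac{1}{2}\|v\|_{F(q)^{-1}}^2 + \frac{1}{2}\bra G(q), M(q)^{-1}\ket_\rF .
\end{equation*}
The last term does not depend on $u_t$. The expectation is taken with respect to the distribution of $X_t$, and that distribution is fixed at time $t$ because it depends only on the past forcing. Therefore it suffices to minimise $\psi(x,\cdot)$ pointwise for every $x\in S\x\mR^n$: any $u_t$ that attains the pointwise minimum everywhere also minimises the expectation.

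For each fixed $x$, the function $v\mapsto \psi(x,v)$ is strictly convex, because $F(q)^{-1}\succ 0$ by (\ref{Fsmooth}). Completing the square gives
\begin{equation*}
    v^\rT \dot{q} + \frac{1}{2}\|v\|_{F^{-1}}^2 = \frac{1}{2}\|v + F\dot{q}\|_{F^{-1}}^2 - \frac{1}{2}\|\dot{q}\|_F^2 .
\end{equation*}
So the unique minimiser is $v = -F(q)\dot{q}$, with minimum value $-\frac{1}{2}\|\dot{q}\|_F^2$. This yields (\ref{uopt}). The resulting law is smooth in $x$ by (\ref{VMGsmooth}) and (\ref{Fsmooth}), and it is independent of $t$.

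The main obstacle is the justification of the pointwise reduction. It requires that the distribution of $X_t$ is not affected by the choice of $u_t$ at the same instant $t$. This holds because the law of $X_t$ is determined by $u_s$ for $s<t$ only. It also requires that Dynkin's formula holds for the quadratically growing $H$. I would handle the latter with a localisation (stopping time) argument under the growth conditions implicit in the setting. I would also remark that pointwise optimality gives uniqueness up to sets of zero probability under the law of $X_t$.
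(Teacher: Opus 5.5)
Your proposal is correct and follows essentially the same route as the paper. Both compute $\cL_t(H) = u_t^\rT\dot{q} + \frac{1}{2}\bra G, M^{-1}\ket_\rF$ from $\{H,H\}=0$ and $\d_p^2 H = M^{-1}$, take expectations, and complete the square in the $F^{-1}$-weighted norm to get the minimiser $-F\dot{q}$ almost everywhere under the law of $X_t$; your remarks on the law of $X_t$ being fixed at time $t$ and on localisation for Dynkin's formula just spell out points the paper leaves implicit.
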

\begin{proof}
As an infinitely differentiable function evaluated at the process (\ref{XQP}) governed by (\ref{dX})--(\ref{bq}) with the diffusion matrix (\ref{Dbb}), the Hamiltonian (\ref{HVT}) has the   stochastic differential
\begin{align}
\nonumber
    \rd H(X_t)
    & =
    H'(X_t)^{\rT}\rd X_t
    +
    \frac{1}{2} \bra D(Q_t), H''(X_t) \ket_\rF \rd t\\
\label{dH}
        & =
        \cL_t(H)(X_t)
        \rd t
        +
        \dot{Q}_t^\rT 
        \sqrt{G(Q_t)}\rd W_t,
\end{align}
obtained by applying the Ito lemma \cite{KS_1991} and using the generator $\cL_t$ from  (\ref{cL}) along with the relations (\ref{qdot}),  (\ref{dHdp}), (\ref{H'}).  
Since 
$$
    \{H,H\}
    =
    0
$$  
due to the antisymmetry of the Poisson bracket (\ref{Poiss}), the evaluation of the operator (\ref{cL}) at the Hamiltonian $H$ yields 
\begin{align}
\nonumber
    \cL_t(H)
    & =
    u_t^\rT\d_p H
    +
    \frac{1}{2}
    \bra
        G,
        \d_p^2 H
    \ket_\rF    \\
\label{cLH}
    & =
    u_t^\rT \dot{q}
    +
    \frac{1}{2}
    \bra
        G,
        M^{-1}
    \ket_\rF ,        
\end{align}
where use is also made of the Hessian matrix of $H$ over the momentum variables:
\begin{equation}
\label{Hpp}
    \d_p^2 H = \d_p^2 T = M^{-1}, 
\end{equation}
as follows from (\ref{HVT}), (\ref{T}), in accordance with (\ref{H''}).   
In view of (\ref{dH}), (\ref{cLH}),  the mean energy decay rate in (\ref{EHdotmin}) takes the form
\begin{align}
\nonumber
    \frac{\rd }{\rd t}
    \bE H(X_t)
    & = \bE \cL_t(H)(X_t)\\
\label{EHdot}
    & = 
    \bE 
    \Big(
    u_t(X_t)^\rT \dot{Q}_t
    +
    \frac{1}{2}
    \bra
        G(Q_t),
        M(Q_t)^{-1}
    \ket_\rF  
    \Big).        
\end{align}
Here, $u_t^\rT \dot{Q}_t$ is the power (work done per unit time) of the force $u_t$, while $\frac{1}{2}\bra G,M^{-1}\ket_\rF>0$ is the Ito correction term coming from the momentum diffusion matrix $G$ in (\ref{dP}) and the Hessian matrix (\ref{Hpp}) of the Hamiltonian. 
Substitution of (\ref{EHdot}) into the left-hand side of (\ref{EHdotmin})  allows the functional under minimisation to be represented as
\begin{align}
\nonumber
    \frac{\rd }{\rd t}
    \bE H(X_t)&+ \frac{1}{2} \bE (\|u_t(X_t)\|_{F(Q_t)^{-1}}^2)\\
\nonumber
     = &  
    \frac{1}{2}
    \bE
    \bra
        G(Q_t),
        M(Q_t)^{-1}
    \ket_\rF  \\
\nonumber
    & +
    \bE 
    \Big(    
        u_t(X_t)^\rT \dot{Q}_t
        + 
        \frac{1}{2} \|u_t(X_t)\|_{F(Q_t)^{-1}}^2
    \Big)\\
\nonumber
     = &  
    \frac{1}{2}
    \bE
    (
    \bra
        G(Q_t),
        M(Q_t)^{-1}
    \ket_\rF 
    -
    \|\dot{Q}_t\|_{F(Q_t)}^2
    ) \\
\nonumber
    & +
    \frac{1}{2}
    \bE 
    \big(    
        \|u_t(X_t)+ F(Q_t) \dot{Q}_t\|_{F(Q_t)^{-1}}^2
    \big)\\
\label{funmin}
    \> &  
    \frac{1}{2}
    \bE
    (
    \bra
        G(Q_t),
        M(Q_t)^{-1}
    \ket_\rF 
    -
    \|\dot{Q}_t\|_{F(Q_t)}^2
    )       
\end{align}
by using the completion of the square. 
The right-hand side of the inequality in (\ref{funmin}) provides the minimum value for (\ref{EHdotmin}) which is achieved at the force $u_t$ satisfying (\ref{uopt}) for almost all $x \in S\x \mR^n$ in the sense of the probability distribution of the system state $X_t$ at time $t\> 0$. 
\end{proof}

The relation (\ref{uopt}) describes the Langevin viscous damping force \cite{R_1996,Z_2001} which is proportional to the velocity, and the corresponding mean energy decay rate (\ref{EHdot}) acquires the form
\begin{equation}
\label{EHdot1}
    \frac{\rd }{\rd t}\bE H(X_t)
     = 
    \bE 
    \Big(
    \frac{1}{2}
    \bra
        G(Q_t),
        M(Q_t)^{-1}
    \ket_\rF  
    -
    \|\dot{Q}_t\|_{F(Q_t)}^2
    \Big).    
\end{equation}
 Therefore, Lemma~\ref{lem:uopt} gives a variational characterisation of such damping in terms of the pointwise optimisation problem (\ref{EHdotmin}) associated with the stochastic Hamiltonian setting.

In what follows, we will be concerned with the DSH system (\ref{Qdot}), (\ref{dP}) which has the Langevin damping force (\ref{uopt}) with a given damping matrix (\ref{Fsmooth}). Accordingly, the drift vector field (\ref{ax}) takes the form
\begin{equation}
\label{axF}
    a
     = 
        J H'
        -
        \begin{bmatrix}
          0 \\
          1
        \end{bmatrix}
        \ox (F\dot{q})
        =
        \left(
            J - 
            \begin{bmatrix}
                0 & 0\\
                0 & 1
            \end{bmatrix}
            \ox F
        \right)
        H'
\end{equation}
which corresponds to that used in the theory of port-Hamiltonian systems \cite{VJ_2014}, and   
the generator (\ref{cL}) is specified as
\begin{equation}
\label{cLF}
  \cL(\varphi)
    =
    \{\varphi,H\}
    - p^\rT M^{-1}F \d_p\varphi
    +
    \frac{1}{2}
    \bra
        G,
        \d_p^2 \varphi
    \ket_\rF .    
\end{equation}
The formal $L^2(S\x \mR^n, \mR)$-adjoint of the operator $\cL$ in (\ref{cLF}) is given by 
\begin{align}
\nonumber
    \cL^\dagger(\varphi)
    = & 
    -\div(\varphi a)  
    +
    \frac{1}{2}
    \div^2
    (
        \varphi D
    )\\    
\nonumber
    = &  
    \{H,\varphi\}
    +
    p^\rT M^{-1} F \d_p \varphi\\
\label{cL+}
    & +
    \bra
        F, M^{-1}
    \ket_\rF 
    \varphi
    +
    \frac{1}{2}
    \bra
        G, \d_p^2 \varphi
    \ket_\rF .  
\end{align}
Here, in accordance with (\ref{xqp}),   the divergence operator $\div(\cdot)$ acts on $\mR^{2n}$-valued vector fields  formed from $u,v \in C^1(S\x \mR^n, \mR^n)$ as
\begin{equation*}
\label{divuv}
    \div
    \begin{bmatrix}
      u\\
      v
    \end{bmatrix}
    =
    \div_q u + \div_p v, 
\end{equation*}
where $\div_q(\cdot)$, $\div_p(\cdot)$ are the divergence operators acting  on $\mR^n$-valued vector fields over the position and momentum variables, respectively. In application to matrix fields, these divergence operators act in a row-wise fashion and map them to vector fields. Accordingly, the composition $\div^2(\cdot) := \div(\div(\cdot))$ of the divergence operator with itself, which is used in (\ref{cL+}),  maps a symmetric matrix field  $w:= (w_{jk})_{1\< j,k\< 2} \in C^2(S\x \mR^n, \mS_{2n})$ (with blocks $w_{jk} = w_{kj}^\rT \in C^2(S\x \mR^n, \mR^{n \x n})$) to a scalar field 
\begin{align}
\nonumber
    \div^2 w 
    & = 
    \div
    \begin{bmatrix}
      \div_q w_{11} + \div_p w_{12}\\
      \div_q w_{21} + \div_p w_{22}
    \end{bmatrix}\\
\label{div2w}
    & =
    \div_q^2 w_{11} + 2\div_q \div_p w_{12} + \div_p^2 w_{22}.  
\end{align}
In particular, (\ref{div2w}) yields  
\begin{equation*}
\label{divDG}
    \div D 
    = 
    \begin{bmatrix}
         0 \\
         \div_p G
    \end{bmatrix}
    =
    0,
    \qquad
    \div^2 D = \div_p^2 G = 0 
\end{equation*}
by the sparsity of the matrix $D$ in (\ref{Dbb}) and  the $p$-indepen\-dence of $G$.

\section{Position-Momentum Conditioning}
\label{sec:posmom}

Note that, unlike its noisy gradient descent counterpart (\ref{cLgrad+}),  the operator $\cL^\dagger$ in (\ref{cL+}) is 
elliptic only with respect to the momentum variables, thus potentially affecting the smoothness properties for the solution of the FPKE (\ref{fdot}) which are usually due largely  to full ellipticity \cite{E_2008,S_2008}. 
We will therefore use an additional assumption that at any time $t\> 0$,  the state vector $X_t$ in (\ref{XQP}) is absolutely continuously distributed  with a PDF  $f_t:S\x \mR^n \to \mR_+$ (with respect to the $2n$-dimensional  Lebesgue measure on the phase space) which is infinitely differentiable in time and space:
\begin{equation}
\label{fsmooth}
    f_\bullet(\cdot)
    \in
    C^\infty(\mR_+\x S \x \mR^n, \mR_+). 
\end{equation}
These assumptions can be justified 
by using H\"{o}rmander's parabolic hypoellipticity  condition \cite[Theorem 1.1]{H_1967} (see also \cite[Theorems 7.4.3 and 7.4.18]{S_2008}),  whose verification for the system being considered is omitted for brevity. Then the PDF $f_t$ satisfies the FPKE 
\begin{align}
\nonumber
  \d_t f_t 
  = &    \cL^\dagger(f_t) \\
\nonumber  
    =&     
    \{H,f_t\}
    +
    p^\rT M^{-1} F \d_p f_t\\
\label{fdot}
    & +
    \bra
        F, M^{-1}
    \ket_\rF 
    f_t
    +
    \frac{1}{2}
    \bra
        G, \d_p^2 f_t
    \ket_\rF ,   
\end{align}
with the operator $\cL^\dagger$ from (\ref{cL+}) acting over the position and momentum variables of $f_t$. Furthermore, in order to make boundary effects vanish at infinity  in the integration by parts  over the phase space and justify related properties (such as differentiation with respect to  time under expectation), which are used in the subsequent results involving the PDF $f_t$, we assume that the latter belongs to the Schwartz space \cite{V_2002} of rapidly decreasing functions on $S\x \mR^n$ at every moment of time:
\begin{equation}
\label{fgood}
  f_t \in \cS(S\x \mR^n, \mR_+), 
  \qquad
  t\> 0.  
\end{equation}
This property is inherited by the marginal PDF $g_t: S \to \mR_+$  of the position $Q_t$ of the system, obtained by integrating the PDF $f_t$   over the momentum variables:
\begin{equation}
\label{gt}
    g_t(q) = \int_{\mR^n} f_t(q,p)\rd p,
    \qquad
    t \> 0,\ 
    q\in S,    
\end{equation}
so that $g_t \in \cS(S,\mR_+)$. 
Also, we assume that the position PDF is positive everywhere:
\begin{equation}
\label{gtpos}
  g_t(q) > 0,
    \qquad
    t \> 0,\ 
    q\in S.   
\end{equation}
Then the conditional PDF $h_t(\cdot \mid q): \mR^n\to \mR_+$ of the momentum $P_t$, given $Q_t=q$,  is provided by 
\begin{equation}
\label{ht}
  h_t(p\mid q)
  =
  \frac{f_t(q,p)}{g_t(q)} 
\end{equation}
and satisfies $h_t(\cdot \mid q) \in \cS(\mR^n,\mR_+)$ for any $t\> 0$, $q \in S$.  
This position-momentum conditioning allows the joint PDF $f_t$ of $Q_t$ and $P_t$ to be factorised as
\begin{equation}
\label{fght}
    f_t(x) = g_t(q)h_t(p\mid q),
    \qquad
    t \> 0,\ q\in S,\
    p \in \mR^n, 
\end{equation}
with $x$ given by (\ref{xqp}).  The smoothness properties are also inherited by 
the  conditional momentum  mean and covariance matrix
\begin{align}
\label{EPQ}
  \gamma_t(q)
  & :=
  \bE(P_t\mid Q_t=q)
  =
  \int_{\mR^n}
   h_t(p\mid q) p
  \rd p,\\
\nonumber
    \Sigma_t(q)
    & := 
    \cov(P_t\mid Q_t = q)\\
\nonumber
    & =
    \int_{\mR^n}
    (p-\gamma_t(q))
    (p-\gamma_t(q))^\rT 
    h_t(p\mid q)
    \rd p\\
\label{covPQ}
    & =
    \bE (P_tP_t^\rT \mid Q_t = q)- \gamma_t(q)\gamma_t(q)^\rT,    
\end{align}
so that   $\gamma_\bullet(\cdot) 
  \in 
  C^{\infty}(\mR_+\x S,\mR^n)$ and  
$
  \Sigma_\bullet(\cdot)
  \in
  C^{\infty}(\mR_+\x S,\mS_n^+)$.   
The conditional momentum mean $\gamma_t$ in (\ref{EPQ}) drives the evolution of the position PDF $g_t$ according to the PDE \cite[Lemma 3.2]{V_2023_AHP} 
\begin{equation}
\label{gdot}
  \d_t g_t(q) 
  = 
  - \div_q(g_t(q) M(q)^{-1}\gamma_t(q)), 
\end{equation}
which follows from the ODE (\ref{Qdot}) regardless of a particular structure of the momentum dynamics  (\ref{dP}).  The latter affect the PDE from \cite[Lemma 3.3]{V_2023_AHP} (given below for completeness of exposition and not used in what follows) for the conditional momentum PDF (\ref{ht}):
\begin{align*}
    \d_t h_t
    &=
    \{H,h_t\}
    +
    p^\rT M^{-1} F \d_p h_t
    +
    \frac{1}{2}
    \bra
        G, \d_p^2 h_t
    \ket_\rF\\
     & +
     (    \bra
        F, M^{-1}
    \ket_\rF
    -(p-\gamma_t)^\rT M^{-1}\d_q \ln g_t
    + \div_q (M^{-1}\gamma_t))
     h_t,  
\end{align*}
which is cross-coupled with (\ref{gdot}). 
Furthermore, the momentum dynamics  influence the evolution of $\gamma_t$ in (\ref{EPQ}), which is driven by the conditional covariance matrix (\ref{covPQ}) and the  conditional mean 
\begin{equation}
\label{Psit}
  \Psi_t(q)
  := 
  \bE (-\d_q H(X_t) + u_t(X_t)\mid Q_t = q)
\end{equation}
of the drift in the momentum SDE  (\ref{dP})  as shown in Theorem~\ref{th:gamdot} below. The $\mR^n$-valued function $\Psi_t$ 
involves contributions from the conservative and nonconservative forces and will be computed in Lemma~\ref{lem:Psi}. 

\begin{thm}
\label{th:gamdot}
Suppose the joint position-momentum PDF for the DSH system (\ref{Qdot}), (\ref{dP}) with the Langevin damping force (\ref{uopt})   
satisfies (\ref{fsmooth}), (\ref{fgood}), (\ref{gtpos}). Then  the conditional momentum mean (\ref{EPQ}) satisfies 
\begin{align}
\nonumber
    \d_t \gamma_t & + (\d_q \gamma_t) M^{-1} \gamma_t\\ 
\label{gamdot}
    & + \div_q(\Sigma_t M^{-1}) + \Sigma_t M^{-1} \d_q \ln g_t = \Psi_t, 
\end{align}
where $g_t$ is the position PDF (\ref{gt}),   $\d_q \gamma_t: S\to \mR^{n\x n}$ is the Jacobian matrix of $\gamma_t$, and the functions $\Sigma_t$, $\Psi_t$ are given by (\ref{covPQ}), (\ref{Psit}).   
\end{thm}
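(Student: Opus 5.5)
The plan is to compute the time derivative of the first momentum moment $m_t(q) := \int_{\mR^n} p f_t(q,p)\rd p = g_t(q)\gamma_t(q)$ directly from the FPKE (\ref{fdot}). Then we divide by $g_t$ and use (\ref{gdot}) to isolate $\d_t\gamma_t$. Rather than handling the explicit right-hand side of (\ref{fdot}) term by term, we will use the divergence form $\d_t f_t = -\div(f_t a) + \frac{1}{2}\div^2(f_t D)$ from (\ref{cL+}), with $a$ from (\ref{axF}) written as $a = (\dot q, \phi)$. Here $\dot q = M^{-1}p$ and $\phi := -\d_q H - F M^{-1}p$ is the momentum drift, so that $\Psi_t = \int h_t \phi\,\rd p$ by (\ref{Psit}). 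Since $D$ has only the $pp$-block $G(q)$, we get $\div^2(f_tD) = \sum_{j,k}\d_{p_j}\d_{p_k}(G_{jk}f_t)$.

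The first step multiplies the FPKE by $p_k$ and integrates over $p\in\mR^n$, for fixed $q$. The Schwartz assumption (\ref{fgood}) justifies differentiation under the integral and kills boundary terms. The momentum-divergence part gives $-\int p_k\div_p(f_t\phi)\rd p = \int f_t\phi_k\rd p = g_t\Psi_{t,k}$. The diffusion part integrates to zero, because two integrations by parts in $p$ hit $\d_{p_j}\d_{p_l}p_k = 0$. The position-divergence part gives $-\div_q\int p_k f_t M^{-1}p\,\rd p = -\div_q\big(g_t\bE(P_kP^\rT\mid Q=q)M^{-1}\big)$, where the $q$-derivative is pulled outside the $p$-integral. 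Using (\ref{covPQ}), the second conditional moment equals $\Sigma_t+\gamma_t\gamma_t^\rT$, so in vector form
$$
\d_t(g_t\gamma_t) = -\div_q\big(g_t(\Sigma_t + \gamma_t\gamma_t^\rT)M^{-1}\big) + g_t\Psi_t,
$$
where $\div_q$ acts row-wise.

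The second step expands the left-hand side as $g_t\d_t\gamma_t + \gamma_t\d_t g_t$ and substitutes (\ref{gdot}), which turns the second term into $-\gamma_t\div_q(g_tM^{-1}\gamma_t)$. On the right-hand side we use the product rule for the row-wise divergence, $\div_q(\gamma_t\, w^\rT) = \gamma_t\div_q w + (\d_q\gamma_t)w$ with $w = g_tM^{-1}\gamma_t$; this relies on the symmetry of $M^{-1}$. The $\gamma_t\div_q(g_tM^{-1}\gamma_t)$ terms then cancel, leaving $-g_t(\d_q\gamma_t)M^{-1}\gamma_t$. Similarly, $\div_q(g_t\Sigma_tM^{-1}) = g_t\div_q(\Sigma_tM^{-1}) + \Sigma_tM^{-1}\d_q g_t$. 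Dividing by $g_t>0$, which is allowed by (\ref{gtpos}), and writing $\d_q g_t/g_t = \d_q\ln g_t$, we arrive at (\ref{gamdot}).

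The main obstacle is bookkeeping rather than anything conceptual. The row-wise divergence convention for matrix fields has to be applied consistently, and the order of $\Sigma_t$ and $M^{-1}$ must be checked so that the term comes out exactly as $\Sigma_tM^{-1}\d_q\ln g_t$. On the analytic side, we need the interchange of $\d_t,\d_q$ with $\int\rd p$ and the vanishing of boundary terms at infinity in $p$. These follow from (\ref{fsmooth}) and (\ref{fgood}) together with the polynomial growth in $p$ of $\phi$ and of $p_kM^{-1}p$. On the torus factors of $S$ no $q$-boundary terms arise at all, since $q$ is not integrated.
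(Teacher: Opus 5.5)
Your argument is correct, but it takes a different route from the paper.

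\textbf{The paper's route.} The paper works in weak form. For a test function $\varphi(q)$ of bounded support, it computes $\frac{\rd}{\rd t}\bE(\varphi(Q_t)P_t)$ in two ways:
\begin{itemize}
\item kinematically, from $\bE(\varphi(Q_t)\gamma_t(Q_t))$, using the ODE (\ref{Qdot}) and conditioning on $Q_t$;
\item dynamically, from the Ito differential of $\varphi(Q_t)P_t$, which has no Ito correction because it is linear in $p$.
\end{itemize}
It then moves the $q$-derivative off $\varphi$ with the expectation-by-parts formula (\ref{exparts}) of Lemma~\ref{lem:parts}, and concludes from the arbitrariness of $\varphi$.

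\textbf{Your route.} You take the first momentum moment of the FPKE pointwise in $q$. This gives the conservative balance law $\d_t(g_t\gamma_t)+\div_q\big(g_t(\Sigma_t+\gamma_t\gamma_t^\rT)M^{-1}\big)=g_t\Psi_t$. You then convert it to the non-conservative form (\ref{gamdot}) via (\ref{gdot}) and the product rule for the row-wise divergence. The bookkeeping checks out: the $\gamma_t\div_q(g_tM^{-1}\gamma_t)$ terms cancel, and the ordering $\Sigma_tM^{-1}\d_q\ln g_t$ comes out as stated.

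\textbf{What your approach buys.}
\begin{itemize}
\item It is purely analytic and shorter. It needs neither stochastic calculus (with the implicit zero-mean property of the Ito integral) nor Lemma~\ref{lem:parts}.
\item It shows (\ref{gamdot}) as the next equation in a moment hierarchy whose zeroth equation is (\ref{gdot}). You could derive (\ref{gdot}) the same way, by integrating the FPKE over $p$, and so avoid relying on the external citation.
\item It displays the physically natural momentum-balance form. It also makes just as transparent as the paper that the noise drops out and the momentum dynamics enter only through $g_t\Psi_t$.
\end{itemize}

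\textbf{What it costs.} You differentiate under the $p$-integral with respect to both $q$ and $t$. This relies on the joint Schwartz property (\ref{fgood}), i.e.\ rapid decay in $p$ of the $q$-derivatives of $f_t$, locally uniformly in $q$. The paper's weak formulation keeps all $q$-derivatives on the test function. It therefore only needs differentiability of expectations in time and smoothness of the conditional moments, which suits settings where pointwise moment manipulations are less convenient.
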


\begin{proof}
Let $\varphi \in C^\infty(S,\mR)$  be an arbitrary  infinitely differentiable test function of bounded support. Then,  by the properties of iterated conditional expectations \cite{S_1996} and (\ref{EPQ}), 
\begin{align}
\nonumber
    \bE (\varphi(Q_t) P_t ) 
    & = 
    \bE \bE (\varphi(Q_t) P_t\mid Q_t)
    = 
    \bE (\varphi(Q_t) \bE (P_t\mid Q_t))\\
\label{EphiQP}
    & =
    \bE(\varphi(Q_t) \gamma_t(Q_t)) 
\end{align}
at any time $t\> 0$.   
By a similar reasoning, differentiation of (\ref{EphiQP}) in time yields 
\begin{align}
\nonumber
    \frac{\rd }{\rd t}
    \bE (\varphi(Q_t) P_t )
    = & 
    \frac{\rd }{\rd t}\bE(\varphi(Q_t) \gamma_t(Q_t))\\
\nonumber
    = & \bE(\varphi(Q_t) (\d_t \gamma_t(Q_t) + \gamma_t{}'(Q_t)\dot{Q}_t)\\
\nonumber
     & + \gamma_t(Q_t) \dot{Q}_t^\rT \varphi'(Q_t)) \\
\nonumber
    = & \bE \bE(\varphi(Q_t) (\d_t \gamma_t(Q_t) + \gamma_t{}'(Q_t)\dot{Q}_t)\\
\nonumber
     & + \gamma_t(Q_t) \dot{Q}_t^\rT \varphi'(Q_t) \mid Q_t) \\     
\nonumber
    = & \bE (\varphi(Q_t) (\d_t \gamma_t(Q_t) + \gamma_t{}'(Q_t)\bE (\dot{Q}_t\mid Q_t))\\
\label{EphiQP1}
     & + \gamma_t(Q_t) \bE (\dot{Q}_t \mid Q_t)^\rT \varphi'(Q_t)), 
\end{align}
where, for the sake of brevity, we denote by  $\gamma_t{}'(q):= \d_q \gamma_t(q)$ the Jacobian matrix of $\gamma_t$ and by $\varphi'(q):= \nabla \varphi(q)$  the gradient of $\varphi$ with respect to  $q \in S$.  Here, 
\begin{equation}
\label{EQdotQ}
    \bE (\dot{Q}_t \mid Q_t) 
    = 
    \bE (M(Q_t)^{-1} P_t \mid Q_t)
    =
    M(Q_t)^{-1} \gamma_t(Q_t) 
\end{equation}
in view of (\ref{Qdot}), (\ref{qdot}), (\ref{dHdp}), (\ref{EPQ}).  
Hence, by substituting (\ref{EQdotQ}) into (\ref{EphiQP1}) and using the symmetry of the mass matrix $M$, it follows that 
\begin{align}
\nonumber
    \frac{\rd }{\rd t}
    \bE (\varphi(Q_t) P_t )
    = & \bE (\varphi(Q_t) (\d_t \gamma_t(Q_t) + \gamma_t{}'(Q_t)M(Q_t)^{-1} \gamma_t(Q_t))\\
\label{EphiQP2}
     & + \gamma_t(Q_t) \gamma_t(Q_t)^\rT M(Q_t)^{-1} \varphi'(Q_t)).  
\end{align}
While the time derivative in (\ref{EphiQP2}) is found by using the ODE (\ref{Qdot}), it can also be computed in an alternative way based on the momentum SDE (\ref{dP}). To this end, note that the $\mR^n$-valued function  $\varphi(q)p$ is linear in $p:= (p_k)_{1\< k\< n}\in \mR^n$, and hence, its stochastic differential has zero Ito correction term: $\frac{1}{2} \sum_{1\< j,k\< n}G_{jk}(q)\d_{p_j}\d_{p_k}(\varphi(q)p) = 0$ by the sparsity of the diffusion  matrix (\ref{Dbb}). Therefore, 
\begin{align}
\nonumber
    \frac{\rd }{\rd t}\bE (\varphi(Q_t) P_t )
    = & 
    \bE (P_t \dot{Q}_t^\rT \varphi'(Q_t)\\
\nonumber
    &  + \varphi(Q_t)(-\d_qH(X_t) + u_t(X_t)))\\
\nonumber
    = & 
    \bE \bE (P_t P_t^\rT M(Q_t)^{-1}\varphi'(Q_t)\\
\nonumber
    &  + \varphi(Q_t)(-\d_qH(X_t) + u_t(X_t))\mid Q_t)\\    
\nonumber
    = & 
    \bE ( \bE(P_t P_t^\rT\mid Q_t) M(Q_t)^{-1}\varphi'(Q_t)\\
\nonumber
    &  + \varphi(Q_t)\bE (-\d_qH(X_t) + u_t(X_t)\mid Q_t))\\        
\nonumber
    = & 
    \bE ( \bE(P_t P_t^\rT\mid Q_t) M(Q_t)^{-1}\varphi'(Q_t)\\
\label{EphiQP3}
    &  + \varphi(Q_t)\Psi_t(Q_t)). 
\end{align}
Since these two alternative ways (\ref{EphiQP2}), (\ref{EphiQP3}) of finding the same quantity must give identical results, their comparison leads to 
\begin{align}
\nonumber
    \bE &(\varphi(Q_t) (\d_t \gamma_t(Q_t)  + \gamma_t{}'(Q_t)M(Q_t)^{-1} \gamma_t(Q_t) - \Psi_t(Q_t))\\
\nonumber
     & = 
     \bE ((\bE(P_tP_t^\rT \mid Q_t)-\gamma_t(Q_t) \gamma_t(Q_t)^\rT) M(Q_t)^{-1}\varphi'(Q_t))\\
\label{EphiQP4}
    & = 
    \bE (\Sigma_t(Q_t)M(Q_t)^{-1}\varphi'(Q_t)), 
\end{align}
where use is also made of (\ref{covPQ}). Application of (\ref{exparts}) allows the right-hand side of (\ref{EphiQP4}) to be computed as 
\begin{align}
\nonumber
    \bE  (\Sigma_t(Q_t)&M(Q_t)^{-1}\varphi'(Q_t))\\
\nonumber
     = &   
     -\bE(\varphi(Q_t)(\div_q (\Sigma_t M^{-1})(Q_t)\\
\label{right}
      & + \Sigma_t(Q_t)M(Q_t)^{-1} \nabla \ln g_t(Q_t))) 
\end{align}
in view of $g_t>0$ as assumed in (\ref{gtpos}). 
Subtraction of the right-hand side of (\ref{right}) from the left-hand side of (\ref{EphiQP4}) yields
\begin{align}
\nonumber
    \int_S
     g_t(q)\varphi(q) &(\d_t \gamma_t(q)  + \gamma_t{}'(q) M(q)^{-1} \gamma_t(q)\\
    \nonumber 
    & - \Psi_t(q) 
    + \div_q(\Sigma_t(q) M(q)^{-1})\\
\label{gamdot1}
    & + \Sigma_t(q) M(q)^{-1} \nabla \ln g_t(q)) \rd q
    = 0.  
\end{align}
Since the function $\varphi$ is arbitrary,  and $g_t>0$, then (\ref{gamdot1}) implies (\ref{gamdot}). 
\end{proof}

Theorem~\ref{th:gamdot} is completed by the following calculation of the right-hand side of the PDE (\ref{gamdot}). 

\begin{lem}
\label{lem:Psi}
Under the conditions of Theorem~\ref{th:gamdot}, 
the function $\Psi_t: S \to \mR^n$ in (\ref{Psit}) is computed for any time $t\> 0$  as  
\begin{equation}
\label{Psi0}
  \Psi_t
  =   
  -\nabla V 
  +
  \frac{1}{2}
  \big(
  \bra
    M_k,
    \gamma_t \gamma_t^\rT + \Sigma_t 
  \ket_\rF
  \big)_{1\< k \< n}
  - FM^{-1}\gamma_t 
\end{equation}
in terms of the conditional momentum mean and covariance matrix from (\ref{EPQ}), (\ref{covPQ}) and the functions (\ref{Mk}). 
\end{lem}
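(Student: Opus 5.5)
The plan is a direct computation of the conditional expectation in (\ref{Psit}). With the Langevin damping (\ref{uopt}), the momentum drift is $-\d_q H(X_t) - F(Q_t)M(Q_t)^{-1}P_t$. The gradient $\d_q H$ is given by (\ref{dHdq}) in the momentum form $\nabla V(q) - \frac{1}{2}(p^\rT M_k(q) p)_{1\< k\< n}$, with $M_k$ from (\ref{Mk}). The key structural fact is that, for fixed position $q$, every term of the drift is a polynomial in $p$ of degree at most two, with coefficients depending only on $q$. Conditioning on $Q_t=q$ therefore reduces $\Psi_t(q)$ to the first and second conditional moments of $P_t$, which are the quantities $\gamma_t$, $\Sigma_t$ from (\ref{EPQ}), (\ref{covPQ}).

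I would treat the terms one at a time, using linearity of conditional expectation.

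\emph{Potential term.} The potential energy term $-\nabla V(Q_t)$ is $Q_t$-measurable, so it passes through the conditioning unchanged.

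\emph{Damping term.} The damping term is linear in $P_t$ with $Q_t$-measurable matrix coefficient $-F(Q_t)M(Q_t)^{-1}$. Hence it yields $-F M^{-1}\gamma_t$, exactly as in (\ref{EQdotQ}).

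\emph{Centrifugal term.} For each $k$, I write the quadratic form as a trace,
$$
    P_t^\rT M_k(Q_t)P_t = \bra M_k(Q_t), P_tP_t^\rT\ket_\rF ,
$$
using the symmetry of $M_k$ inherited from that of $M$ and $\d_{q_k}M$. Pulling the $Q_t$-measurable matrix $M_k(Q_t)$ out of the conditional expectation leaves $\bE(P_tP_t^\rT\mid Q_t=q)$. By (\ref{covPQ}) this equals $\gamma_t\gamma_t^\rT + \Sigma_t$, which produces the middle term of (\ref{Psi0}) with its factor $\frac{1}{2}$ and the correct sign.

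The only point needing care, and the main (mild) obstacle, is justifying the conditional expectations. This requires integrability of the second moments of $P_t$ under $h_t(\cdot\mid q)$, together with the representation of $\bE(\cdot\mid Q_t=q)$ as an integral against $h_t(p\mid q)$. Both follow from the Schwartz-class assumption (\ref{fgood}) and the positivity (\ref{gtpos}), which give $h_t(\cdot\mid q)\in\cS(\mR^n,\mR_+)$. I would therefore write $\Psi_t(q)=\int_{\mR^n}(-\d_qH(q,p)-F(q)M(q)^{-1}p)h_t(p\mid q)\rd p$ explicitly and evaluate it term by term. This makes the computation fully pointwise in $q$ and free of any measure-theoretic subtlety. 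Summing the three contributions gives (\ref{Psi0}).
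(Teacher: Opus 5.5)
Your proposal is correct and follows essentially the same route as the paper's proof. Both condition the momentum form of (\ref{dHdq}) on $Q_t=q$, rewrite the centrifugal term through the second-moment identity $\bE(P_tP_t^\rT\mid Q_t=q)=\gamma_t\gamma_t^\rT+\Sigma_t$, take the conditional mean of the Langevin force as $-FM^{-1}\gamma_t$, and assemble the terms into (\ref{Psi0}).
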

\begin{proof}
In view of (\ref{qdot}), the conditional averaging of  (\ref{dHdq}) leads to 
\begin{align}
\nonumber
    \bE (\d_qH(X_t) &\mid Q_t = q)
    =  
    \nabla V(q)\\
\label{EdTQ}
    & -
  \frac{1}{2}
  \big(
  \bra
    M_k(q),
    \bE(P_tP_t^\rT \mid Q_t = q)
  \ket_\rF
  \big)_{1\< k \< n}     
\end{align}
for any $t\> 0$ and $q \in S$. The conditional second-moment matrix of the momentum in (\ref{EdTQ}) is related to 
(\ref{EPQ}), (\ref{covPQ}) by 
\begin{equation}
\label{EPPQ}
    \bE(P_tP_t^\rT \mid Q_t = q)
    =
    \gamma_t(q) \gamma_t(q)^\rT + \Sigma_t(q).  
\end{equation}
The conditional mean of the Langevin viscous damping force (\ref{uopt}) in (\ref{dP})  is found from 
\begin{equation}
\label{EFvQ}
  \bE (F(Q_t) M(Q_t)^{-1} P_t \mid Q_t = q)
  =
  F(q) M(q)^{-1} \gamma_t(q)
\end{equation}
using (\ref{EPQ}) again. By assembling (\ref{EdTQ})--(\ref{EFvQ}), it now follows that (\ref{Psit}) takes the form (\ref{Psi0}). 
\end{proof}

Therefore, in the case of Langevin damping, the conditional momentum mean function $\gamma_t$ evolves in time according to a PDE 
\begin{align}
\nonumber
    \d_t \gamma_t & + (\d_q \gamma_t) M^{-1} \gamma_t\\ 
\nonumber
    & + \div_q(\Sigma_t M^{-1}) + \Sigma_t M^{-1} \d_q \ln g_t\\
\label{gamdot2}
     = &  
      -\nabla V
  +
  \frac{1}{2}
  \big(
  \bra
    M_k,
    \gamma_t \gamma_t^\rT + \Sigma_t 
  \ket_\rF
  \big)_{1\< k \< n}
   - FM^{-1}\gamma_t  
\end{align}
obtained by combining (\ref{gamdot}) with (\ref{Psi0}). The particular structure of the momentum dynamics (\ref{dP}) enters (\ref{gamdot2}) only through its right-hand side. Moreover, the Langevin damping (\ref{uopt}) manifests itself in (\ref{gamdot2}) only through the term 
   $- FM^{-1}\gamma_t$, which is replaced with  $\bE (u_t(X_t)\mid Q_t=q)$  for a general nonconservative force, as can be seen from the proof of Lemma~\ref{lem:Psi}.

\section{Statistical Mechanical Invariant Measure}
\label{sec:invmeas}

For a given $\cT>0$ and the inverse temperature parameter $\beta$ from  (\ref{betau}) as before,   consider the Maxwell-Boltzmann PDF
\begin{equation}
\label{f*}
    f_*(x) 
    := 
    \frac{1}{\Pi(\beta)} \re^{-\beta H(x)},
    \qquad
    x \in S\x \mR^n,
\end{equation}
which, similarly to (\ref{f*grad}), also pertains to the statistical mechanical equilibrium \cite{ME_1981}, except that it is associated with the Hamiltonian (\ref{HVT}) rather than the potential energy $V$. In comparison with (\ref{Zbet}),  the partition function in (\ref{f*})  is appropriately modified: 
\begin{align*}
\nonumber
  \Pi(\beta)
  & :=
  \int_{S \x \mR^n}
  \re^{-\beta H(x)}
  \rd x\\
\nonumber
  & = 
  \int_S
  \re^{-\beta V(q)}
  \Big(
  \int_{\mR^n}
  \re^{-\frac{1}{2}\beta \|p\|_{M(q)^{-1}}^2}
  \rd p
  \Big)
  \rd q\\
\label{Pibet}
  & =
  (2\pi \cT )^{n/2}
  \int_S
  \re^{-\beta V(q)}
  \sqrt{\det M(q)}
  \rd q , 
\end{align*}
where the additive structure of $H$ is used  along with  the quadratic dependence of the kinetic energy (\ref{T}) on the momentum. As in the factorisation (\ref{fght}) of $f_t$  using $g_t$, $h_t$ from (\ref{gt}), (\ref{ht}), 
the equilibrium PDF (\ref{f*}) admits the factorisation 
\begin{equation}
\label{fgh*}
    f_*(x)
    =
    g_*(q) h_*(p\mid q),
    \qquad
    x \in S\x \mR^n,
\end{equation}
in terms of the equilibrium  position PDF $g_*$  computed as  
\begin{align}
\nonumber
  g_*(q)
  & =
  \int_{\mR^n}
  f_*(q, p)
  \rd p\\
\label{g*}
  & =
  \frac{(2\pi \cT )^{n/2}}{\Pi(\beta)}
  \re^{-\beta V(q)}
  \sqrt{\det M(q)},
  \qquad
  q \in S,
\end{align}
and the equilibrium conditional momentum PDF $h_*$ given the position:
\begin{equation}
\label{h*}
  h_*(p\mid q)
  =
  \frac{  (2\pi \cT )^{-n/2}}{\sqrt{\det M(q)}}
  \re^{-\frac{1}{2}\beta\|p\|_{M(q)^{-1}}^2},
  \quad
  p \in \mR^n,    
\end{equation}
which is a Gaussian PDF with zero mean and the conditional covariance matrix $\cT M(q)$. 

As discussed in \cite[Lemma 4.1]{V_2023_AHP} and can also be obtained from  \cite[Theorem 4 on p. 211]{Soize_1994},  the PDF (\ref{f*}) is an invariant PDF for the diffusion process $X$ in  (\ref{dX})--(\ref{bq}) with the Langevin damping force (\ref{uopt}), that is, a steady-state solution of the FPKE (\ref{fdot}), 
\begin{equation}
\label{cL+0}
    \cL^\dagger(f_*) = 0, 
\end{equation}
if and only if the damping and diffusion matrices in (\ref{axF}), (\ref{bq}) satisfy the following multivariate counterpart of the Einstein relation \cite[Eq. (3.14) on p. 260]{K_1966}:
\begin{equation}
\label{FG}
    F(q) = \frac{\beta}{2} G(q),
    \qquad
    q \in S.
\end{equation}
Under this condition, which is equivalent to $ G = 2\cT F$ in view of (\ref{betau}) and is assumed to be fulfilled in what follows,  the SDEs (\ref{dP}), (\ref{dX}) with the Langevin damping force (\ref{uopt}) acquire parametric dependence on $\cT$ (or $\beta$), and hence, so also do  the generator $\cL$ in  (\ref{cLF}) and its adjoint $\cL^\dagger$ in (\ref{cL+}).   
Accordingly, the FPKE (\ref{fdot}) for the system state PDF $f_t$ takes the form  
\begin{align}
\nonumber
  \d_t f_t
  = &  
  \cL^\dagger(f_t) \\
\nonumber
    = &    
    \{H,f_t\}
    +
    \frac{1}{2}
    \bra
        G, \d_p^2 f_t
    \ket_\rF    \\
\label{fdotbet} 
    & + 
    \frac{\beta}{2}
    (p^\rT M^{-1} G \d_p f_t
    +
        \bra
        G, M^{-1}
    \ket_\rF 
    f_t) 
\end{align}
and has the equilibrium PDF (\ref{f*}) as its steady-state solution in the sense of (\ref{cL+0}).

\section{Evolution of $\chi^2$-Divergence from Invariant PDF}
\label{sec:chi2}

Under the damping-diffusion relation (\ref{FG}), the deviation of the joint position-momentum PDF $f_t$ at time $t\> 0$  from the invariant PDF $f_*$ in   (\ref{f*}) can be quantified by the $\chi^2$-divergence \cite{PW_2025} 
\begin{align}
\nonumber
    K(t)
    & := 
    \int_{S\x \mR^n}
    \frac{(f_t(x)-f_*(x))^2}{f_*(x)}
    \rd x\\    
\label{chi2}
    & =
    R(t)
    +
    \int_{S\x \mR^n}
    (f_*(x)-2f_t(x))
    \rd x
    =
    R(t)-1.  
\end{align}
It is expressed in terms of an auxiliary function of time 
\begin{equation}
\label{R}
    R(t)
    :=
    \bE \theta_t(X_t)
    =
    \int_{S\x \mR^n}
    \frac{f_t(x)^2}{f_*(x)}
    \rd x
    =
    \|r_t\|^2   , 
\end{equation}
where 
\begin{equation}
\label{bR}
    \ln R(t)
    =
    \bR(f_t\| f_*) \> 0
\end{equation} 
is Renyi's second-order relative entropy \cite{R_1961} of $f_t$ with respect to $f_*$, which is an upper bound for the Kullback-Leibler relative entropy 
$$
    \bD(f_t\| f_*) = \bE \ln \theta_t(X_t) \< \bR(f_t\| f_*)
$$
(by Jensen's inequality) 
and vanishes if and only if $f_t = f_*$ in $S\x \mR^n$; cf. (\ref{bHalf}).   Here, an auxiliary  function $\theta_t: S \x \mR^n\to \mR_+$ is defined along with functions $\xi_t: S\to \mR_+$ and $\eta_t: S\x\mR^n \to \mR_+$ by  the  PDF ratios
\begin{align}
\label{theta}
  \theta_t(x)
  & :=
  \frac{f_t(x)}{f_*(x)} = \xi_t(q)\eta_t(x),\\
\label{xi}
  \xi_t(q)
  & :=
  \frac{g_t(q)}{g_*(q)},\\
\label{eta}
  \eta_t(x)
  & :=
  \frac{h_t(p\mid q)}{h_*(p\mid q)},  
\end{align}
which use (\ref{xqp}) and the factorisations (\ref{fght}), (\ref{fgh*}).  
Also, the norm $\|\cdot \|$ in (\ref{R}) is associated with the inner product $\bra \cdot, \cdot\ket$ in the Hilbert space $L^2(S \x \mR^n, \mR)$ (or the spaces $L^2(S, \mR)$ and $L^2(\mR^n, \mR)$ in what follows, depending on the context) and is applied to a function 
\begin{equation}
\label{r}
    r_t(x) 
    := 
    \frac{f_t(x)}{\psi(x)}
    =
    \kappa_t(q)
    \rho_t(p\mid q), 
    \quad
    t\> 0,\
    x \in S\x \mR^n, 
\end{equation}
where $\psi$ is another auxiliary function associated with (\ref{f*}) as 
\begin{equation}
\label{psi}
    \psi(x):= 
    \sqrt{f_*(x)}
    =
    \frac{1}{\sqrt{\Pi(\beta)}}
    \re^{-\frac{1}{2}\beta H(x)} .  
\end{equation}
The functions $\kappa_t: S \to \mR_+$ and $\rho_t(\cdot \mid q): \mR^n\to \mR_+$,  with $q\in S$, which are used in (\ref{r}), also arise from the factorisations (\ref{fght}), (\ref{fgh*}):
\begin{align}
\label{kappa}
  \kappa_t(q)
  & := \frac{g_t(q)}{\sqrt{g_*(q)}},\\
\label{rho}
  \rho_t(p\mid q)
  & := 
  \frac{h_t(p\mid q)}{\sqrt{h_*(p\mid q)}}. 
\end{align}
These factorisations allow the $\chi^2$-divergence (\ref{chi2}) to be decomposed as 
\begin{equation}
\label{KKK}
    K(t) 
    = 
    K_1(t) + K_2(t) 
\end{equation}
into a position part $K_1$ and a momentum part $K_2$ defined by 
\begin{align}
\label{K1}
    K_1(t)
    & := 
      \bE \xi_t(Q_t)-1
      =
      \|\kappa_t\|^2-1,\\
\nonumber
      K_2(t)
      & := 
          \bE
          (
    \xi_t(Q_t)
    (
    \bE 
    (
        \eta_t(X_t)
    \mid  
    Q_t
    )
    -1
    )
    )\\
\nonumber
    & =  
    \bE
    (
    \xi_t(Q_t)
    (\|\rho_t(\cdot\mid Q_t)\|^2-1)
    )\\    
\label{K2}
    & = 
    \int_S
    \kappa_t(q)^2
    (\|\rho_t(\cdot\mid q)\|^2-1)
    \rd q. 
\end{align}
Here,  the functions $\kappa_t$, $\rho_t$ from  (\ref{kappa}), (\ref{rho}) have been used along with the  PDF ratios (\ref{theta})--(\ref{eta}) and the relations 
\begin{align*}
    \bE \xi_t(Q_t)
    & =
    \int_S
    \kappa_t(q)^2
    \rd q
    =
    \re^{\bR(g_t\|g_*)},\\
    \bE 
    (
        \eta_t(X_t)
        \mid 
        Q_t=q
    )
    & =
    \int_{\mR^n}
    \rho_t(p \mid q)^2
    \rd p\\
    & =
    \re^{\bR(h_t(\cdot\mid q)\|h_*(\cdot\mid q))}  
\end{align*}
which involve the corresponding Renyi's  relative entropies. 
Note that both functions $K_1$ and $K_2$ in (\ref{K1}), (\ref{K2}) take nonnegative values. Furthermore, $K_1(t)=0$ if and only if the position $Q_t$ has the invariant PDF $g_*$ from (\ref{g*}), that is, $g_t = g_*$ everywhere in $S$.  Similarly, and in view of (\ref{gtpos}), the equality  $K_2(t)=0$ holds if and only if the momentum $P_t$, given $Q_t=q$,   has the invariant conditional momentum PDF $h_*(\cdot\mid q)$ from (\ref{h*}) for all $q\in S$, that is, when $h_t = h_*$ everywhere in $S \x \mR^n$. The latter property implies that the inequality 
\begin{equation}
\label{K>K1}
    K(t) 
    \>  
    K_1(t)
\end{equation}
(following from (\ref{KKK}) due to $K_2\> 0$) 
holds as an equality if and only if $h_t = h_*$. 

For what follows,  in order to secure finiteness and smoothness of (\ref{R}) with respect to  time, we assume that the function $r_t$ in (\ref{r}) is square integrable, that is, $r_t \in L^2(S\x \mR^n,\mR_+)$ for any $t\> 0$, and the resulting map $t\mapsto r_t$ is assumed to be infinitely differentiable as a map from $\mR_+$ to the Hilbert space  $L^2(S\x \mR^n,\mR)$: 
\begin{equation}
\label{rCinf}
    r_\bullet \in C^\infty(\mR_+, L^2(S\x \mR^n,\mR)). 
\end{equation}
This smoothness property is inherited by the function (\ref{kappa}), so that 
\begin{equation}
\label{kappaCinf}
    \kappa_\bullet \in C^\infty(\mR_+, L^2(S,\mR)),  
\end{equation}
with its time derivatives (including the function itself) being related to those of (\ref{r}) by 
\begin{align}
\nonumber
    \d_t^k \kappa_t(q)
    & = 
    \int_{\mR^n}
    \sqrt{h_*(p\mid q)}\, 
    \d_t^k r_t(q,p)
    \rd p\\
\label{kr}
    & =
    \bra
        \sqrt{h_*(\cdot\mid q)},\, 
        \d_t^k r_t(q,\cdot)
    \ket
\end{align}
for all     $t\> 0$, 
    $q \in S$ and  
$k = 0,1,2,\ldots$. Furthermore, application of the Cauchy-Bunyakovsky-Schwarz inequality to (\ref{kr}) yields 
$$
    \|\d_t^k \kappa_t\|^2
    \< 
    \int_S
        \underbrace{\|\sqrt{h_*(\cdot \mid q)}\|^2}_1
        \|\d_t^k r_t(q,\cdot)\|^2
    \rd q
    =
    \|\d_t^k r_t\|^2, 
$$ 
where use is also made of the normalisation property for the invariant conditional momentum PDF  $h_*(\cdot \mid q)$ from (\ref{h*}).

\begin{thm}
\label{th:Kdot}
Suppose the conditions of Theorem~\ref{th:gamdot} and the damping-diffusion relation (\ref{FG}) are satisfied along with (\ref{rCinf}).  
Then the $\chi^2$-divergence (\ref{chi2}) is a nonincreasing function of time:
\begin{equation}
\label{Kdot0}
  \dot{K}(t)\< 0 
\end{equation}
for any $t\> 0$. The inequality (\ref{Kdot0}) holds as an equality at those and only those moments of time when the conditional momentum PDF (\ref{ht}) coincides with its invariant counterpart (\ref{h*}), that is,  $h_t= h_*$ everywhere in the phase space $S \x \mR^n$. 
\end{thm}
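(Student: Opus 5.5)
The plan is to compute $\dot K(t)=\dot R(t)$ directly from the FPKE (\ref{fdotbet}) and cast it as a nonpositive quadratic form in $\theta_t=f_t/f_*$. Since $R(t)=\int f_t^2/f_*\,\rd x$, differentiation under the integral (justified by (\ref{fgood}) and (\ref{rCinf})) gives
$$
\dot R(t)=2\int_{S\x\mR^n}\theta_t\,\cL^\dagger(f_t)\,\rd x=2\bra \cL(\theta_t), f_t\ket=2\bra \theta_t\cL(\theta_t), f_*\ket.
$$
The key structural fact will be that, under (\ref{FG}), the adjoint acts on $f=\theta f_*$ as $\cL^\dagger(\theta f_*)=f_*\,\widetilde\cL(\theta)$. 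Here $\widetilde\cL(\theta)=-\{\theta,H\}-p^\rT M^{-1}F\d_p\theta+\frac12\bra G,\d_p^2\theta\ket_\rF$ is the time-reversed generator, in which the Hamiltonian part changes sign and the symmetric Ornstein--Uhlenbeck-type part in $p$ is unchanged. I will verify this identity directly from (\ref{cL+}) and (\ref{cL+0}), using $\d_p f_*=-\beta f_*M^{-1}p$ and $\{H,f_*\}=0$.

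With this identity in hand, $\dot R=2\int f_*\theta_t\widetilde\cL(\theta_t)\,\rd x$. The antisymmetric Poisson-bracket term gives $\int f_*\theta\{\theta,H\}\,\rd x=\frac12\int f_*\{\theta^2,H\}\,\rd x=0$, by integration by parts together with $\{f_*,H\}=0$. The remaining terms combine as $\d_p$-divergence terms:
$$
\frac12\,\div_p(f_*G\d_p\theta)=f_*\Big(\frac12\bra G,\d_p^2\theta\ket_\rF-\frac{\beta}{2}p^\rT M^{-1}G\d_p\theta\Big),
$$
which, by (\ref{FG}), is exactly $f_*$ times the dissipative part. Integrating by parts in $p$ then yields
$$
\dot K(t)=-\int_{S\x\mR^n} f_*\,\|\d_p\theta_t\|_{G}^2\,\rd x\<0 ,
$$
since $G\succ0$. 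Boundary terms vanish by the Schwartz assumption (\ref{fgood}), and the periodic coordinates are handled automatically.

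For the equality case, the factorisation (\ref{theta}) gives $\d_p\theta_t=\xi_t(q)\d_p\eta_t(x)$, and $\xi_t>0$ by (\ref{gtpos}) together with $g_*>0$. Because $f_*>0$ and $G\succ0$, $\dot K(t)=0$ holds iff $\d_p\eta_t\equiv0$, that is, iff $h_t(\cdot\mid q)=c(q)h_*(\cdot\mid q)$ for each $q$. Normalisation of both conditional PDFs forces $c\equiv1$, so $h_t=h_*$. Conversely, $h_t=h_*$ gives $\d_p\theta_t=0$ and hence $\dot K=0$.

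I expect the main obstacle to be the careful verification of the intertwining identity $\cL^\dagger(\theta f_*)=f_*\widetilde\cL(\theta)$ with position-dependent $M$ and $G$, in particular checking that the $\bra F,M^{-1}\ket_\rF$ zeroth-order term cancels against the derivative of $\re^{-\frac\beta2\|p\|^2_{M^{-1}}}$, and justifying the integrations by parts. If the identity proves messy, I will instead write $\dot R=2\bra\cL(\theta_t),f_t\ket$ and split $\cL$ into its Poisson and dissipative parts, integrating each by parts against $f_*\theta_t$ directly. The paper's $r_t,\kappa_t,\rho_t$ formalism suggests an equivalent route via the symmetrised operator $\psi^{-1}\cL^\dagger\psi$, whose momentum part is a family of harmonic-oscillator Hamiltonians with ground state $\sqrt{h_*}$. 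There, nonpositivity follows from the ground-state energy being zero, and equality from nondegeneracy of the ground state, which matches the argument above.
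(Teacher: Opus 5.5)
Your proof is correct, but it takes a different route from the paper.

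The paper conjugates the FPKE by $\psi=\sqrt{f_*}$ to get $\d_t r_t=(\ad_H-\cB)(r_t)$, where $\cB_q=\frac{1}{2}(\cA_q-\Lambda(q))$ is a shifted quantum harmonic oscillator Hamiltonian in $p$. Nonpositivity then rests on the ground-state energy $\lambda_{\min}(\cA_q)=\Lambda(q)$ computed in Appendix~\ref{sec:ground}. The equality case rests on the discrete spectrum and on the nondegeneracy of the ground state $\sqrt{h_*(\cdot\mid q)}$.

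You work instead with the likelihood ratio $\theta_t=f_t/f_*$ and the time-reversal identity $\cL^\dagger(\theta f_*)=f_*\widetilde{\cL}(\theta)$. This identity does hold. Under (\ref{FG}) the zeroth-order terms cancel exactly as in $\cL^\dagger(f_*)=0$. The cross term from $\frac12\bra G,\d_p^2(\theta f_*)\ket_\rF$ reverses the sign of the first-order damping term, and the Poisson term changes sign. This yields the explicit Dirichlet form $\dot K(t)=-\int f_*\|\d_p\theta_t\|_G^2\,\rd x$.

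The two routes are linked by the ground-state representation: for $r=\theta\psi$ one has $2\bra r,\cB(r)\ket=\int f_*\|\d_p\theta\|_G^2\,\rd x$. So your formula is the paper's quadratic form with the conjugation undone.

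What each approach buys:
\begin{itemize}
\item Your version needs no spectral theory. The equality case becomes immediate: $\d_p\eta_t\equiv 0$ on the connected set $\mR^n$, then normalisation.
\item The paper's version produces an evolution with a skew part and a positive semi-definite part $\cB$ that annihilates $r_t$ at break times. This structure is reused in Lemmas~\ref{lem:rdot*} and~\ref{lem:F...} for the higher-order derivatives, and it exposes the oscillator spectral gap as a possible handle on convergence rates.
\end{itemize}

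One overstatement in your proposal: vanishing of the boundary terms does not follow from (\ref{fgood}) alone. The fluxes involve $f_t^2/f_*$ and $f_t\,\d_p f_t/f_*$, and a Schwartz function may decay more slowly in $p$ than the Gaussian factor $\sqrt{f_*}$. What is actually needed is decay of $r_t=f_t/\psi$ and its derivatives. This is the same implicit assumption the paper relies on for $\bra r_t,\ad_H(r_t)\ket=0$ and for the self-adjointness of $\cB$.
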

\begin{proof}
With $r_t$ in (\ref{r}) inheriting from $f_t$ the infinite differentiability in time, the partial time derivatives  $\d_t^k r_t$ provide the corresponding time derivatives for $r_t$ as a map in (\ref{rCinf}). In particular, the time derivative of the function (\ref{chi2}) takes the form
\begin{equation}
\label{Kdot}
    \dot{K}(t) = \dot{R}(t) = 2\bra r_t, \d_t r_t\ket ,  
\end{equation}
where $R$ is given by (\ref{R}). A combination of (\ref{r}) with the FPKE (\ref{fdotbet}) yields
\begin{equation}
\label{rdot}
    \d_t r_t 
     = 
    \frac{1}{\psi}\d_t f_t
    =
    \frac{1}{\psi}
    \cL^\dagger(f_t).  
\end{equation}
We will now compute the right-hand side of (\ref{rdot}) by using the representation $f_t = \psi r_t$ which follows from (\ref{r}), so that 
\begin{align}
\nonumber
    \cL^\dagger(f_t)
    = &    
    \{H,\psi r_t\}
    +
        \frac{1}{2}
    \bra
        G, \d_p^2 (\psi r_t)
    \ket_\rF    \\
\label{fdotpsir} 
    & + 
    \frac{\beta}{2}
    (
    p^\rT M^{-1} G \d_p (\psi r_t)
    +
        \bra
        G, M^{-1}
    \ket_\rF 
    \psi r_t
    ) 
\end{align}
in view of (\ref{fdotbet})
regardless of a  particular structure of the function $\psi$.   
The derivation property \cite{A_1989} of the Poisson bracket (\ref{Poiss}) leads to 
\begin{align}
\nonumber
    \frac{1}{\psi}
    \{H,\psi r_t\}
    & =
    \frac{1}{\psi}
    (\{H,\psi\} r_t + \psi \{H,r_t\})\\
\label{Hrpsi}
    & =
    \{H,\lambda\}r_t + \{H,r_t\} 
\end{align}
since $\frac{1}{\psi}\{H,\psi\} = \{H,\lambda \}$,  
where $\lambda: S\x \mR^n\to \mR$ is an auxiliary function given by 
\begin{equation}
\label{lnpsi}
  \lambda
  := 
  \ln \psi . 
\end{equation}
Furthermore, by using the logarithmic gradient $\frac{1}{\psi} \d_p \psi = \d_p \lambda$ of the function $\psi$ over the momentum variables, it follows that 
\begin{align}
\label{dpsir}
    \frac{1}{\psi}\d_p (\psi r_t) 
    & = r_t \d_p \lambda  + \d_p r_t,\\
\label{ddpsir}
    \frac{1}{\psi}
    \d_p^2 (\psi r_t) 
    & = \frac{r_t}{\psi}\d_p^2 \psi  + \d_p\lambda \d_p r_t^\rT + \d_pr_t \d_p \lambda^\rT + \d_p^2 r_t, 
\end{align}
where
\begin{equation}
\label{ddpsir1}
    \frac{1}{\psi}\d_p^2 \psi
    =
    \d_p^2 \lambda + \d_p \lambda \d_p \lambda^\rT.   
\end{equation}
Substitution of (\ref{Hrpsi}) and (\ref{dpsir})--(\ref{ddpsir1}) into the right-hand side of (\ref{fdotpsir}) leads to 
\begin{align}
\nonumber
    \frac{1}{\psi}\cL^\dagger&(f_t)
    =       
    \{H,\lambda\} r_t + \{H,r_t\}\\
\nonumber
    & +
        \frac{1}{2}
    \Bra
        G, 
        \frac{r_t}{\psi} \d_p^2\psi
          + \d_p\lambda \d_p r_t^\rT + \d_pr_t \d_p \lambda^\rT+ \d_p^2 r_t
    \Ket_\rF    \\
\nonumber
    & + 
    \frac{\beta}{2}
    (
    p^\rT M^{-1} G (r_t \d_p \lambda  + \d_p r_t)
    +
        \bra
        G, M^{-1}
    \ket_\rF 
    r_t
    )\\
\nonumber
     = &     
    \{H,\lambda\} r_t + \{H,r_t\}
    + 
    \frac{1}{2}
    \bra G, \d_p^2 r_t\ket_\rF \\
\nonumber
    & +
    \Big(\d_p \lambda+\frac{\beta}{2}M^{-1}p\Big)^\rT G \d_p r_t
    \\
\nonumber
     & +
        \frac{1}{2}
    \Bra
        G, 
        \frac{1}{\psi} \d_p^2\psi
        +\beta(I_n + \d_p\lambda p^\rT) M^{-1}
    \Ket_\rF r_t   \\
\nonumber
     = &     
    \{H,\lambda\} r_t + \{H,r_t\}
    + 
    \frac{1}{2}
    \bra G, \d_p^2 r_t\ket_\rF \\
\nonumber
    & +
    \Big(\d_p \lambda+\frac{\beta}{2}M^{-1}p\Big)^\rT G \d_p r_t
    \\    
\label{fdot/psi}
     & +
        \frac{1}{2}
    \Bra
        G, 
        \d_p^2 \lambda + \d_p \lambda \d_p \lambda^\rT
        +\beta(I_n + \d_p\lambda p^\rT) M^{-1}
    \Ket_\rF r_t   . 
\end{align}
We will now take into account the special structure of $\psi$ in (\ref{psi}) as a smooth function of the Hamiltonian $H$. This property is inherited by the function $\lambda$ in (\ref{lnpsi}) which takes the form 
\begin{equation}
\label{lamH}
  \lambda = -\frac{1}{2}(\ln \Pi(\beta) + \beta H).  
\end{equation}
Since any such function commutes with $H$ in the sense of the Poisson bracket \cite{A_1989}, then  
\begin{equation}
\label{Hlam}
  \{H,\lambda \} =0. 
\end{equation}
The gradient and the Hessian matrix of the function (\ref{lamH}) over the momentum variables are found by using (\ref{qdot}), (\ref{dHdp}), (\ref{Hpp}) as
\begin{align}
\label{lamp}
    \d_p \lambda 
    & = 
    -\frac{\beta}{2} \d_p H = -\frac{\beta}{2} M^{-1} p,\\
\label{lampp}
    \d_p^2 \lambda 
    & = -\frac{\beta}{2} \d_p^2 H = -\frac{\beta}{2} M^{-1}. 
\end{align}
Substitution of (\ref{Hlam})--(\ref{lampp}) into the right-hand side of (\ref{fdot/psi}) allows the time derivative in (\ref{rdot}) to be represented in the form
\begin{align}
\nonumber
    \d_t r_t 
    = &       
    \{H,r_t\}
    + 
    \frac{1}{2}
    \bra G, \d_p^2 r_t\ket_\rF\\
\nonumber
     & +
        \frac{\beta}{4}
    \Bra
        G, 
        M^{-1}  -\frac{\beta}{2} M^{-1} p p^\rT M^{-1}
    \Ket_\rF 
    r_t\\
\label{rdot1}
    = &
    \{H,r_t\}
    +
    \frac{1}{2}
    (\Lambda r_t - \cA(r_t)).         
\end{align}
Here, use is made of an auxiliary function $\Lambda: S \to \mR_+$ given by 
\begin{equation}
\label{Lambda}
  \Lambda(q):=         
  \frac{\beta}{2}
  \bra
            G(q), M(q)^{-1}
        \ket_\rF.  
\end{equation}
Also, $\cA$ in (\ref{rdot1}) is a self-adjoint operator acting on a function $\varphi \in C^2(S\x \mR^n, \mR)$ as 
\begin{equation}
\label{cA}
  \cA (\varphi)(x)
  := 
  \frac{\beta^2}{4}
  \|M(q)^{-1}  p\|_{G(q)}^2
  \varphi
  -
  \bra
    G(q),
    \d_p^2\varphi
  \ket_\rF. 
\end{equation}
Therefore, the relation (\ref{rdot1}) can be represented in the form
\begin{equation}
\label{rdot2}
    \d_t r_t = 
    (
        \ad_H - \cB
        )
        (r_t). 
\end{equation}
Its right-hand side involves the skew self-adjoint operator $\ad_H(\cdot) := \{H,\cdot\}$ in the sense of the space $L^2(S\x \mR^n, \mR)$  and a self-adjoint operator $\cB$,  which is associated with (\ref{Lambda}), (\ref{cA}) and acts as
\begin{equation}
\label{cB}
  \cB(\varphi) := \frac{1}{2}(\cA(\varphi)-\Lambda \varphi). 
\end{equation}
The operator $\ad_H = -\ad_H^\dagger$ 
does not contribute to the quadratic form 
\begin{equation}
\label{Kdot1}
    \dot{K}(t)
    =   
    2
    \overbrace{\bra
        r_t,
        \ad_H(r_t)
    \ket}^0
    -
    2
    \bra
        r_t,
        \cB(r_t)
    \ket
    =
    -2 
    \bra
        r_t,
        \cB(r_t)
    \ket    
\end{equation}
obtained by combining (\ref{rdot2}) with (\ref{Kdot}). 
Note that for any fixed but otherwise arbitrary position $q\in S$, the operator $\cA = \cA_q$ in (\ref{cA})   acts  over the momentum variables $p_1, \ldots, p_n$. By regarding the latter as spatial  coordinates in $\mR^n$, it can be seen that the operator $\cA_q$ is isomorphic to the Hamiltonian of a multimode quantum harmonic oscillator \cite{S_1994} with the stiffness matrix $\frac{1}{2} \beta^2 M(q)^{-1}G(q)M(q)^{-1}$ and mass matrix $\frac{1}{2} G(q)^{-1}$ (see Appendix~\ref{sec:ground}). By the properties of such Hamiltonians \cite{RS_1978}, the  operator $\cA_q$ has a purely discrete spectrum,  with (\ref{Lambda}) being its lowest eigenvalue from (\ref{cAeig}):
\begin{equation}
\label{ground}
    \lambda_{\min}(\cA_q) 
    = 
    \Lambda(q). 
\end{equation}
The corresponding eigenspace of $\cA_q$  is one-dimensional and is spanned by the function $\sqrt{h_*(\cdot\mid q)}$ (as the ground state wave function for the oscillator) associated with the invariant conditional momentum PDF (\ref{h*}). This eigenspace is the null space $\ker \cB_q$  of the corresponding positive semi-definite operator $\cB = \cB_q$ in (\ref{cB}). The latter  also has a purely discrete spectrum and satisfies $\lambda_{\min}(\cB_q) = 0$ in view of (\ref{ground}), with 
\begin{equation}
\label{kercB}
  \cB_q(\sqrt{h_*(\cdot\mid q)}) = 0.   
\end{equation}
Also note that the operator $\cB$ commutes with the multiplication by a momentum-independent function:
\begin{equation}
\label{cBsig}
    \cB(\sigma\varphi) = \sigma \cB(\varphi)
\end{equation}
for any function $S \ni q \mapsto \sigma(q) \in \mR$ of position variables. Therefore, 
in view of the factorisation of $r_t$ into $\kappa_t$, $\rho_t$  in (\ref{r}), it follows from (\ref{ground}), (\ref{cBsig}) that (\ref{Kdot1}) admits the representation
\begin{align}
\nonumber
    \dot{K}(t)
    = &
    -2\bra
        \kappa_t\rho_t,
        \cB(\kappa_t\rho_t)
    \ket
    =
    -2\bra
        \kappa_t\rho_t,
        \kappa_t\cB(\rho_t)
    \ket
    \\
\label{Kdot2}
    = & 
    -2\int_S
    \kappa_t(q)^2
    \bra
        \rho_t(\cdot\mid q),
        \cB_q(\rho_t(\cdot\mid q))
    \ket
    \rd q
    \< 0,   
\end{align}
which establishes (\ref{Kdot0}). 
The above mentioned structure of the ground eigenspace of the operator $\cA_q$ (or, equivalently, $\ker \cB_q$) implies that (\ref{Kdot2}) holds as an equality if and only if there exists a $p$-independent scalar function $S \ni q\mapsto \sigma(q)>0$ such that $\rho_t(p\mid q) = \sigma(q)\sqrt{h_*(p\mid q)}$ for all $q\in S$ and $p \in \mR^n$. The latter, in view of (\ref{rho}),  is equivalent to  $\sigma$ being identically equal to 1, so that $h_t = h_*$, thus proving the second statement of the theorem. 
\end{proof}

The nonstrict inequality (\ref{Kdot0})  makes the $\chi^2$-divergence $K(t)$   applicable as a Lyapunov functional for the convergence of the PDF  $f_t$ to $f_*$, as $t\to +\infty$. However, as also established in Theorem~\ref{th:Kdot}, the strict version $\dot{K}(t) < 0$ of this inequality does not hold at those moments of time when $h_t = h_*$. Such  ``breaks'' in the dissipation of $K$ are closely related to an interplay between its position and momentum components $K_1$, $K_2$ in (\ref{KKK})--(\ref{K2}),  which will be discussed in Section~\ref{sec:break}. 

Also note that, in contrast to $K$, the functions $K_1$, $K_2$ are not monotonic with respect to  time.  In particular,  the time derivative of $K_1$, computed below for subsequent purposes,  is organised as a bilinear (rather than semi-definite quadratic) form. 

\begin{lem}
\label{lem:cGdot}
Under the assumptions of Theorem~\ref{th:Kdot}, the position component (\ref{K1}) of the $\chi^2$-divergence (\ref{chi2}) satisfies
\begin{equation}
\label{K1dot}
  \dot{K}_1(t)
  =
  2\bE (\gamma_t(Q_t)^\rT  M(Q_t)^{-1}\d_q\xi_t(Q_t) ), 
\end{equation}
where $\gamma_t$ is the conditional momentum  mean from (\ref{EPQ}), and  $\xi_t$ is the position PDF ratio (\ref{xi}).  
\end{lem}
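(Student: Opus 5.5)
We compute $\dot K_1$ directly from the representation $K_1(t)=\int_S g_t^2/g_*\,\rd q - 1$, which follows from (\ref{K1}) and (\ref{kappa}). The time derivative of $g_t$ is supplied by the continuity equation (\ref{gdot}). Differentiating under the integral, justified by (\ref{kappaCinf}) and the Schwartz-class assumption (\ref{fgood}), gives
\begin{equation*}
    \dot K_1(t) = 2\int_S \frac{g_t(q)}{g_*(q)}\,\d_t g_t(q)\,\rd q
    = -2\int_S \xi_t(q)\,\div_q\big(g_t(q)M(q)^{-1}\gamma_t(q)\big)\rd q,
\end{equation*}
where $\xi_t$ is the ratio (\ref{xi}).

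The next step is integration by parts over the position space, which moves the divergence onto $\xi_t$:
\begin{equation*}
    \dot K_1(t) = 2\int_S g_t(q)\,\gamma_t(q)^\rT M(q)^{-1}\d_q\xi_t(q)\,\rd q .
\end{equation*}
Here the symmetry of $M$ is used. Since $g_t$ is the PDF of $Q_t$, the right-hand side equals $2\bE(\gamma_t(Q_t)^\rT M(Q_t)^{-1}\d_q\xi_t(Q_t))$, which is (\ref{K1dot}). Equivalently, the integration by parts can be phrased through the expectation identity (\ref{exparts}) from the appendix, as in the proof of Theorem~\ref{th:gamdot}.

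The main obstacle is justifying that the boundary terms vanish. On the $\mT$-coordinates they cancel by periodicity. On the Euclidean coordinates one needs the product $\xi_t g_t M^{-1}\gamma_t = \kappa_t^2 M^{-1}\gamma_t$ to decay at infinity, together with integrability of $g_t\,|\gamma_t^\rT M^{-1}\d_q\xi_t|$. I would obtain this from $g_t\gamma_t=\int_{\mR^n} p\,f_t\,\rd p$ being rapidly decreasing by (\ref{fgood}), combined with the growth of $1/g_*\propto \re^{\beta V}/\sqrt{\det M}$ from (\ref{g*}). This growth is controlled by square integrability of $r_t$ and $\d_t r_t$ via (\ref{rCinf}), (\ref{kr}), and Cauchy--Schwarz.

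If a pointwise decay argument proves awkward, the fallback is a cutoff argument. One multiplies by compactly supported test functions approximating $1$, integrates by parts exactly, and then passes to the limit by dominated convergence using these $L^2$ bounds.
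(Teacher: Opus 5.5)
Your proposal is correct and follows the paper's proof essentially verbatim. The paper writes $\dot K_1 = 2\langle\kappa_t,\d_t\kappa_t\rangle = 2\langle \xi_t, \d_t g_t\rangle$, substitutes the continuity equation (\ref{gdot}), and integrates by parts via $\xi_t\d_t g_t = -\div_q(\xi_t g_t M^{-1}\gamma_t) + g_t\gamma_t^\rT M^{-1}\d_q\xi_t$, relying on its standing regularity assumptions without analysing boundary terms; your extra decay and cutoff discussion goes beyond the paper but is only sketched, since Schwartz decay of $g_t$ alone need not dominate the growth of $1/g_*$.
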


\begin{proof}
In view of (\ref{kappaCinf}), the time derivative of (\ref{K1}) can be computed similarly to (\ref{Kdot}) as
\begin{equation}
\label{K1dotkappa}
    \dot{K}_1(t)
    =
    2
    \bra 
        \kappa_t, 
        \d_t \kappa_t
    \ket 
    =
    2
    \bra 
        \xi_t , 
        \d_t g_t
    \ket 
\end{equation}
using the identity $
    \kappa_t \d_t \kappa_t =         
    \frac{g_t}{\sqrt{g_*}}  
    \frac{\d_t g_t}{\sqrt{g_*}}  = \xi_t \d_t g_t
$ which follows from the definitions  of $\kappa_t$, $\xi_t$ in (\ref{kappa}), 
(\ref{xi}). In view of the PDE (\ref{gdot}), 
\begin{align*}
    \xi_t \d_t g_t 
    & = 
    - \xi_t \div_q(g_t M^{-1}\gamma_t)\\
    & =
    -\div_q(\xi_t g_t M^{-1}\gamma_t)
    +
    g_t \gamma_t^\rT M^{-1}\d_q \xi_t .  
\end{align*}
Hence, integration by parts on the right-hand side of (\ref{K1dotkappa}) leads to 
$$
    \dot{K}_1(t) 
    =
    2
    \int_S
    g_t(q) \gamma_t(q)^\rT M(q)^{-1}\d_q \xi_t(q)
    \rd q, 
$$
which is identical to (\ref{K1dot}) since $g_t$ is the PDF of the position vector $Q_t$. 
\end{proof}

\section{$\chi^2$-Divergence Dissipation Break Times}
\label{sec:break}

As mentioned above, the (nonstrict) monotonicity of the $\chi^2$-divergence $K$ in (\ref{chi2}), or Renyi's  relative entropy (\ref{bR}),  as a function of time, which is established in Theorem~\ref{th:Kdot}, is not shared by its position and momentum  components $K_1$, $K_2$ in (\ref{KKK})--(\ref{K2}). Nevertheless, it makes them  affect each other. In particular, since
\begin{equation}
\label{KKKdot}
    \dot{K}_1 + \dot{K}_2 = \dot{K} \< 0,
\end{equation}
then the  time derivatives $\dot{K}_1$, $\dot{K}_2$ cannot be simultaneously positive. Hence,  
an increase in one of the quantities $K_1$, $K_2$ (for example, $\dot{K}_1>0$) causes a decrease in the other (respectively, $\dot{K}_2< 0$) as if there were an exchange of Renyi's relative entropy between them. 
This ``waterbed'' phenomenon is particularly noticeable (and manifests itself through higher-order time derivatives of $K$, $K_1$, $K_2$)  in the vicinity of those moments of time when the $\chi^2$-divergence dissipation rate vanishes. 
The presence of such \emph{break times}, which, in view of Theorem~\ref{th:Kdot}, form the set 
\begin{align}
\nonumber
    \fS
    & :=
    \{t>0:\ \dot{K}(t)=0\}\\
\nonumber
    & = 
    \{t>0:\ h_t=h_*\ {\rm everywhere\ in}\ S\x \mR^n\}\\
\label{fS}
    & = 
    \{t>0:\ K_2(t)=0\}, 
\end{align}
does not allow $K$ to be a strict Lyapunov functional (in the usual sense) for the convergence of the DSH system to its equilibrium measure. A qualitative discussion   of the local behaviour of $K$, $K_1$, $K_2$ and the PDFs $g_t$, $h_t$ in the vicinity of break times is provided below similarly to \cite[Lemma~6.3]{V_2023_AHP}.

\begin{lem}
\label{lem:fS}
Under the conditions of Theorem~\ref{th:Kdot}, at any break time from the set (\ref{fS}),  the functions $K$, $K_1$, $K_2$ in (\ref{chi2}), (\ref{K1}), (\ref{K2}) satisfy
\begin{align}
\label{FFsign}
    \ddot{K}(t)
    & = 0,
    \quad
    \dddot{K}(t)\< 0,\\
\label{GGsign}
    \dot{K}_1(t)
    & = 0,
    \quad
     \ddot{K}_1(t)=
    -\ddot{K}_2(t) \<  0, \\
\label{HHsign}
    \dot{K}_2(t) & = 0,
    \quad
    \ddot{K}_2(t) \> 0,
    \qquad
    t \in \fS.
\end{align}
\end{lem}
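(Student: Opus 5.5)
The plan is to combine the extremal properties of $K$ and $K_2$ at a break time with the explicit dissipation formulas from Theorem~\ref{th:Kdot} and Lemma~\ref{lem:cGdot}.

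\textbf{Step 1: first-order relations.} Fix $t\in\fS$, so that $h_t=h_*$ and hence $K_2(t)=0$ by (\ref{fS}). Since $K_2\>0$ for all times and is smooth (by (\ref{rCinf}), (\ref{kappaCinf}) together with (\ref{KKK})), the point $t$ is a minimum of $K_2$. Therefore $\dot K_2(t)=0$ and $\ddot K_2(t)\>0$, which gives (\ref{HHsign}). Theorem~\ref{th:Kdot} gives $\dot K(t)=0$. Then (\ref{KKKdot}) yields $\dot K_1(t)=0$. This can also be seen directly from (\ref{K1dot}), because $h_t=h_*$ makes $\gamma_t=0$ (the mean of $h_*$ vanishes).

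\textbf{Step 2: second-order relations for $K$.} The function $\dot K$ is smooth, nonpositive, and vanishes at $t$, so $t$ is a maximum of $\dot K$. Hence $\ddot K(t)=0$. Moreover $\dddot K(t)\<0$: if it were positive, $\dot K$ would have a strict local minimum equal to $0$ at $t$, and so would be positive nearby, which contradicts (\ref{Kdot0}). This gives (\ref{FFsign}). Differentiating (\ref{KKK}) twice then gives $\ddot K_1(t)=-\ddot K_2(t)$, and combined with Step 1 this yields $\ddot K_1(t)\<0$. That completes (\ref{GGsign}).

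\textbf{Step 3: an independent check of $\ddot K_1\<0$.} As a cross-check, I would differentiate (\ref{K1dot}) at $t$. Since $\gamma_t=0$ there, only the term $2\bE(\d_t\gamma_t(Q_t)^\rT M^{-1}\d_q\xi_t(Q_t))$ survives. From (\ref{gamdot2}) with $\gamma_t=0$ and $\Sigma_t=\cT M$, one gets $\d_t\gamma_t = -\cT M\,\d_q\ln\xi_t$, after the terms $-\nabla V$, $\div_q(\cT I_n)$ and the centrifugal term cancel against $\d_q\ln g_*$ from (\ref{g*}). This gives
$$\ddot K_1(t)=-2\cT\,\bE\big(|\d_q\xi_t(Q_t)|^2/\xi_t(Q_t)\big)\<0.$$
This identity is the step most likely to be an obstacle, because of the bookkeeping in the cancellation: one must use $\d_{q_k}\ln\det M=\Tr(M^{-1}\d_{q_k}M)$ together with (\ref{Mk}). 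However, the sign conclusions themselves follow from the soft argument in Steps 1 and 2 alone, so that is the route I would present, with Step 3 kept as a remark.
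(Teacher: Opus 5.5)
Your Steps 1--2 are correct and follow the paper's own argument. At $t\in\fS$, $\dot K$ attains its global maximum $0$ and $K_2$ attains its global minimum $0$, so the necessary extremum conditions give (\ref{FFsign}) and (\ref{HHsign}), and (\ref{GGsign}) then follows from $K=K_1+K_2$.

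Your Step~3 remark contains a slip, although the sign conclusion is unaffected. Since $\Sigma_tM^{-1}=\cT I_n$ at a break time, (\ref{gamdot2}) gives $\d_t\gamma_t=-\cT\,\d_q\ln\xi_t$, with no factor $M$. The correct identity is therefore $\ddot K_1(t)=-2\cT\,\bE_*(\|\d_q\xi_t\|_{M^{-1}}^2)$, as in (\ref{K1..}), not the Euclidean-norm version you wrote.
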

\begin{proof}
At any break time $t\in \fS$ from (\ref{fS}), the function $\dot{K}$ (which is nonpositive everywhere  by (\ref{Kdot0}) of Theorem~\ref{th:Kdot})   achieves its global maximum  value $0$. This implies (\ref{FFsign}) for the first two time derivatives of $\dot{K}$ as the first and second-order necessary conditions of the maximum. 
Furthermore, (\ref{fS}) implies that  for any $t \in \fS$, the nonnegative function $K_2$ (which inherits smoothness from $K$, $K_1$ in view of (\ref{KKK})) achieves its global minimum value $0$, whereby its first two time derivatives satisfy (\ref{HHsign}) as the first and second-order necessary conditions of the minimum. For any $t \in \fS$, the inequality in (\ref{KKKdot}) becomes  an equality,  and hence, $\dot{K}_1 = - \dot{K}_2 = 0$ in view of the equality in (\ref{HHsign}). By a similar reasoning, the relation $\ddot{K}_1 = \ddot{K}- \ddot{K}_2$, which follows from (\ref{KKK}), leads to $\ddot{K}_1(t) = -\ddot{K}_2(t)\< 0$ in (\ref{GGsign}) for any $t\in \fS$ due to the equality in (\ref{FFsign}) combined with the inequality from (\ref{HHsign}).
\end{proof}

Since at every break time $t \in \fS$ from (\ref{fS}), the conditional momentum PDF $h_t$ coincides with its invariant counterpart $h_*$ in (\ref{h*}), then  the corresponding conditional momentum mean and covariance matrix from (\ref{EPQ}), (\ref{covPQ}) acquire the form
\begin{equation}
\label{EcovPQ*}
  \gamma_t(q) =0,
  \qquad
  \Sigma_t(q) = \cT M(q),
  \qquad
  t \in \fS,\
  q \in S. 
\end{equation}
In particular, the fulfillment of the property $\gamma_t=0$  at any such time makes $\d_t g_t$ for the position PDF $g_t$ in (\ref{gdot}) also vanish:  
\begin{equation}
\label{gdot*}
  \d_t g_t = 0,
  \qquad
  t \in \fS. 
\end{equation}
Moreover, for any $t\in \fS$, not only $\dot{K}(t) = 0$ (as obtained in Theorem~\ref{th:Kdot}), but also the mean energy dissipation rate vanishes:
\begin{align*}
\nonumber
    \frac{\rd }{\rd t}\bE H(X_t)
      = &
    \bE 
    \Big(
    \frac{1}{2}
    \bra
        G(Q_t),
        M(Q_t)^{-1}
    \ket_\rF  \\
\nonumber
    & -
    \|M(Q_t)^{-1}P_t\|_{F(Q_t)}^2
    \Big)\\
\nonumber
      = &
    \bE 
    \Big(
    \frac{1}{2}
    \bra
        G(Q_t),
        M(Q_t)^{-1}
    \ket_\rF  \\
\nonumber
    & -
    \bra
        F(Q_t), 
        M(Q_t)^{-1} 
        \underbrace{\bE (P_tP_t \mid Q_t)}_{\cT M(Q_t)}
        M(Q_t)^{-1} 
    \ket_\rF
    \Big)\\ 
\label{EHdot2}
    = & 
    \bE
    \Bra
        \frac{1}{2}
        G(Q_t)
        -
        \cT F(Q_t), 
        M(Q_t)^{-1} 
    \Ket_\rF
    =0 
\end{align*}
in view of (\ref{Qdot}), (\ref{EHdot1}), (\ref{FG}) and both equalities in (\ref{EcovPQ*}).  Nevertheless, as discussed below, at any break time $t \in \fS$,  the PDF $f_t$ of the position-momentum vector $X_t$ keeps evolving unless $g_t= g_*$.

\begin{lem}
\label{lem:rdot*}
Under the conditions of Theorem~\ref{th:Kdot}, at any break time in (\ref{fS}),  the time derivative of the function (\ref{r}) takes the form 
\begin{equation}
\label{rdot*}
  \d_t r_t(x) 
  =
  -\psi(x)
  p^\rT M(q)^{-1} \d_q \xi_t(q),
  \qquad
  t \in \fS, 
\end{equation}
for all $q\in S$, $p\in \mR^n$ and $x \in S\x \mR^n$ from (\ref{xqp}),  with $\psi$ the function  given by (\ref{psi}), and $\xi_t$ the position PDF ratio from (\ref{xi}). 
\end{lem}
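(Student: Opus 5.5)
We start from the representation (\ref{rdot2}) of the FPKE in terms of $r_t$, namely $\d_t r_t = \{H,r_t\} - \cB(r_t)$, obtained in the proof of Theorem~\ref{th:Kdot}. At a break time $t\in\fS$, Theorem~\ref{th:Kdot} gives $h_t = h_*$, so by (\ref{rho}) we have $\rho_t(\cdot\mid q) = \sqrt{h_*(\cdot\mid q)}$. Also, $\psi = \sqrt{g_*}\sqrt{h_*}$ by (\ref{fgh*}) and (\ref{psi}). Hence the factorisation (\ref{r}) becomes
\[
  r_t(x) = \kappa_t(q)\sqrt{h_*(p\mid q)} = \xi_t(q)\psi(x),
\]
using (\ref{kappa}) and (\ref{xi}).

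The dissipative term is disposed of directly. Since $\cB$ commutes with multiplication by position-only functions by (\ref{cBsig}), and $\cB_q$ annihilates $\sqrt{h_*(\cdot\mid q)}$ by (\ref{kercB}), we get
\[
  \cB(r_t) = \kappa_t\,\cB\bigl(\sqrt{h_*}\bigr) = 0 .
\]
Thus $\d_t r_t = \{H, \xi_t\psi\}$ at $t\in\fS$.

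Next we compute the Poisson bracket using the derivation property, as in (\ref{Hrpsi}):
\[
  \{H,\xi_t\psi\} = \xi_t\{H,\psi\} + \psi\{H,\xi_t\}.
\]
The first term vanishes because $\psi$ is a function of $H$, which is (\ref{Hlam}) in multiplicative form. For the second term, $\xi_t$ depends only on $q$, so (\ref{Poiss}) gives
\[
  \{H,\xi_t\} = \d_q H^\rT \d_p\xi_t - \d_p H^\rT \d_q \xi_t = -\dot q^\rT \d_q\xi_t = -p^\rT M^{-1}\d_q\xi_t,
\]
using (\ref{dHdp}), (\ref{qdot}) and the symmetry of $M$. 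Multiplying by $\psi$ yields (\ref{rdot*}).

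The only delicate point is the sign convention of the bracket, specifically that $\ad_H = \{H,\cdot\}$ enters (\ref{rdot2}) with a plus sign. This should be cross-checked against the Liouville part of (\ref{fdotbet}). Beyond that, the argument only requires $\kappa_t$ (equivalently $\xi_t$) to be smooth in $q$, which follows from (\ref{fsmooth}) and (\ref{gtpos}).
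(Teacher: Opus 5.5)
Your proposal is correct and follows the paper's proof essentially step for step. Both arguments annihilate $\cB(r_t)$ via (\ref{kercB}) and (\ref{cBsig}), rewrite $r_t=\xi_t\psi$, and apply the Leibniz rule for $\ad_H$ with $\ad_H(\psi)=0$. They then evaluate $\ad_H(\xi_t)=-p^\rT M^{-1}\d_q\xi_t$. The sign you flag is also right: $-\div(fJH')=\{H,f\}$ because $\div(JH')=\Tr(JH'')=0$, so $\ad_H$ does enter (\ref{rdot2}) with a plus sign.
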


\begin{proof}
As discussed in the proof of Theorem~\ref{th:Kdot}, for any break time $t \in \fS$, the function (\ref{rho}) takes the form $\rho_t = \sqrt{h_*}$, and hence, the corresponding function $r_t$ in (\ref{r}) satisfies  
\begin{equation}
\label{cBr0}
    \cB(r_t) = \kappa_t\cB(\rho_t) = \kappa_t \cB(\sqrt{h_*}) = 0,
    \qquad
    t \in \fS
\end{equation}
in  view of (\ref{kercB}), (\ref{cBsig}), so that $r_t \in \ker \cB$.  This reduces the time derivative (\ref{rdot2}) to 
\begin{align}
\nonumber
  \d_t r_t 
  & = 
  \ad_H(r_t)
  =
  \ad_H(\kappa_t \sqrt{h_*})\\
\nonumber
  & =
  \ad_H(\xi_t \psi)
  =
  \ad_H(\xi_t) \psi + \xi_t  \ad_H(\psi)\\
\label{rdot*1}
  & =
  \ad_H(\xi_t)\psi, 
  \qquad
  t \in \fS, 
\end{align}
where use is also made of the commutation property $\ad_H(\psi) = 0$ of the function $\psi = \sqrt{g_* h_*}$ from (\ref{psi}). Since $\xi_t$ in (\ref{xi}) does not depend on the momentum variables, then $\d_p \xi_t = 0$, so that 
\begin{align}
\nonumber
    \ad_H(\xi_t)
    & = 
    \d_q H^\rT \d_p \xi_t - \d_p H^\rT \d_q \xi_t \\
\label{adHxi}
    & = 
    -p^\rT M^{-1} \d_q \xi_t
\end{align}
in view of (\ref{dHdp}), (\ref{qdot}). Substitution of (\ref{adHxi}) into (\ref{rdot*1}) leads to (\ref{rdot*}). 
\end{proof}

From (\ref{rdot*}), it follows that, at any break time $t \in \fS$ (that is, when $h_t = h_*$),  the time derivative $\d_t h_t$ of the conditional momentum PDF $h_t$  vanishes everywhere in $S\x \mR^n$ if and only if $g_t=g_*$ everywhere in $S$, in which case the system state vector $X_t$ has the equilibrium PDF $f_t=f_*$.    This can be seen from the relation 
\begin{equation}
\label{rdot*2}
    \d_t r_t 
    = 
    \frac{1}{\psi}
    (h_t\d_t g_t + g_t \d_t h_t)
    =
    \frac{g_t}{\psi}
    \d_t h_t, 
    \qquad
    t \in \fS, 
\end{equation}
where (\ref{r}) is combined with the factorisation (\ref{fght}) and the property (\ref{gdot*}). Indeed, if a break time $t \in \fS$  is such that $\d_t h_t = 0$ everywhere in $S \x \mR^n$, then a combination of (\ref{rdot*}) with (\ref{rdot*2}) implies that $\d_q \xi_t = 0$ everywhere in $S$, which, by (\ref{xi}), is equivalent to $g_t=g_*$. The case of a \emph{nonequilibrium} break time $t\in \fS$ (that is, when $g_t\ne g_*$) is illustrated by Fig.~\ref{fig:gh}.  
\begin{figure}[htbp]
{\centering
\includegraphics[width=5cm]{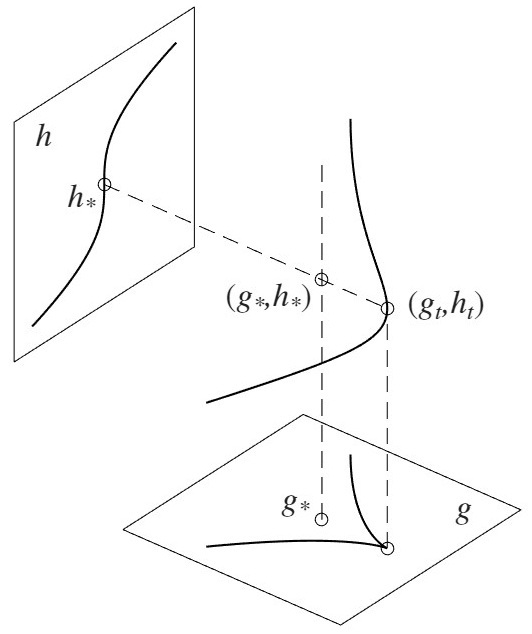}
\caption{An illustration of the local behaviour of the trajectory of the pair of the position and conditional momentum PDFs in a neighbourhood of a nonequilibrium break time $t\in \fS$,  when $h_t=h_*$ while  $g_t\ne g_*$. At any such moment of time, $\d_t g_t = 0$ according to (\ref{gdot*}), whereas $\d_t h_t\ne 0$. 
\label{fig:gh}}}
\end{figure}

The following lemma enhances the inequalities in (\ref{FFsign})--(\ref{HHsign}).

\begin{lem}
\label{lem:F...}
Under the conditions of Theorem~\ref{th:Kdot}, at any break time in (\ref{fS}),  the third and second-order  time derivatives of the $\chi^2$-divergence $K$ and its position component $K_1$  in (\ref{chi2}), (\ref{K1}) satisfy
\begin{align}
\label{K...}
  \dddot{K}(t)
  & =
  -2
  \bE_*
  (
    \|
        M^{-1}\d_q \xi_t
    \|_G^2
  ),\\
\label{K1..}
  \ddot{K}_1(t)
  & =
  -2\cT
  \bE_*
  (
    \|
        \d_q \xi_t
    \|_{M^{-1}}^2
  ),
  \qquad
  t \in \fS,
\end{align}
where $\bE_*(\cdot)$ is the expectation over the invariant position PDF $g_*$ from  (\ref{g*}), and $\xi_t$ is the position PDF ratio in (\ref{xi}). 
\end{lem}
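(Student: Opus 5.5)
For $\dddot{K}$ at a break time, I will differentiate the quadratic form (\ref{Kdot1}) twice. Since $\cB$ is time-independent and self-adjoint, $\dot K = -2\bra r_t,\cB(r_t)\ket$ gives
\[
\ddot K = -4\bra \d_t r_t,\cB(r_t)\ket,
\qquad
\dddot K = -4\bra \d_t^2 r_t,\cB(r_t)\ket - 4\bra \d_t r_t,\cB(\d_t r_t)\ket .
\]
At $t\in\fS$ we have $\cB(r_t)=0$ by (\ref{cBr0}), so $\dddot K(t) = -4\bra \d_t r_t,\cB(\d_t r_t)\ket$. Lemma~\ref{lem:rdot*} then lets us substitute $\d_t r_t = -\psi\, p^\rT M^{-1}\d_q\xi_t$.

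By (\ref{cBsig}), $\cB$ commutes with the position-dependent factor $\sqrt{g_*}\,\d_q\xi_t$. It therefore suffices to compute $\cB_q$ on $\phi_v(p):=v^\rT p\,\sqrt{h_*(p\mid q)}$ with $v:=M^{-1}\d_q\xi_t$. Here $\phi_v$ is a first excited state of the oscillator $\cA_q$ from Appendix~\ref{sec:ground}. Its eigenvalue can be found by direct calculation: $\sqrt{h_*}=c\,\re^{-\frac{\beta}{4}p^\rT M^{-1}p}$, so
\[
\d_p^2\bigl(v^\rT p\sqrt{h_*}\bigr)
= \sqrt{h_*}\Bigl(-\tfrac{\beta}{2}\bigl(vp^\rT M^{-1}+M^{-1}pv^\rT\bigr)
+ v^\rT p\bigl(-\tfrac{\beta}{2}M^{-1}+\tfrac{\beta^2}{4}M^{-1}pp^\rT M^{-1}\bigr)\Bigr).
\]
Pairing with $G$ and inserting into (\ref{cA}) cancels the quadratic potential term and the $\Lambda$ term, leaving
\[
\cB_q(\phi_v) = \tfrac{\beta}{2}\, p^\rT M^{-1}Gv\,\sqrt{h_*}.
\]
This is not a multiple of $\phi_v$ unless $v$ is an eigenvector of $M^{-1}G$, but only the quadratic form is needed. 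Using $\int p p^\rT h_*\rd p=\cT M$,
\[
\bra \phi_v,\cB_q\phi_v\ket = \tfrac{\beta}{2}\,v^\rT(\cT M)M^{-1}Gv = \tfrac12\|v\|_G^2 .
\]
Multiplying by $g_*$, integrating over $S$ and including the factor $-4$ gives $\dddot K = -2\bE_*\|M^{-1}\d_q\xi_t\|_G^2$, which is (\ref{K...}).

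For $\ddot K_1$, I differentiate $\dot K_1 = 2\bra\kappa_t,\d_t\kappa_t\ket$ to get $\ddot K_1 = 2\|\d_t\kappa_t\|^2 + 2\bra\kappa_t,\d_t^2\kappa_t\ket$. At a break time $\d_t g_t=0$ by (\ref{gdot*}), so $\d_t\kappa_t=0$ and $\ddot K_1 = 2\bra\xi_t,\d_t^2 g_t\ket$. Differentiating (\ref{gdot}) gives $\d_t^2 g_t = -\div_q(g_tM^{-1}\d_t\gamma_t)$, since the term $\d_t g_t\,\gamma_t$ vanishes with $\gamma_t=0$. Then $\d_t\gamma_t$ comes from (\ref{gamdot2}) evaluated with (\ref{EcovPQ*}). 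The key check is that, with $\Sigma_t=\cT M$, $\gamma_t=0$ and $g_t=\xi_t g_*$, the terms $\div_q(\cT I)=0$, $\cT\d_q\ln g_*$, $\nabla V$ and $\tfrac{\cT}{2}(\bra M_k,M\ket_\rF)_k$ cancel. This uses $\d_q\ln g_* = -\beta\nabla V+\tfrac12\d_q\ln\det M$ and $\bra M_k,M\ket_\rF=\Tr(M^{-1}\d_{q_k}M)=\d_{q_k}\ln\det M$, which leaves $\d_t\gamma_t=-\cT\d_q\ln\xi_t$. Alternatively, $\d_t\gamma_t$ follows directly from (\ref{kr})-type integration of (\ref{rdot*}) against $p\sqrt{h_*}$. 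Integrating by parts then gives
\[
\ddot K_1 = 2\int g_t(\d_q\xi_t)^\rT M^{-1}\d_t\gamma_t\rd q
= -2\cT\int g_*\|\d_q\xi_t\|^2_{M^{-1}}\rd q,
\]
which is (\ref{K1..}).

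The main obstacle is the bookkeeping of the oscillator computation $\cB_q(\phi_v)$, together with confirming that the $\Lambda$-shift and the potential term cancel exactly.
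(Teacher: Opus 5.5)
Your proof is correct and follows the paper's argument. For $\dddot K$ you reduce to $\dddot K(t) = -4\bra \d_t r_t, \cB(\d_t r_t)\ket$ at break times, insert Lemma~\ref{lem:rdot*}, evaluate $\cB$ on a $p$-linear multiple of $\sqrt{h_*}$ and integrate with the conditional covariance $\cT M$. For $\ddot K_1$ you use $\d_t\kappa_t=0$, $\d_t^2 g_t = -\div_q(g_t M^{-1}\d_t\gamma_t)$ and $\d_t\gamma_t=-\cT\d_q\ln\xi_t$ exactly as in (\ref{gamdot*1}). The one minor deviation is that you obtain (\ref{Kdddot1}) by differentiating $\dot K = -2\bra r_t,\cB(r_t)\ket$ directly, which is slightly shorter than the paper's expansion of $2\bra r_t,\d_t r_t\ket$ through $(\ad_H-\cB)^k$.
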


\begin{proof}
Repeated differentiation of (\ref{Kdot}) in time leads to 
\begin{align}
\nonumber
    \dddot{K}(t) 
    & = 
    2\frac{\rd }{\rd t}
    (\|\d_t r_t\|^2 + \bra r_t, \d_t^2 r_t\ket)\\
\label{Kdddot}
    & = 
    2
    (3 \bra \d_t r_t, \d_t^2 r_t\ket + \bra r_t, \d_t^3 r_t\ket).  
\end{align}
By (\ref{rdot2}), the time derivatives of the function $r_t$ from (\ref{r}) is represented in terms of the Poisson bracket $\ad_H$ and the operator $\cB$ in (\ref{cB}) as 
\begin{equation}
\label{rdotk}
    \d_t^k r_t = 
    (
        \ad_H - \cB
    )^k
        (r_t) ,
        \qquad
        k =1, 2, 3, \ldots.
\end{equation}
At any break time, the function $r_t$ satisfies (\ref{cBr0}), which reduces (\ref{rdotk}) to   
\begin{equation}
\label{rdotk*}
    \d_t^k r_t = 
    (
        \ad_H - \cB
     )^{k-1}
        (\ad_H(r_t)), 
        \qquad
        t \in \fS . 
\end{equation}
This allows the first inner product on the right-hand side of (\ref{Kdddot}) to be computed as 
\begin{align}
\nonumber   
    \bra \d_t r_t, \d_t^2 r_t\ket
    & =   
    \bra 
        \d_t r_t, 
            (
        \ad_H - \cB
        )(\d_t r_t)
    \ket\\
\nonumber
     & =  
    \underbrace{\bra 
        \d_t r_t, 
        \ad_H(\d_t r_t)
    \ket}_0
    -
    \bra 
        \d_t r_t, 
            \cB(\d_t r_t)
    \ket    \\
\label{prod1}
    & = 
    -
    \bra 
        \d_t r_t, 
            \cB(\d_t r_t)
    \ket,     
    \qquad
    t \in \fS, 
\end{align}
where the skew self-adjointness $\ad_H = -\ad_H^\dagger$ is also used. By a similar reasoning, a combination of (\ref{rdotk*}) with the self-adjointness of the operator $\cB$ leads to the following representation of the second inner product on the right-hand side of (\ref{Kdddot}):
\begin{align}
\nonumber   
    \bra r_t, \d_t^3 r_t\ket
    & =   
    \bra 
        r_t, 
        (
            \ad_H - \cB
        )^2(\d_t r_t)
    \ket\\
\nonumber
    & = 
    -
    \bra 
        (
            \ad_H + \cB
        )(r_t), 
        (
            \ad_H - \cB
        )(\d_t r_t))
    \ket\\    
\nonumber
    & = 
    -
    \bra 
            \d_t r_t, 
        (
            \ad_H - \cB
        )(\d_t r_t)
    \ket\\        
\label{prod2}
    & =  
    \bra 
        \d_t r_t, 
            \cB(\d_t r_t)
    \ket,     
    \qquad
    t \in \fS.  
\end{align}
Therefore, substitution of (\ref{prod1}), (\ref{prod2}) into (\ref{Kdddot}) leads to
\begin{equation}
\label{Kdddot1}
  \dddot{K}(t)
  =
    -4
    \bra 
        \d_t r_t, 
            \cB(\d_t r_t)
    \ket,     
    \qquad
    t \in \fS.    
\end{equation}
In order to evaluate the operator $\cB$ in (\ref{cB}), acting over the momentum variables,  at the function $\d_t r_t$ in (\ref{rdot*}), we note that  
\begin{align*}
    \d_p^2 \d_tr_t 
    =& 
    -M^{-1} \d_q\xi_t \d_p\psi^\rT - \d_p\psi \d_q\xi_t^\rT M^{-1}\\
    &  - 
    p^\rT M^{-1} \d_q\xi_t
    \d_p^2 \psi ,
    \qquad
    t \in \fS. 
\end{align*}
Hence, in view of (\ref{cA}), (\ref{cB}) and the fact that $p^\rT M^{-1} \d_q\xi_t$ is scalar-valued, 
\begin{align}
\nonumber
    \cB(\d_t r_t) 
    & = -\cB(\psi)p^\rT M^{-1} \d_q\xi_t + \d_q \xi_t^\rT M^{-1} G \d_p\psi\\
\nonumber
    & = 
    \d_q \xi_t^\rT M^{-1} G \d_p\psi, \\
\label{cBr*}    
    & = 
    -\frac{\beta}{2}\psi \d_q \xi_t^\rT M^{-1} G M^{-1}p,
    \qquad
    t \in \fS, 
\end{align}
since the function (\ref{psi}) satisfies $\psi \in \ker \cB$ due to (\ref{kercB}),  and $\d_p \ln \psi = \d_p \lambda = -\frac{\beta}{2}M^{-1}p$ from (\ref{lnpsi}), (\ref{lamp}). Substitution of (\ref{rdot*}), (\ref{cBr*}) into (\ref{Kdddot1}) and using the property that $\psi^2 = f_*$ yields 
\begin{align*}
    \dddot{K}(t)
    = & 
    -2\beta
    \int_{S\x \mR^n}
    f_*
    \d_q \xi_t^\rT M^{-1} G M^{-1}p
    p^\rT M^{-1} \d_q \xi_t
    \rd x\\
    = &  
    -2\beta
    \int_S
    g_*(q)
    \d_q \xi_t(q)^\rT M(q)^{-1} G(q) M(q)^{-1}\\
    & \x
    \Big(
    \underbrace{
    \int_{\mR^n}
    h_*(p\mid q)
    p
    p^\rT
    \rd p}_{\cT M(q)}
    \Big)
     M(q)^{-1} 
    \d_q \xi_t(q)
    \rd q\\
    =&
    -2
    \int_S
    g_*(q)
    \|M(q)^{-1}\d_q \xi_t(q)\|_{G(q)}^2
    \rd q,
    \qquad
    t \in \fS, 
\end{align*}
in view of (\ref{EcovPQ*}), (\ref{betau}), thus establishing (\ref{K...}). We will now prove (\ref{K1..}).  From (\ref{kappa}), (\ref{gdot*}),  it follows that $\d_t \kappa_t = \frac{1}{\sqrt{g_*}} \d_t g_t = 0$  at any break time $t \in \fS$. Hence, 
differentiation of the first equality in (\ref{K1dotkappa}) yields 
\begin{align}
\nonumber
    \ddot{K}_1(t)
    & =
    2
    (\|\d_t \kappa_t\|^2
    + 
    \bra 
        \kappa_t, 
        \d_t^2 \kappa_t
    \ket 
    )\\
\label{K1ddot}
    & =
    2
    \bra 
        \kappa_t, 
        \d_t^2 \kappa_t
    \ket 
    =
    2
    \bra 
        \xi_t, 
        \d_t^2 g_t
    \ket,  
    \qquad
    t \in \fS,      
\end{align}
where use is also made of the identity $
    \kappa_t \d_t^2 \kappa_t =         
    \frac{g_t}{\sqrt{g_*}}  
    \frac{\d_t^2 g_t}{\sqrt{g_*}}  = \xi_t \d_t^2 g_t
$ in view of (\ref{kappa}), (\ref{xi}). A combination of (\ref{gdot}) with (\ref{gdot*}) implies that 
\begin{align}
\nonumber
  \d_t^2 g_t
  & = 
  - \div_q(\d_t g_t M^{-1}\gamma_t + g_t M^{-1}\d_t \gamma_t)\\
\label{gddot*}
    & = 
    - \div_q(g_t M^{-1}\d_t \gamma_t),
    \qquad
    t \in \fS. 
\end{align}
At any break time, the relations (\ref{EcovPQ*}) reduce the PDE (\ref{gamdot2}) to 
\begin{align}
\nonumber
    \d_t \gamma_t & +
    \cT \d_q \ln g_t\\
\label{gamdot*}
     = &  
      -\nabla V
  +
  \frac{\cT}{2}
  (
  \bra
    M_k,
    M
  \ket_\rF
  )_{1\< k \< n},
  \qquad
  t \in \fS. 
\end{align}
Here, 
$
    \bra M_k, M\ket_\rF = \bra M^{-1}, \d_{q_k} M\ket_\rF = \d_{q_k} \ln\det M 
$  
in view (\ref{Mk}),  and hence, the right-hand side of (\ref{gamdot*}) is related to the invariant position PDF (\ref{g*}) by
\begin{align}
\nonumber
          -\nabla V
  & +
  \frac{\cT}{2}
  (
  \bra
    M_k,
    M
  \ket_\rF
  )_{1\< k \< n}\\
\label{right1}
  & =
  \cT
  \Big(-\beta\nabla V + \frac{1}{2}
  \d_q \ln \det M\Big)
  =
  \cT \d_q \ln g_*, 
\end{align}
where (\ref{betau}) is also used. Substitution of (\ref{right1}) into (\ref{gamdot*}) leads to 
\begin{equation}
\label{gamdot*1}
    \d_t \gamma_t 
    = 
    -\cT \d_q \ln \xi_t,
    \qquad
    t \in \fS
\end{equation}
in view of (\ref{xi}). A combination of (\ref{K1ddot}) with (\ref{gddot*}), (\ref{gamdot*1}) and integration by parts yield
\begin{align*}
    \ddot{K}_1(t)
    & = 
    2\cT
    \bra 
        \xi_t, 
        \div_q(g_t M^{-1}\d_q \ln \xi_t)
    \ket \\
    & = 
    -2\cT
    \int_S
        g_t(q)
        \d_q \xi_t(q)^\rT M(q)^{-1}\d_q \ln \xi_t(q)
        \rd q    \\
    & = 
    -2\cT
    \int_S
        g_*(q)
        \|\d_q \xi_t(q)\|_{M(q)^{-1}}^2
        \rd q, 
        \qquad
        t \in \fS 
\end{align*}
(where (\ref{xi}) is used again), which establishes (\ref{K1..}) and completes the proof.  
\end{proof}

The relations (\ref{K...}), (\ref{K1..}), which involve weighted Dirichlet quadratic forms of the position PDF ratio (\ref{xi}), show that $\dddot{K}(t)< 0$ and $\ddot{K}_1(t)< 0$ (and hence, $\ddot{K}_2(t)>0$ by the second equality in (\ref{GGsign})) at any nonequilibrium break time $t\in \fS$ (that is, when $h_t = h_*$, whereas $g_t\ne g_*$).

\section{Strict Monotonicity of $\chi^2$-Divergence}
\label{sec:strict}

The following theorem, which is a corollary from Theorem~\ref{th:Kdot} and Lemmas~\ref{lem:fS}--\ref{lem:F...} and is similar to \cite[Theorem~6.6]{V_2023_AHP},  establishes strict monotonicity for the $\chi^2$-divergence (\ref{chi2}).

\begin{thm}
\label{th:entmono}
For the DSH system (\ref{Qdot}), (\ref{dP})  with the Langevin damping (\ref{uopt}) satisfying the damping-diffusion condition  (\ref{FG}),  suppose the position-momentum PDF satisfies (\ref{fsmooth}), (\ref{fgood}), (\ref{gtpos}), (\ref{rCinf}). Then the $\chi^2$-divergence (\ref{chi2}) with respect to the invariant PDF $f_*$ in (\ref{f*}) is a strictly decreasing function of pre-equilibrium time (that is, until $f_*$ is reached). 
\end{thm}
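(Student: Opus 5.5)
Since $\dot K\<0$ for all $t\>0$ by Theorem~\ref{th:Kdot}, $K$ is nonincreasing, and it suffices to show that $K$ is not constant on any nondegenerate interval $[t_1,t_2]$ contained in the pre-equilibrium time range. Suppose the contrary: $K(t_1)=K(t_2)$ with $t_1<t_2$ and $f_t\ne f_*$ on $[t_1,t_2]$. Then $\dot K\equiv 0$ on $(t_1,t_2)$, so the whole interval lies in the break set $\fS$ of (\ref{fS}). The plan is to show that every nonequilibrium break time is isolated in $\fS$, which is incompatible with $\fS$ containing an interval.

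To obtain isolation, fix a nonequilibrium break time $t\in\fS$, so that $h_t=h_*$ while $g_t\ne g_*$. By Lemma~\ref{lem:fS}, $\dot K(t)=\ddot K(t)=0$. By Lemma~\ref{lem:F...}, $\dddot K(t)=-2\bE_*(\|M^{-1}\d_q\xi_t\|_G^2)$. Because $G,M\in C^\infty(S,\mP_n)$ and $g_*>0$, this vanishes only if $\d_q\xi_t\equiv0$. In that case $\xi_t$ is constant, and the normalisation of $g_t$ and $g_*$ forces $\xi_t\equiv1$, that is, $g_t=g_*$, contradicting nonequilibrium. Hence $\dddot K(t)<0$. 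The smoothness of $K$ in time comes from (\ref{rCinf}) via (\ref{Kdot}). A Taylor expansion then gives
$\dot K(s)=\tfrac12\dddot K(t)(s-t)^2+o((s-t)^2)<0$
for all $s\ne t$ in a punctured neighbourhood of $t$. So $t$ is an isolated point of $\fS$. Consequently $\fS$ restricted to pre-equilibrium times consists of isolated points and is countable, in particular nowhere dense. This contradicts $(t_1,t_2)\subset\fS$.

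Finally, I will remark on the endpoint: once $f_{t_0}=f_*$, invariance (\ref{cL+0}) keeps $f_t=f_*$ and $K\equiv0$ afterwards. This is why the statement is restricted to pre-equilibrium time, and it also means the nonequilibrium condition holds on the whole interval whenever it holds at its right end. I will also note the alternative supporting observation from (\ref{K1..}): $\ddot K_1(t)<0=\dot K_1(t)$ at break times, which illustrates the waterbed exchange but is not logically needed.

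The main obstacle is rigour in the step $\dddot K(t)=0\Rightarrow g_t=g_*$ and in the Taylor argument. For the first, $\xi_t$ is smooth and $S$ is connected (a product of $\mR$'s and circles), so a vanishing gradient gives a constant, and $\int_S g_t=\int_S g_*=1$ pins that constant to $1$. For the second, I only need $K\in C^3$ near $t$, which follows from (\ref{rCinf}).
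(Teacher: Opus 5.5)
Your proposal is correct and follows essentially the same route as the paper. At every pre-equilibrium break time you combine $\dot{K}(t)=\ddot{K}(t)=0$ from Lemma~\ref{lem:fS} with $\dddot{K}(t)<0$ from (\ref{K...}), conclude that such break times are isolated, and so rule out $K$ being constant on any pre-equilibrium interval. The differences are small gains in rigour: you Taylor-expand $\dot{K}$ rather than $K$ as in the cubic parabola (\ref{cubic}), which gives isolation directly; you argue by contradiction instead of integrating $\dot{K}<0$ almost everywhere; and you supply the connectedness and normalisation step $\d_q\xi_t\equiv 0\Rightarrow g_t=g_*$ that the paper leaves implicit.
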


\begin{proof}
In order to eliminate from consideration the trivial situation when the system has already reached its equilibrium PDF $f_*$, we restrict the set $\fS$ of break times  in  (\ref{fS}) to
\begin{equation}
\label{fStau}
    \fS_\tau:
    =
    (0,\tau)\bigcap \fS
    =
    \{0< t< \tau:\ \dot{K}(t)=0\}. 
\end{equation}
Here, $\tau$ is the first time when the PDF $f_t$ coincides with the invariant PDF $f_*$:
\begin{equation}
\label{tau}
    \tau
    :=
    \inf
    \{
        t \> 0:\
        f_t = f_*\
        {\rm everywhere\ in}\
        S\x\mR^n
    \}, 
\end{equation}
with the convention that $\tau:= +\infty$ if the above set is empty. From (\ref{fStau}), (\ref{tau}), it follows that  for any $t \in \fS_\tau$, the position PDF is different from its invariant counterpart: $g_t \ne g_*$.  Indeed, together with $h_t =h_*$ at any $t \in \fS$,      the equality $g_t =g_*$ would imply that $f_t=f_*$ (that is, the equilibrium is already reached) and hence,  $t \> \tau$, thus contradicting the inequality $t< \tau$.  Hence, in view of (\ref{K...}),
\begin{equation}
\label{F...neg}
    \dddot{K}(t) < 0,
    \qquad
    t \in \fS_\tau.
\end{equation}
In combination with (\ref{fS}) and the equality in (\ref{FFsign}), the strict inequality (\ref{F...neg}) implies that any such $t$ is a stationary point of inflection for the $\chi^2$-divergence $K$ as a smooth function of time. In a small neighbourhood of such a point, $K$ behaves asymptotically as a strictly decreasing cubic parabola:
\begin{equation}
\label{cubic}
    K(s)=K(t) + \frac{1}{6}\dddot{K}(t)(s-t)^3 + o(|s-t|^3),
\end{equation}
as $s \to t \in \fS_\tau$. 
Therefore, the set $\fS_\tau$ in (\ref{fStau}) consists of isolated points and is countable (hence, of zero Lebesgue measure), which, in view of
\begin{equation*}
\label{Ffot...neg}
    \dot{K}(t) < 0,
    \qquad
    t \in (0,\tau)\setminus \fS,
\end{equation*}
makes $\dot{K}$ strictly negative almost everywhere in the interval $[0,\tau]$. The latter implies that $K(t)-K(s) = \int_s^t \dot{K}(u)\rd u < 0$ for all $0\< s < t \< \tau$, and hence, $K$ is a strictly decreasing function of time over $[0,\tau]$.
\end{proof}

The strict monotonicity of the $\chi^2$-divergence $K$, proved in Theorem~\ref{th:entmono},  employs the observation that the position-momentum distribution does not ``dwell'' at those pairs $(g_t,h_t)$,  where $\dot{K}(t)=0$,  unless the system has reached the equilibrium PDF $f_*$.
 As illustrated in Fig.~\ref{fig:FGH},
\begin{figure}[htbp]
{\centering
\includegraphics[width=6 cm]{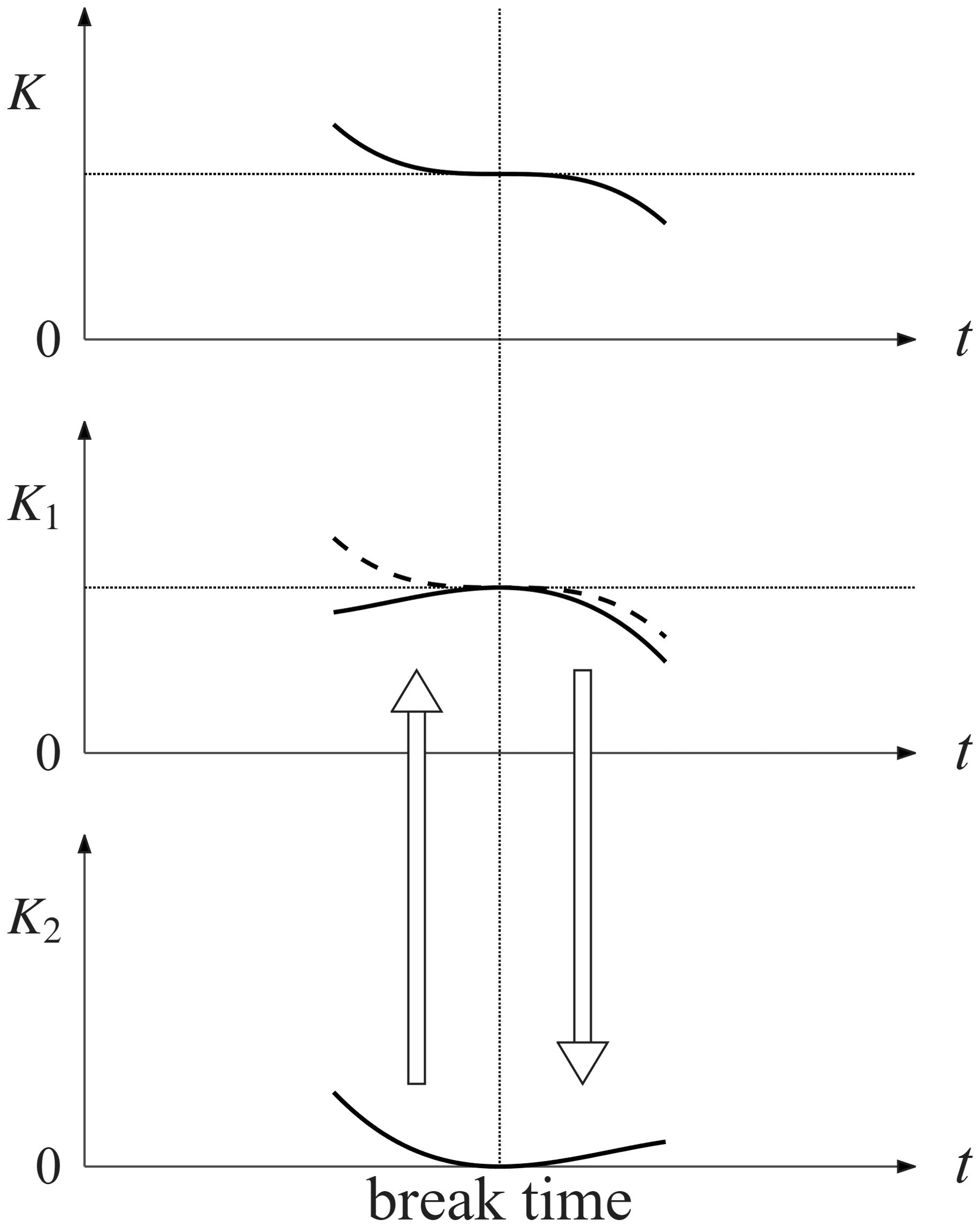}
\caption{An illustration of the local behaviour of the $\chi^2$-divergence $K$ and its position and momentum components $K_1$, $K_2$ in (\ref{chi2}), (\ref{K1}), (\ref{K2}) in the vicinity of a pre-equilibrium  break time in (\ref{fStau}). It is described asymptotically by the cubic parabola (\ref{cubic}) and two quadratic parabolas, whose nonzero leading coefficients (\ref{K...}), (\ref{K1..}) make the inequalities in 
(\ref{FFsign})--(\ref{HHsign}) strict. At any break time, $K$  and $K_1$ share a common value (the graph of $K$ is superimposed as the dashed curve on that of $K_1$); cf. (\ref{K>K1}). The white arrows indicate the direction of Renyi's relative entropy flow between the position and momentum components immediately before and after  the break time (from $K_2$ to $K_1$ and from $K_1$ to $K_2$, respectively). 
\label{fig:FGH}}}
\end{figure}
the local behaviour of $K$ and its components $K_1$, $K_2$ in the vicinity of any pre-equilibrium   break time in (\ref{fStau}) is described asymptotically by the cubic parabola (\ref{cubic}) and two (concave and convex)  quadratic parabolas, respectively.  The inequalities in (\ref{FFsign})--(\ref{HHsign}) for the leading  coefficients of these parabolas are strict  in view of (\ref{K...}), (\ref{K1..}), which can be interpreted as a local exchange between the position and momentum  components $K_1$, $K_2$, so that the increase in one of them is compensated by the decrease in the other at the level of the first and second-order terms of their local Taylor 
series expansions. Immediately before such a break, the momentum component $K_2$  flows into the position component $K_1$  until $K_2$ is  depleted at the break time,  after which the opposite flow (from $K_1$ to $K_2$) is observed over a short period of time. 

The ``waterbed''  scenario in the dissipation of Renyi's relative entropy with respect to the invariant measure, established above,  is similar to the potential-kinetic energy exchange in the total energy dissipation for deterministic dissipative Hamiltonian dynamics mentioned in Introduction. It can be interpreted as a manifestation of the BKL principle \cite{BK_1952,L_1960} in application to the $\chi^2$-divergence $K$ as a Lyapunov functional for the FPKE in the DSH setting.  A similar  behaviour was discussed for the Kullback-Leibler relative entropy case in \cite{V_2023_AHP}.

\section{Conclusion}\label{sec:conc}

For multivariable DSH systems,  described by an ODE for the position and an Ito SDE for the momentum with conservative, Langevin viscous  damping and external  random forces satisfying the damping-diffusion relation, we have applied the position-momentum conditioning from \cite{V_2023_AHP} to the $\chi^2$-divergence  with respect to the Maxwell-Boltzmann invariant measure. With the latter favouring lower energy values, the setting has been discussed from the stochastic optimisation and physical modelling viewpoints.    
We have decomposed the $\chi^2$-divergence, involving Renyi's second-order relative entropy,  into the position and momentum parts and 
shown that it has a nonpositive time derivative which vanishes only at break times when so does the momentum component.  This property has been linked to auxiliary quantum harmonic oscillator Hamiltonians parameterised by the position space of the DSH system.  Using higher-order dissipation relations, 
we have shown that all the pre-equilibrium $\chi^2$-divergence dissipation break times  are isolated  strong local maxima and minima of the position and momentum components, respectively. They compensate each other in such a way that  the break times are stationary inflection points of the $\chi^2$-divergence, making the latter (and thus, Renyi's relative entropy)  a strictly decreasing Lyapunov functional. The  position-momentum  Renyi's relative entropy exchange has been discussed as a manifestation of the BKL invariance principle in application to the FPKE in the DSH setting, similar to yet different  from the Kullback-Leibler relative entropy case of \cite{V_2023_AHP}.  
While our analysis pertains mainly to qualitative aspects of the convergence to equilibrium and does not consider the convergence rate issues (such as, for example, in \cite{HN_2004,T_2002}), the results of the paper (including the spectral gaps for the auxiliary quantum harmonic oscillator Hamiltonians) can be developed towards quantitative convergence rate estimates.

\section*{Acknowledgement}

Support from the Australian Research Council is gratefully acknowledged.

\appendix
\section{Common Morse indices for Hamiltonian and potential energy critical points}    
\label{sec:Morse}

For completeness of exposition, we also note in regard to (\ref{HVmin}), (\ref{HVargmin}) that the critical points of the  Hamiltonian $H$ in (\ref{HVT}) in the phase space $S \x \mR^n$  and the potential energy $V$ in the position space $S$ are in a one-to-one correspondence and share the Morse indices \cite{M_1963}. Indeed, by (\ref{T})--(\ref{H'}), the vector $x \in S\x \mR^n$ in (\ref{xqp}) is a critical point of $H$ (that is, $H'(x)=0$) if and only if $\nabla V(q)=0$ and $p=0$, in which case, the matrices 
\begin{align*}
    \d_q^2 T(x)
    & = 
    -\frac{1}{2}
    (p^\rT \d_{q_k} M_j(q) p)_{1\< j,k\< n},\\
    (\d_p\d_q H(x))^\rT 
    & = 
    \d_q\d_p H(x) = \d_q (M(q)^{-1}p)\\
    & = 
    -
    \begin{bmatrix}
      M_1(q)p  & \ldots & M_n(q)p 
    \end{bmatrix} 
\end{align*}
(where use is made of (\ref{Mk})) also vanish. Hence, the Hessian matrix 
\begin{align}
\nonumber
    H'' 
    & :=
    \d_x^2
    H
    =
    \begin{bmatrix}
        \d_q^2 H & \d_p\d_q H \\
        \d_q\d_p H & \d_p^2 H
    \end{bmatrix}\\
\label{H''}
    & = 
    \begin{bmatrix}
        V'' + \d_q^2 T & \d_p\d_q H \\
        \d_q\d_p H & M^{-1}
    \end{bmatrix}
\end{align}
of the Hamiltonian  reduces at any such point $x$  to a block-diagonal matrix 
$$
    H'' 
    = 
    \begin{bmatrix}
        V'' & 0\\
        0 & M^{-1}
    \end{bmatrix}. 
$$
The latter satisfies $\det H'' = \frac{\det V''}{\det M}$ (whereby $H$ inherits the Morse function property from $V$)  and has the same number of negative eigenvalues as $V''$ due to the positive definiteness of the mass matrix $M$.

\section{Expectation by parts formula}    
\label{sec:exparts}

The following lemma provides a variant of the  integration by parts under expectation, which is used in the proof of Theorem~\ref{th:gamdot}. 

\begin{lem}
\label{lem:parts}
Let $g \in C^1(S, (0,+\infty))$ be an everywhere  positive continuously differentiable PDF on the set $S$ described at the beginning of Section~\ref{sec:stopt}. Then for any map $A:= (a_{jk})_{1\< j \< r, 1\< k \< n} \in C^1(S, \mR^{r\x n})$ and a function $\varphi \in C^1(S, \mR)$ of bounded support,    
\begin{equation}
\label{exparts}
  \bE(A\nabla \varphi)
  =
  -\bE((\div A + A \nabla \ln g)\varphi), 
\end{equation}
where $\bE \psi := \int_S \psi(q) g(q)\rd q$ is the expectation of a Borel measurable function $\psi$ on $S$ over the PDF $g$, and $\div A := (\sum_{k=1}^n \nabla_k a_{jk} )_{1\< j \< r} \in C(S, \mR^r)$, with $\nabla_k(\cdot)$ the partial derivative with respect to the $k$th coordinate in $\mR^n$, $k = 1, \ldots, n$.   
\end{lem}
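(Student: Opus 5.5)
The plan is to prove (\ref{exparts}) componentwise, reducing it to the divergence theorem on $S$ for a compactly supported vector field. Writing out the $j$th entry of the left-hand side, I get
$$
    \bE\Big(\sum_{k=1}^n a_{jk}\nabla_k\varphi\Big)
    =
    \int_S \sum_{k=1}^n g\, a_{jk}\,\nabla_k \varphi \,\rd q .
$$
I then use the product rule in the form
$$
g\, a_{jk}\nabla_k\varphi = \nabla_k(g\, a_{jk}\varphi) - \varphi\,\nabla_k(g\, a_{jk}),
$$
together with $\nabla_k(g\, a_{jk}) = g(\nabla_k a_{jk} + a_{jk}\nabla_k \ln g)$. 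The logarithmic derivative is legitimate because $g>0$ and $g\in C^1$.

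Summing over $k$, the integrand splits into $\div_q$ of the $C^1$ vector field $w_j := (g\, a_{jk}\varphi)_{1\< k\< n}$, minus $g\varphi\big((\div A)_j + (A\nabla\ln g)_j\big)$. The second piece integrates to exactly the $j$th entry of $-\bE((\div A + A\nabla \ln g)\varphi)$. It therefore remains to show that $\int_S \div_q w_j\,\rd q = 0$.

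This vanishing is the only step needing care, namely the boundary terms. Since $\varphi$ has bounded support, so does $w_j$. For each Euclidean coordinate $q_k$ (with $k\< n-r$), I apply Fubini and integrate $\nabla_k w_{jk}$ over $\mR$ in $q_k$; this gives zero because $w_{jk}$ vanishes outside a bounded set. For each angular coordinate (with $k>n-r$), I integrate over $[0,2\pi)$; this gives $w_{jk}|_{q_k=2\pi}-w_{jk}|_{q_k=0}=0$ by $2\pi$-periodicity of $C^1$ functions on $\mT$. Here "bounded support" is understood with respect to the Euclidean coordinates, as stipulated in Section~\ref{sec:stopt}.

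Finally, I check integrability. All integrands are continuous and supported in a set that is compact in $S$, because the $\mT$-factor is compact. Hence the expectations are finite and Fubini applies. Assembling the $r$ rows then yields (\ref{exparts}).
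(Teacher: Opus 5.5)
Your proof is correct and follows the paper's argument: the paper writes the same Leibniz identity $\div(g\varphi A) = (A\nabla\varphi + (\div A + A\nabla\ln g)\varphi)g$ row-wise, integrates it over $S$, and uses the Ostrogradsky--Gauss theorem with the bounded support of $\varphi$ to eliminate the boundary term. Your coordinatewise Fubini argument (fundamental theorem of calculus in the Euclidean coordinates, $2\pi$-periodicity in the angular ones) makes that divergence-theorem step explicit on $\mR^{n-r}\x\mT^r$; the only blemish is that $r$ denotes both the number of rows of $A$ and the number of torus factors, a clash inherited from the paper.
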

\begin{proof}
A vector-matrix version of the Leibnitz rule yields a 
divergence identity 
\begin{align}
\nonumber
    \div(g\varphi A)
    & =
    g A \nabla \varphi    
    +
    g\varphi \div A    
    +
    \varphi A \nabla g\\
\nonumber
    & =
    (A \nabla \varphi    
    +
    \varphi \div A
    +
    \varphi 
    A \nabla \ln g)
    g\\
\label{divgphiA}
    & =
    (A \nabla \varphi    
    +
    (\div A
    +
    A \nabla \ln g)
    \varphi)
    g,     
\end{align}
where the logarithmic gradient $\frac{1}{g}\nabla g = \nabla \ln g$ is well defined since $g>0$. By integrating both sides of (\ref{divgphiA}) over the set $S$ and using the Ostrogradsky-Gauss divergence theorem along with the assumption that $\varphi$ is of bounded support, it follows that  
$
    \bE
    (A \nabla \varphi    
    +
    (\div A
    +
    A \nabla \ln g)
    \varphi)
    =0  
$. The latter is equivalent to (\ref{exparts}). 
\end{proof}

In the case of $S = \mR^n$, the ``expectation by parts'' formula (\ref{exparts}) remains valid if  the bounded support requirement for $\varphi$ is replaced with the growth constraint $g(q)\varphi(q) A(q) = o(|q|^{1-n})$, as $q\to \infty$, for the matrix field from the left-hand side of (\ref{divgphiA}). This relaxed condition still guarantees that the vector fields which  constitute the rows of $g\varphi A$ have an asymptotically vanishing flux through a sphere in $\mR^n$ as its radius goes to infinity. 

\section{Auxiliary quantum mechanical Hamiltonian}
\label{sec:ground}

For the purposes of the proof of Theorem~\ref{th:Kdot}, 
consider  a linear operator $\cH$   which is specified by constant matrices $\Xi, \mho \in \mP_n$  and maps a complex-valued function $\varphi \in C^2(\mR^n, \mC)$ to 
\begin{equation}
\label{cH}
    \cH(\varphi)(z) 
    := 
    \frac{1}{2}\|z\|_\Xi^2 
    \varphi(z)
    - 
    \frac{1}{2}
    \bra
        \mho^{-1},
        \d_z^2 \varphi(z)
    \ket_\rF,  
\end{equation}
where $z \in \mR^n$. 
This is a self-adjoint operator on  a dense domain  in $L^2(\mR^n, \mC)$ (in particular, $\cH$ is well defined on the Schwartz space \cite{V_2002} of rapidly decreasing functions on $\mR^n$)  which describes the Hamiltonian of a quantum harmonic oscillator with the potential energy function $z \mapsto \frac{1}{2} \|z\|_\Xi^2$ and the (nonrelativistic) kinetic energy operator $\varphi \mapsto -\frac{1}{2} \bra \mho^{-1}, \d_z^2 \varphi \ket_\rF$. The latter corresponds to the quantum mechanical momentum operator $-i\d_z(\cdot)$ over the vector $z$ of position variables in $\mR^n$ (with the reduced Planck constant $\hbar = 1$ as in the atomic unit system).   Accordingly, $\Xi$, $\mho$ in (\ref{cH}) play the role of the stiffness and mass matrices, respectively.  

Now, let $U \in \mR^{n\x n}$ be an orthogonal matrix whose columns are the eigenvectors of  a real positive definite  symmetric matrix $\mho^{-1/2} \Xi \mho^{-1/2} $ with eigenvalues $\omega_1, \ldots, \omega_n>0$, so that 
\begin{equation}
\label{diag}
    \mho^{-1/2} \Xi \mho^{-1/2} = U\Omega U^\rT,
    \qquad
    \Omega
    := 
    \diag_{1\< k \< n}
    (\omega_k) . 
\end{equation}
The coordinate transformation $z \mapsto U^\rT\sqrt{\mho} z=:\zeta$ in $\mR^n$, whereby $z = \mho^{-1/2}U\zeta$ and 
$$
    \varphi(z) = \phi(\zeta),
    \quad
    \d_z^2\varphi 
    = 
    \d_z\zeta^\rT  
    \d_\zeta^2\phi
    \d_z\zeta 
    =
    \sqrt{\mho} 
    U
    \d_\zeta^2\phi
    U^\rT 
    \sqrt{\mho} 
$$
(with $\d_z\zeta = U^\rT 
    \sqrt{\mho}$ the Jacobian matrix of the linear map $z\mapsto \zeta$), 
allows (\ref{cH}) to be represented in terms of the function $\phi$ as 
\begin{equation}
\label{sH}
    \cH(\varphi)(z)
    =
    \frac{1}{2}
    \|\zeta\|_\Omega^2 \phi(\zeta) 
    - 
    \frac{1}{2} 
    \Delta 
    \phi(\zeta)  
    =: 
    \sH(\phi)(\zeta).  
\end{equation}
Here, $\Delta:= \sum_{k=1}^n \d_{\zeta_k}^2(\cdot)$ is the Laplacian over the new variables $\zeta_1, \ldots, \zeta_n$ which comprise the vector $\zeta \in \mR^n$, and $\|z\|_\Xi^2 = \|\zeta\|_\Omega^2 = \sum_{k=1}^n \omega_k \zeta_k^2$ in view of (\ref{diag}). The Hamiltonian $\sH$ in (\ref{sH}) is known to have a purely discrete spectrum \cite{RS_1978} and a one-dimensional ground eigenspace spanned by the function 
\begin{equation}
\label{phi*}
    \phi(\zeta) 
    = 
    \re^{-\frac{1}{2} \|\zeta\|_{\sqrt{\Omega}}^2}
    =
    \prod_{k=1}^n
    \re^{-\frac{1}{2}\sqrt{\omega_k} \zeta_k^2}, 
\end{equation}
which is obtained from the univariate quantum harmonic oscillator \cite{S_1994} with $n=1$  by the separation of variables and associated with the lowest eigenvalue of the Hamiltonian (\ref{sH}): 
\begin{align}
\nonumber
    \lambda_{\min}(\cH)
    & =
    \lambda_{\min}(\sH) 
    = 
    \frac{1}{2}    
    \sum_{k=1}^n
    \sqrt{\omega_k}\\
\label{Heig}
    & =
    \frac{1}{2}
    \Tr \sqrt{\Omega}
    =
    \frac{1}{2}
    \Tr \sqrt{\Xi \mho^{-1}}.  
\end{align}
Here, use is also made of the isospectrality of the matrix  $\mho^{-1/2} \Xi \mho^{-1/2}$ to  $\Xi \mho^{-1} = \sqrt{\mho}\mho^{-1/2} \Xi \mho^{-1/2} \mho^{-1/2}$. The quadratic form in (\ref{phi*}) is expressed in terms of the original vector $z$ as 
\begin{align}
\nonumber   
    \|\zeta\|_{\sqrt{\Omega}}^2 
    = 
    \|U^\rT \sqrt{\mho}z\|_{\sqrt{\Omega}}^2 
    =
    \|z\|_{\sqrt{\mho} U\sqrt{\Omega} U^\rT \sqrt{\mho}}^2,  
\end{align}
where, in view of the orthogonality of the matrix $U$ in (\ref{diag}),  
\begin{align}
\nonumber
    \sqrt{\mho} U\sqrt{\Omega} U^\rT \sqrt{\mho} 
    & =
    \sqrt{\mho} 
    U
    \sqrt{U^\rT\mho^{-1/2} \Xi \mho^{-1/2} U} 
    U^\rT 
    \sqrt{\mho} \\
\nonumber   
    & = 
    \sqrt{\mho} 
    \sqrt{\mho^{-1/2} \Xi \mho^{-1/2}} 
    \sqrt{\mho}. 
\end{align}
Therefore, up to a normalisation  constant, $\phi^2$ from  (\ref{phi*}), as a function of $z \in \mR^n$, is a Gaussian PDF with zero mean  and the covariance matrix 
\begin{equation}
\label{C}
    C
    := 
    \frac{1}{2}
    \mho^{-1/2}
    \sqrt{\sqrt{\mho}\Xi^{-1} \sqrt{\mho}}\,  
    \mho^{-1/2}. 
\end{equation}
The spectral gap for the operator $\cH$ (between its lowest and the next eigenvalues)  is $\min_{1\< k \< n}\sqrt{\omega_k}$. 
In application to the operator $\cA = \cA_q$ in (\ref{cA}) from the proof of Theorem~\ref{th:Kdot}, where it   acts over the classical momentum variables $z:=p\in \mR^n$ at a fixed $q \in S$ (so that the matrices $M(q)$, $G(q)$ are constant), this operator has the form (\ref{cH}) with the following stiffness     and mass matrices:
\begin{equation}
\label{Ximho}
    \Xi:= \frac{1}{2} \beta^2 M^{-1}GM^{-1},
    \qquad
    \mho:= 
    \frac{1}{2} G^{-1}. 
\end{equation}
Accordingly, substitution of  (\ref{Ximho}) into (\ref{Heig}) yields the smallest eigenvalue for $\cA_q$: 
\begin{align}
\nonumber
    \lambda_{\min}(\cA_q)
    & =
        \frac{1}{2}
    \Tr \sqrt{\frac{1}{2} \beta^2 M^{-1}GM^{-1} \Big(\frac{1}{2} G^{-1}\Big)^{-1}}\\
\label{cAeig}
    & = 
        \frac{\beta}{2}
    \Tr \sqrt{M^{-1}GM^{-1} G }
    = 
        \frac{\beta}{2}
        \bra
            G, M^{-1}
        \ket_\rF, 
\end{align}
which coincides with (\ref{Lambda}), thus establishing (\ref{ground}). Furthermore, by  (\ref{Ximho}), the covariance matrix (\ref{C}) takes the form 
\begin{align}
\nonumber
    C
     = & 
    \frac{1}{2}
    \sqrt{2G}
    \sqrt{
        \sqrt{\frac{1}{2} G^{-1}}
        \Big(\frac{1}{2} \beta^2 M^{-1}GM^{-1}\Big)^{-1} 
        \sqrt{\frac{1}{2} G^{-1}}}
        \sqrt{2G}\\
\nonumber   
     = & 
     \cT
     \sqrt{G}\sqrt{
        G^{-1/2}
        MG^{-1}M 
        G^{-1/2}}
    \sqrt{G} = \cT M 
\end{align}
in view of (\ref{betau}) 
and thus coincides with the conditional covariance matrix of the momentum according to the invariant Gaussian PDF $h_*$  in (\ref{h*}). 

\begin{thebibliography}{99}
\bibitem{A_1989}
V.I.Arnold, \emph{Mathematical Methods of Classical Mechanics}, 2nd Ed., Springer, New York, 1989.



\bibitem{BK_1952}
E.A.Barbashin,  and N.N.Krasovskii, On the stability of motion in the large, \textit{Dokl. Akad. Nauk SSSR}, vol.  86, no. 6, 1952, pp. 453--456.

\bibitem{BKRS_2015}
V.I.Bogachev, N.V.Krylov, M.R\"{o}ckner, and S.V.Sha\-posh\-ni\-kov,
\textit{Fok\-ker–Planck–Kolmogorov Equations}, 
American Mathematical Society,
Providence, Rhode Island, 2015.

\bibitem{CT_2006}
T.M.Cover, and J.A.Thomas, {\it Elements of Information Theory},   Wiley, Hoboken, New Jersey, 2006.

\bibitem{D_1984}
J.L.Doob, 
\textit{Classical Potential Theory and Its Probabilistic Counterpart}, 
Springer, New York, 1984. 

\bibitem{E_2008}
L.C.Evans, {\it Partial Differential Equations}, American Mathematical Society, Providence, Rhode Island, 2008.

\bibitem{HN_2004}
F.H'erau, and F.Nier, Isotropic hypoellipticity and trend to equilibrium for the
Fokker-Planck equation with a high-degree potential, \emph{ Arch. Ration. Mech. Anal.}, vol.  171, no. 2, 2004, pp. 151--218.

\bibitem{H_1986}
Wm.G.Hoover,
\emph{Molecular Dynamics},
Springer,  Berlin, 1986.


\bibitem{H_1967}
L.H\"{o}rmander, Hypoelliptic second order differential equations, \textit{ Acta Math.}, vol.  119, 1967, pp.  147--171.


\bibitem{HJ_2007}
 R.A.Horn, and C.R.Johnson, {\it Matrix Analysis}, Cambridge
University Press, New York, 2007.



\bibitem{I_2019}
M.Ilie,
Fluid-structure interaction in turbulent flows; a CFD based aeroelastic algorithm using LES,
\emph{Appl. Math. Comp.}, vol. 342, 2019, pp.  309--321.

\bibitem{KMPV_2025}
D.Kalise, L.M.Moschen, 
G.A.Pavliotis, and U.Vaes, 
A spectral approach to optimal control of the
Fokker–Planck equation
\textit{IEEE Control Syst. Lett.}, vol. 9, 2025, pp. 504--509. 

\bibitem{KS_1991}
I.Karatzas, and S.E.Shreve,
\emph{Brownian Motion and Stochastic Calculus}, 2nd Ed.,
Springer, New York, 1991.


\bibitem{KGV_1983}
S.Kirkpatrick, C.D.Gelatt, and  M.P.Vecchi, 
Optimization by simulated annealing, 
\textit{Science, New Series}, vol. 220, no. 4598, 1983, pp. 671--680.

\bibitem{K_1966}
R.Kubo,
The fluctuation-dissipation theorem,
\emph{Rep. Prog. Phys.}, vol. 29, no. 1,  1966, pp. 255--284.



\bibitem{LL_1969}
L.D.Landau, and E.M.Lifshitz, 
\textit{Statistical Physics}, 2nd Ed., 
Pergamon Press, Oxford, 1969. 

\bibitem{L_1960}
J.P.LaSalle, Some extensions of Liapunov's second method, \textit{IRE Trans. Circuit Theory}, vol. 7, 1960, pp. 520--527.


\bibitem{LPS_1996}
G.A.Leonov, D.V.Ponomarenko, and V.B.Smirnova, \textit{Frequency-Domain Methods for Nonlinear Analysis},  World Scientific, Singapore, 1996. 


\bibitem{ME_1981}
N.F.G.Martin, and J.W.England, \textit{Mathematical Theory of Entropy},
Addison-Wesley, Reading, Massachusetts, 1981.




\bibitem{M_1963} 
J.Milnor, \textit{Morse Theory},
Princeton University Press,
Princeton, New Jersey,
1963.


\bibitem{OPU_2012}
H.Ouyang, I.R.Petersen, and V.Ugrinovskii, Lagrange stabilization of pendulum-like systems: a pseudo  $H_\infty$ control approach, \emph{IEEE Trans. Autom.
Control}, vol. 57, no. 3, 2012, pp. 649--662. 


\bibitem{LP_2008}
A.Lanzon,  and I.R.Petersen, Stability robustness of a feedback interconnection of systems with negative imaginary frequency response, \textit{IEEE Trans. Autom.
Contr.}, vol. 53, no. 4, pp. 1042--1046. 

\bibitem{P_2016}
I.R.Petersen, 
Negative imaginary systems theory and applications, 
\textit{Annu. Rev. Control}, 
vol. 42, 2016,
Pages 309--318. 

\bibitem{P_1964}
B.T.Polyak, Some methods of speeding up the convergence of iteration methods,
\emph{USSR Comp. Math. Math. Phys.}, vol. 4, no. 5, 1964, pp.  1--17.

\bibitem{PW_2025}
Y.Polyanskiy, and Y.Wu, 
\textit{Information Theory}, Cambridge University Press, 2025. 

\bibitem{RS_1978}
M.Reed,  and B.Simon, \textit{Methods of Modern Mathematical Physics. IV: Analysis of Operators},
Academic Press, New York, 1978.

\bibitem{R_1961}
A.Renyi, On measures of entropy and information, Proc. 4th Berkeley Sympos. Math. Statist. Prob., I, 1961, pp. 547--561.


\bibitem{R_1996}
H.Risken,
\emph{The Fokker-Planck Equation: Methods of Solution and Applications}, 2nd Ed., Springer, Berlin, 1996. 
\bibitem{R_1974}
F.N.H.Robinson,
\emph{Noise and Fluctuations in Electronic Devices and Circuits},
Clarendon Press, Oxford, 1974.


\bibitem{S_1994}
J.J.Sakurai, 
\textit{Modern Quantum Mechanics},
 Addison-Wesley, Reading, Mass., 1994.

\bibitem{S_1996}
A.N.Shiryaev, {\it Probability}, 2nd Ed., Springer, New York, 1996.

\bibitem{Soize_1994}
C.Soize, \textit{The Fokker-Planck Equation for Stochastic Dynamical Systems
and Its Explicit Steady State Solutions}, Series on Advances in Mathematics for
Applied Sciences, vol.  17, World Scientific, Singapore, 1994. 
 
\bibitem{S_2008}
D.W.Stroock,
\textit{Partial Differential Equations for Probabilists},
Cambridge University Press, Cambridge, 2008.

\bibitem{T_2002}
D.Talay,
Stochastic Hamiltonian systems:
exponential convergence to the
invariant measure, and discretization
by the implicit Euler scheme, \emph{Markov Processes Relat. Fields}. vol.  8, 2002, pp. 1--36.

\bibitem{T_2010}
M.E.Tuckerman, \emph{Statistical Mechanics: Theory and
Molecular Simulation}, Oxford University Press, New York, 2010.



\bibitem{UPS_2022}
V.Ugrinovskii, I.R.Petersen,  and I.Shames, Global convergence and asymptotic optimality of the heavy ball method for a class of nonconvex optimization problems, \textit{IEEE Control Syst. Lett.}, vol. 6, 2022, pp. 2449--2454. 

\bibitem{VJ_2014}
A. van der Schaft,  and D.Jeltsema, \textit{Port-Hamiltonian Systems Theory: An Introductory
Overview},  Foundations and Trends in Systems and Control, vol. 1, no. 2--3,
pp. 173--378, 2014.

\bibitem{V_2002}
V.S.Vladimirov, {\it Methods of the Theory of Generalized Functions}, CRC Press, London, 2002. 


\bibitem{VP_2018_ANZCC}
I.G.Vladimirov,  and I.R.Petersen, Dissipative linear stochastic Hamiltonian systems, 2018 Australian \& New Zealand Control Conference (ANZCC), Melbourne, VIC, Australia, 2018, pp. 227--232. 

\bibitem{VP_2020_ANZCC}
I.G.Vladimirov,  and I.R.Petersen, Mean square optimal control by interconnection for linear stochastic Hamiltonian systems, 2020 Australian and New Zealand Control Conference (ANZCC), Gold Coast, QLD, Australia, 2020, pp. 24--29. 



\bibitem{V_2023_AHP}
I.G.Vladimirov, 
Position-momentum conditioning, relative entropy decomposition and convergence to equilibrium in stochastic Hamiltonian systems, preprint: {\tt 		arXiv:2312.09475 [math-ph]}, 15 December 2023.




\bibitem{WPUS_2024}
A.X.Wu, I.R.Petersen, V.Ugrinovskii, and I.Shames, A generalized accelerated gradient optimization method,  2024 American Control Conference (ACC), Toronto, Ontario, Canada, 2024, pp. 1904--1908. 


\bibitem{Z_2001}
R.Zwanzig, \textit{Nonequilibrium Statistical Mechanics}, Oxford University Press, New
York, 2001.


\end{thebibliography}
\end{document}